\newtheorem{proposition}{Proposition}[section]
\newtheorem{lemma}[proposition]{Lemma}
\newtheorem{theorem}[proposition]{Theorem}
\newtheorem{corollary}[proposition]{Corollary}
\newtheorem{conjecture}{Conjecture}[section]
\theoremstyle{definition}
\newtheorem{remark}[proposition]{Remark}
\newtheorem{definition}[proposition]{Definition}
\newtheorem{example}[proposition]{Example}
\DeclareMathOperator{\Id}{Id}
\DeclareMathOperator{\Aut}{Aut}
\newcommand{\w}{\omega} 
\newcommand{\C}{\mathbb{C}} 
\newcommand{\R}{\mathbb{R}} 
\newcommand{\proj}{\mathbb{P}} 
\newcommand{\Z}{\mathbb{Z}}
\newcommand{\Q}{\mathbb{Q}}
\newcommand{\Fuk}{\mathcal{F}uk}
\newcommand{\Cob}{\mathcal{C}ob}
\newcommand{\flux}{\text{Flux}}
\newcommand{\D}{\mathcal{D}}
\newcommand{\Gr}{\text{Griff}}
\newcommand{\Symp}{Symp}
\newcommand{\ham}{Ham}
\newcommand{\algcob}{algCob}
\newcommand{\longrightsquigarrow}{\xymatrix{{}\ar@{~>}[r]&{}}}
\title{Algebraic Lagrangian cobordisms, flux and the Lagrangian Ceresa cycle }
\author{Alexia Corradini}
\email{ac2478@cam.ac.uk}
\numberwithin{equation}{section}
\begin{document}

\begin{abstract}
	We introduce an equivalence relation for Lagrangians in a symplectic manifold known as \textit{algebraic Lagrangian cobordism}, which is meant to mirror algebraic equivalence of cycles. From this we prove a symplectic, mirror-symmetric analogue of the statement \enquote{the Ceresa cycle is non-torsion in the Griffiths group of the Jacobian of a generic genus $3$ curve}. Namely, we show that for a family of tropical curves, the \textit{Lagrangian Ceresa cycle}, which is the Lagrangian lift of their tropical Ceresa cycle to the corresponding Lagrangian torus fibration, is non-torsion in its oriented algebraic Lagrangian cobordism group. We proceed by developing the notions of tropical (resp. symplectic) flux, which are morphisms from the tropical Griffiths (resp. algebraic Lagrangian cobordism) groups. 
\end{abstract}
	
\maketitle

\section{Introduction}\label{introduction}

In a smooth projective variety, studying and comparing the different equivalence relations between algebraic cycles can shed light on a wealth of interesting geometric phenonema. A chief example is that of the Ceresa cycle of a curve, which is a $1$-cycle living in its Jacobian, and provided one of the first instances of a cycle which was shown to be homologically but not algebraically trivial \cite{Ceresa}. This discrepancy between homological and algebraic equivalence is encoded in the \textit{Griffiths group} of the variety. Our aim is to propose a symplectic counterpart to this group and use it to exhibit, through an $SYZ$ picture, a symplectic manifestation of the fact that the Ceresa cycle is not algebraically trivial.

We survey below how the (cylindrical) Lagrangian cobordism group of a symplectic manifold $(M,\w)$ is known to be closely related to the Chow groups of its mirror. However, a candidate mirror to its Griffiths groups (that is, the quotient of the Chow groups by algebraic equivalence) has, to the best of our knowledge, not yet appeared in the literature. We propose such a mirror by introducing an equivalence relation which we call \textit{algebraic Lagrangian cobordism} on the set of suitable Lagrangians in a symplectic manifold, and from this we define the \textit{algebraic Lagrangian cobordism group} of $(M,\w)$, $\algcob(M,\w)$. 

While rational equivalence is defined in terms of flat families of cycles over $\proj^1$, algebraic equivalence is defined in terms of flat families of cycles over curves of \textit{arbitrary} genus. These are known to have vastly more complicated mirrors, making a translation into symplectic geometry more subtle. The insight we use to circumvent this difficulty is that higher genus curves can be embedded in their Jacobian torus, which are abelian varieties. These have SYZ mirrors which we call \textit{polarised symplectic tori}. From here, we define algebraic Lagrangian cobordisms in terms of \enquote{Lagrangian families} over a polarised symplectic torus. More explicitly, an algebraic Lagrangian cobordism between two Lagrangians $L$ and $L'$ in a symplectic manifold $(M,\w)$ is a Lagrangian correspondence $\Gamma\subset (X(B)\times M, -\w_{X(B)}\oplus\w)$ such that $\Gamma(F_p)=L$ and $\Gamma(F_q)=L'$, where $(X(B),\w_{X(B)})$ is a symplectic polarised torus and $F_p,F_q\subset X(B)$ are two Lagrangian tori (Definition \ref{alg_lag_cob}). The algebraic Lagrangian cobordism group of $(M,\w)$, $\algcob(M,\w)$, is then defined as the quotient of integer combinations of Lagrangians by the equivalence relation generated by algebraic Lagrangian cobordism.

The predicted relation between the Lagrangian cobordism group of a symplectic manifold $M$, denoted $\Cob(M,\w)$, and the Chow groups of its mirror $Y$, was first brought to light in the context of SYZ fibrations by Sheridan--Smith in \cite{nick_ivan1}. Their constructions rely on the following observation. Let $B$ be a tropical torus, $Z_1$ and $Z_2$ be tropical cycles in $B$, and consider the Lagrangian torus fibration  $$X(B):=T^*B/T^*_\Z B\longrightarrow B.$$ A rational equivalence between $Z_1$ and $Z_2$, if it admits a Lagrangian lift to $X(B\times\R)\cong X(B)\times\C^*$, defines a (cylindrical) Lagrangian cobordism between the Lagrangian lifts $L_1$ and $L_2$ of $Z_1$ and $Z_2$.

From the perspective of homological mirror symmetry, this correspondence can be formulated as follows. An equivalence of triangulated categories $$\D^\pi\Fuk(M)\cong D^bCoh(Y)$$ would in particular imply an isomorphism $$K_0(D^\pi\Fuk(M))\cong K_0(D^bCoh(Y))$$ between their Grothendieck groups. On the right-hand side, assuming $Y$ is smooth projective, it is known that $$K_0(D^bCoh(Y))_\Q\cong K_0(Coh(Y))_\Q\cong CH_*(Y)_\Q,$$ where $CH_*(Y)$ is the Chow ring of $Y$ and the second isomorphism is given by the Chern character map $ch$; see \cite[Section 2.3]{gillet_Ktheory_intersection_theory}. On the left-hand side, Biran$-$Cornea have shown in \cite{bc1,Biran_Cornea_2} that $K_0(D\Fuk(M))$ is closely related to (the appropriate flavour of) the Lagrangian cobordism group $\Cob(M)$, which we define in section \ref{symplectic_flux}. More precisely, Lagrangian cobordisms yield triangle decompositions in the Fukaya category, therefore there is a surjective morphism $$\Cob(M)\longrightarrow K_0(\Fuk(M)).$$ In some cases, we know this to be an isomorphism \cite{haug_T2,alvaro_bielliptic,unob_lag_cob_gps_surfaces}, and it is interesting to note that the first two of these examples rely on a proof of homological mirror symmetry hence on understanding the mirror Chow groups. Purely symplectically, this line of questions amounts to uncovering which triangle decompositions in $\Fuk(M)$ can be realised geometrically by Lagrangian cobordisms.

For symplectic topologists, one of the motivations for studying the groups $\Cob(M)$ associated to a symplectic manifold $M$ is that Lagrangian cobordism provides a coarser equivalence relation than Hamiltonian isotopy (see Example \ref{suspension_cobordism}). Therefore understanding $\Cob(M)$ ought to be simpler than the famously hard problem of classifying Lagrangians in $(M,\w)$ up to Hamiltonian isotopy. Nevertheless, currently there are only few instances in which these cobordism groups are fully understood. In dimension two, Arnold computed $\Cob(T^*S^1)$ using flux arguments \cite{Arnold_Lag_Leg_cobordisms} (see Section \ref{symplectic_flux}), then Haug used similar techniques to compute $\Cob(T^2)$ \cite{haug_T2}. Rathael-Fournier computed Lagrangian cobordisms groups for higher-genus surfaces through a different approach \cite{unob_lag_cob_gps_surfaces}, namely using the action of the mapping class group to obtain generators following work of Abouzaid \cite{abouzaid_fukaya_cat_higher_genus}. The first computation for compact symplectic manifolds in dimension four was carried out by Muñiz-Brea in \cite{alvaro_bielliptic} for symplectic bielliptic surfaces, restricting to tropical Lagrangians. In the non-compact case, Bosshard studies $\Cob(M)$ when $M$ is a Liouville manifold, and computes these for non-compact Riemann surfaces of finite type \cite{bosshard_Lag_cob_Liouville}. Here again, we have omitted information about the exact flavour of cobordism group considered in each of these cases.

In light of the observations above, it is natural to wonder whether one could further our understanding of Lagrangian cobordism groups by translating interesting facts about Chow groups to the symplectic realm. More ambitiously, after having introduced algebraic Lagrangian cobordisms as an equivalence relation on Lagrangians \enquote{mirror} to algebraic equivalence of cycles, one could wonder whether information about Griffiths groups can also be accessed symplectically. With these objectives in mind, the story of the Ceresa cycle is a natural candidate to turn one's attention to.

Our main Theorem builds upon a \textit{tropical} version of the Ceresa cycle story, worked out by Zharkov in \cite{zharkov_tropical_ceresa}. To a certain type of genus $3$ tropical curve $C$, we associate a nullhomologous Lagrangian $3$-manifold $L_C-L_C^-$ called the \textit{Lagrangian Ceresa cycle} of $C$. Its ambient symplectic manifold is a symplectic $6$-torus $X(J(C))$, also built out of $C$. We prove the following:
\begin{theorem}\label{intro_main_theorem}
	For generic tropical curves $C$ of a certain type, the Lagrangian Ceresa cycle $L_C-L_C^-$ has infinite order in $\algcob^{or}(X(J(C)))$. 
\end{theorem}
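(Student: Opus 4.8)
The plan is to detect the infinite order of $L_C - L_C^-$ by constructing a \emph{symplectic flux} homomorphism out of $\algcob^{or}(X(J(C)))$ and showing that it carries the Lagrangian Ceresa cycle to a non-torsion element. This mirrors both Griffiths' original Hodge-theoretic detection of the Ceresa cycle and Zharkov's tropical computation, and it follows the template of Arnold and Haug, who computed low-dimensional cobordism groups by flux arguments. Since any group homomorphism sends torsion to torsion, exhibiting a flux invariant that is non-torsion on $L_C - L_C^-$ immediately forces $L_C - L_C^-$ to have infinite order.

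First I would define the symplectic flux as a map $\Phi$ assigning to a formal difference of Lagrangians in $X(J(C))$ a class in a real torus $G = V/\Lambda$, where $V$ is built from the periods of $\w$ (an Abel--Jacobi-type receptacle) and $\Lambda$ is the lattice coming from the polarisation. The crucial point is that $\Phi$ must descend to the quotient by algebraic Lagrangian cobordism: given $\Gamma \subset (X(B)\times X(J(C)), -\w_{X(B)}\oplus\w)$ with $\Gamma(F_p)=L$ and $\Gamma(F_q)=L'$, the fact that $\Gamma$ is isotropic means that transporting $L$ to $L'$ along a path from $p$ to $q$ in $X(B)$ changes the raw flux by the integral of $\w_{X(B)}$ over the corresponding cylinder swept out inside $\Gamma$. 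Because $F_p$ and $F_q$ are Lagrangian tori in the polarised symplectic torus, this change should lie in the lattice $\Lambda$, so that $\Phi(L)=\Phi(L')$ holds in $G$. Establishing this invariance over an \emph{arbitrary} polarised symplectic torus base, rather than over the cylinder $\C^*$ appearing in cylindrical cobordism theory, is the technical heart of the construction.

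Next I would compute $\Phi(L_C - L_C^-)$ by passing through the SYZ/tropicalisation correspondence. Because $L_C$ and $L_C^-$ are the Lagrangian lifts of Zharkov's tropical Ceresa cycle and its reflection, I expect the symplectic flux to match a \emph{tropical flux} invariant defined directly on the tropical Griffiths group, once the two receptacles are explicitly identified. Concretely, I would first compute the tropical flux of the tropical Ceresa cycle — reproducing or invoking Zharkov's result that it is non-torsion for generic $C$ — and then prove a compatibility statement asserting that the Lagrangian-lift operation intertwines tropical and symplectic flux. Combining these yields that $\Phi(L_C - L_C^-)$ is non-torsion, hence so is $L_C - L_C^-$ in $\algcob^{or}(X(J(C)))$.

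The main obstacle I anticipate is the well-definedness established in the second paragraph: showing that the flux is genuinely invariant under algebraic Lagrangian cobordisms parametrised by a whole polarised torus rather than a single rational curve. This is precisely the symplectic shadow of why algebraic equivalence is subtler than rational equivalence, and it requires tightly controlling how $\w$ integrates over families of Lagrangians as the base point moves through $X(B)$, together with a careful accounting of the lattice $\Lambda$ so that the quantised ambiguity matches the polarisation. A secondary difficulty is the genericity hypothesis: as in the classical and tropical settings, the flux of the Ceresa cycle is expected to degenerate for special (hyperelliptic-type) curves, so the argument must be arranged to produce an open dense locus of $C$ on which the tropical flux, and hence $\Phi(L_C - L_C^-)$, is non-torsion.
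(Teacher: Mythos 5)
Your overall architecture coincides with the paper's (a flux morphism out of $\algcob^{or}(X(J(C)))$, a tropical--symplectic comparison under Lagrangian lifting, and Zharkov's computation as input), but the mechanism you propose for well-definedness is wrong, and this is exactly the step where the paper has to work. You take a receptacle $G=V/\Lambda$ built from the \emph{periods of $\w$} and argue that moving from $F_p$ to $F_q$ changes the raw flux by $\int\w_{X(B)}$ over a cylinder swept inside $\Gamma$, which ``should lie in the lattice $\Lambda$.'' This fails: the diagonal $\Delta\subset\overline{X(B)}\times X(B)$ is an algebraic Lagrangian cobordism between \emph{any} two fibres $F_b$ and $F_{b'}$, while the ordinary $\w$-flux between distinct fibres varies continuously in $b'$ and is generically not a period of $\w$. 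Hence no homomorphism out of $\algcob^{or}$ can be manufactured from periods of $\w$ alone; any such $\Phi$ would wrongly obstruct the diagonal cobordism. The flux that actually descends (Theorem \ref{algcob_flux_theorem}) is $\int_\gamma\w^2\wedge\beta$ for closed $\beta\in\Omega^{n-3}(M)$, and the proof hinges on a point your sketch misses: after closing up with $C_1=\Gamma\cap\pi_B^{-1}(P([0,1]))$ (on which the difference form vanishes since $\Gamma$ is Lagrangian) and a parallel-transport chain $C_2$ over the path, the $B$-component of $TC_2$ is \emph{one-dimensional}, so $\bigl(\pi_M^*\w-\pi_{X(B)}^*\w_{X(B)}\bigr)^2$ vanishes on $C_2$ even though the $2$-form itself does not; contractibility of the concatenated loop $\overline{P}\star P$ then reduces the integral to periods of $\w^2\wedge\beta$ in $M$. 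Squaring the symplectic form is precisely what separates algebraic Lagrangian cobordism over a torus base from cylindrical cobordism over $\C^*$, where Lemma \ref{cob_implies_periods} applies to all fluxes $\alpha\wedge\w$.

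Two smaller points. First, for the computation you would need the comparison at the level of chains, not just a matching of receptacles: the paper proves a chain-level pairing identity (Proposition \ref{pairing_preserved}) and identifies $\hat{\psi}(\Omega^3_{2,1})=\tfrac{1}{2}\w^2$, so that Zharkov's $\int_{\gamma_0}\Omega^3_{2,1}=-ad$ becomes $\tfrac{1}{2}\int_{\hat{\phi}(\gamma_0)}\w^2$. Second, $\hat{\phi}(\gamma_0)$ bounds the piecewise-linear lift $\hat{\phi}(\overline{C})-\hat{\phi}(\overline{C^-})$, not the smooth Lagrangian Ceresa cycle: the paper bridges this with $4$-chains swept by Matessi's family of Lagrangian pairs-of-pants converging to the PL lift, on which $\w^2$ vanishes because a one-parameter family of Lagrangians is $\w^2$-isotropic. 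Without this step the value $-ad$ is only attached to the PL model. Finally, genericity in the paper is not an open-dense argument over a hyperelliptic-type locus but the elementary check that $-ad$ is non-torsion modulo the explicit period lattice of $\w^2$ (integer combinations of minors of the polarisation matrix $Q$) for generic edge lengths.
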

Here $\algcob^{or}(X(J(C)))$ is the oriented algebraic Lagrangian cobordism group of $X(J(C))$.
 
Our constructions for $L_C-L_C^-$ as well as $X(J(C))$ build upon those of Zharkov, as does the proof of \ref{intro_main_theorem}. Therefore we begin with an outline of the tropical story. More details, as well as the relevant background in tropical geometry can be found in Section \ref{tropical_background}. 

In \cite{zharkov_tropical_ceresa}, Zharkov considers a genus $3$ tropical curve $C$ \textit{of type $K4$}, represented below in Figure \ref{intro_K4}. Its data is that of the underlying graph, and the choice of real positive lengths $a,b,c,d,e,f$ for the edges $A,B,C,D,E,F$. 

\begin{figure}[htb]
	\centering
	\scalebox{0.6}{
	\begin{tikzpicture}
		\pgfdeclarelayer{nodelayer} 
		\pgfdeclarelayer{edgelayer}
		\pgfsetlayers{main,nodelayer,edgelayer}
		\begin{pgfonlayer}{nodelayer}
			\node [fill,circle,label={above right:$p$}] (0) at (0, 0) {};
			\node [fill,circle,color=red] (1) at (0, 3.75) {};
			\node [fill,circle,color=lime] (2) at (-3.5, -2) {};
			\node [fill,circle,color=gray] (3) at (3.5, -2) {};
			
			\node[label={left: E}] (4) at (0.1,1.875) {};
			\node[label={above:F}] (5) at (-1.75,-1.7) {};
			\node[label={above:D}] (6)  at (1.75,-1.7) {};
			\node[label={below left:A}] (7) at (-3.0,1.75) {};
			\node[label={below left:C}] (8) at (3.8,1.75) {};
			\node[label={above:B}] (9) at (0,-4.15) {};
		\end{pgfonlayer}
		\begin{pgfonlayer}{edgelayer}
			\draw[very thick,color=magenta] (1.center) to (0.center);
			\draw[very thick,color=green] (0.center) to (2.center);
			\draw[very thick,color=violet](0.center) to (3.center);
			\draw [very thick, color=cyan,bend right=45] (1.center) to (2.center);
			\draw [very thick, color=blue, bend left=45] (1.center) to (3.center);
			\draw [very thick, color=orange,bend right=45] (2.center) to (3.center);
		\end{pgfonlayer}
	\end{tikzpicture}}
	\caption{A tropical curve of type $K4$.}
	\label{intro_K4}
\end{figure}

 After an arbitrary choice of basepoint, $C$ can be embedded in its \textit{tropical Jacobian} $J(C)$, which in this case is a $3$-dimensional tropical torus. The image of this embedding is represented in Figure \ref{K4_in_Jacobian}. This Jacobian has a natural group structure, therefore an involution by taking inverses for the group law. The \textit{Ceresa cycle} of $C$ is a nullhomologous $1$-cycle in $J(C)$: it is the difference between the image of the curve in its Jacobian, and its image composed with this involution. We write it as $C-C^-\in \Gr^{trop}_1(J(C))$, where $\Gr^{trop}_1(J(C))$ denotes the $1$-dimensional tropical Griffiths group of $J(C)$. Zharkov proves the following:

\begin{theorem}[\cite{zharkov_tropical_ceresa,Ceresa}]\label{intro_Ceresa_thm}
	For a generic genus $3$ tropical curve of type $K4$, $C-C^-$ is not trivial in $\Gr^{trop}_1(J(C))$.
\end{theorem}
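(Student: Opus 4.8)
The plan is to detect the class of $C-C^-$ by constructing an explicit homomorphism out of $\Gr^{trop}_1(J(C))$ of \enquote{flux} type and evaluating it on the Ceresa cycle. First I would fix a concrete model of the Jacobian, writing $J(C)=\R^3/\Lambda$ where $\Lambda$ and the polarising form are read off from the length pairing on the cycle space of the $K4$ graph, and expressing the tropical Abel--Jacobi embedding of $C$ as an explicit piecewise-linear graph in the coordinates $a,\dots,f$. Since $C^-$ is the image of this embedded graph under $x\mapsto -x$, and reflection acts on the tropical $(1,1)$-homology of the torus (where the class of a $1$-dimensional tropical cycle lives) by $(-1)^{1+1}=+1$, one gets $[C]=[C^-]$; this is the check that $C-C^-$ is homologically trivial, so that it genuinely defines a class in $\Gr^{trop}_1(J(C))$.

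Next I would build the detecting invariant. As $C-C^-$ is a homologically trivial tropical $1$-cycle it bounds a tropical $2$-chain $\Gamma$, and I would pair $\Gamma$ against the translation-invariant tropical forms coming from the polarisation, obtaining a number well defined modulo the periods produced by altering $\Gamma$ by a tropical $2$-cycle. This yields a candidate flux $\Phi$ from homologically trivial $1$-cycles into a real torus, and I would verify additivity together with independence of the chosen bounding chain, so that $\Phi$ is a genuine homomorphism.

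The crux is to show that $\Phi$ descends to the Griffiths group, i.e. that it vanishes on algebraically trivial cycles. By definition such a cycle is a difference of two fibres of a tropical family over a connected base, and I would argue that moving the parameter along the base changes $\Phi$ only by a period, so the net flux of the family is zero in the target torus. This is the tropical shadow of the fact that the classical Abel--Jacobi invariant is locally constant along a connected algebraic family; pinning down the correct period lattice, together with the admissible notion of family, is where I expect the real difficulty to lie, and it is also the step deciding whether $\Phi$ records only the homology class or genuinely the Griffiths class (the analogue of projecting away the algebraic part of the intermediate Jacobian in Ceresa's Hodge-theoretic argument).

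Finally I would compute $\Phi(C-C^-)$ explicitly. The reflection symmetry makes the bounding chain and the pairing combinatorially tractable, and the output is an explicit function of the six edge lengths. It then remains to check that this expression is not identically zero and does not land in the period lattice, so that for $(a,\dots,f)$ outside a proper closed subset it is nonzero, and hence $C-C^-$ is non-trivial in $\Gr^{trop}_1(J(C))$; generic irrationality of the value would moreover force infinite order, which is what the main theorem ultimately requires. This last genericity step is exactly the hypothesis in the statement.
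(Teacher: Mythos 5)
Your overall skeleton matches the paper's route: both define a flux-type homomorphism on homologically trivial tropical $1$-cycles by integrating an invariant form over a bounding $2$-chain, take values modulo a period lattice, and finish by evaluating on $C-C^-$ (the paper gets $-ad$, generically not a period, which also gives the infinite-order statement). Your homological-triviality check via $(-1)$ acting as $(+1)$ on $H_1(B;T_\Z B)$ is fine. But there is a genuine gap at exactly the step you flag as the crux: the descent of $\Phi$ to $\Gr^{trop}_1(J(C))$. Your proposed mechanism --- that moving the parameter along the base of an algebraic family changes the flux ``only by a period,'' so that local constancy in $\R/\mathcal{P}$ forces vanishing --- fails for ordinary translation-invariant forms. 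Translating a $1$-cycle along an integral direction \emph{is} an algebraic equivalence (over a tropical elliptic curve, say), and the ordinary flux swept out varies \emph{continuously} and generically nonzero with the translation parameter; it is not locally constant and does not land in a fixed discrete lattice. This is the same phenomenon as Remark \ref{no_trop_flux_0_cycles}, and it is why all fibres of $X(B)$, though pairwise of distinct ordinary flux, are algebraically equivalent. So a flux built from an ordinary invariant $2$-form simply records nothing about the Griffiths class, and no choice of period lattice rescues the local-constancy heuristic.

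What actually makes the paper's proof work, and what your proposal is missing, is the passage to tropical (co)homology with framed coefficients. The cycle $C-C^-$ is promoted to its tautological cycle $\overline{C}-\overline{C^-}\in C_1(J(C);T_\Z J(C))$, an algebraic equivalence $W\subset B\times C'$ is converted into a framed $2$-chain $\tilde W\in C_2(B;T_\Z B)$ whose coefficient on each cell is the pullback of the tautological framing of the fibres (Section \ref{construction_tropical_chain_equivalence}), and the form being integrated is the determinantal form $\Omega^3_{2,1}\in C^2(B;T^*_\Z B)$, not an ordinary form. The key point is Proposition \ref{pre_determinantal_form_vanishes}: $\Omega^3_{2,1}$ vanishes whenever the vectors it is fed are linearly dependent; since the framing of $\tilde W$ is by construction tangent to the cells it decorates, the form vanishes \emph{pointwise} on $\tilde W$ (Corollary \ref{determinantal_form_vanishes}), which is the real reason algebraically trivial cycles are killed --- no period-jump or continuity argument is involved. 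Your phrase ``translation-invariant tropical forms coming from the polarisation'' gestures in this direction (the periods of $\Omega^3_{2,1}$ are minors of the polarisation matrix $Q$), but without the framed coefficients and the degeneracy property of the determinantal form, your step three cannot be completed, and this is precisely the step separating the Griffiths class from the mere homology class. The final computation also leans on this structure: Zharkov's chain $\gamma_0$ consists of five framed parallelograms, and on four of them the framing is tangent to the cell, so only one contributes, yielding $-ad$.
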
 
This is a tropical version of a famous Theorem of Ceresa \cite{Ceresa}. 

The word \textit{generic} here refers to the choice of edge lengths of $C$, which are $6$ positive real numbers, being generic. The proof of Theorem \ref{intro_Ceresa_thm} is sketched as follows. Zharkov shows that if two $1$-cycles in $J(C)$ are algebraically equivalent (in particular they are homologous), then certain integrals over $2$-chains realising their homological equivalence are constrained. More precisely, he defines a \textit{determinantal form} $\Omega_0$: this is a $2$-form with coefficients encoding information about the tropical structure of $C$. He shows that if $C$ and $C^-$ were algebraically equivalent in $J(C)$, then for any $2$-chain $\gamma$ with $\partial\gamma=C-C^-$ we would have $$\int_\gamma\Omega_0\in \mathcal{P}(\Omega_0),$$ where $\mathcal{P}(\Omega_0)$ is the set of periods of $\Omega_0$. He then constructs an explicit such $2$-chain $\gamma_0$ and computes $$\int_{\gamma_0}\Omega_0=-ad.$$  After a straightforward verification, one observes that $-ad$ is generically not in $\mathcal{P}(\Omega_0)$, and concludes.

We now return to the statement of Theorem \ref{intro_main_theorem}. To the tropical torus $J(C)$, one can associate a symplectic manifold $$X(B):=T^*J(C)/T^*_\Z J(C),$$ where $T^*_\Z J(C)\subset T^*J(C)$ is a full rank lattice determining the tropical structure of $J(C)$. This is a $6$-dimensional symplectic torus constructed solely from the data of the tropical curve $C$. In fact, the cohomology class of the symplectic form is determined by its nine non-vanishing periods, which are the entries of the \textit{polarisation matrix} of the curve $C$:
\begin{equation}
	Q=	\begin{pmatrix}
		a+e+f & -f & -e \\ -f & b+d+f & -d \\ -e & -d & c+d+e
	\end{pmatrix}.\notag
\end{equation}

The Lagrangian Ceresa cycle $L_C-L_C^-$ associated to $C$ is the Lagrangian lift of $C-C^-$ to the Lagrangian torus fibration $$X(J(C))\longrightarrow J(C). $$ Each of its two embedded pieces is an oriented graph $3$-manifold built out of the $K4$ graph from Figure \ref{intro_K4}, with first and second Betti numbers $6$.

The proof of Theorem \ref{intro_main_theorem} can be sketched as follows. We show in Proposition \ref{alg_lag_cob} that if $L_C$ and $L_C^-$ were algebraic Lagrangian cobordant (in particular homologous), then for any $4$-chain $\gamma$ in $X(J(C))$ with $\partial\gamma=L_C-L_C^-$, we would have $$\int_\gamma\w^2\in\mathcal{P}(\w^2),$$ where $\mathcal{P}(\w^2)$ denotes the set of periods of $\w^2$.

Because $L_C-L_C^-$ is a Lagrangian lift to $X(J(C))$ of $C-C^-\subset J(C)$, we can build a $4$-chain $\hat{\phi}(\gamma_0)$ in $X(J(C))$ satisfying $\partial\hat{\phi}(\gamma_0)=L_C-L_C^-$ by \enquote{lifting} Zharkov's $2$-chain $\gamma_0$ in $J(C)$. 
This involves a conormal-type construction which has the virtue of satisfying
\begin{equation}\label{intro_flux_equality}
	\int_{\hat{\phi}(\gamma_0)}\w^2=\int_{\gamma_0}\Omega_0.
\end{equation} 
This equality stems from the fact that the $4$-form $\w^2$ on $X(J(C))$ can be obtained from the $2$-form $\Omega_0$ on $J(C)$ through a construction \enquote{dual} to the one taking $\gamma_0$ to $\hat{\phi}(\gamma_0)$. 
In particular, we have $$\mathcal{P}(\w^2)=\mathcal{P}(\Omega_0).$$ Putting this together with Zharkov's computation yields $$\int_{\hat{\phi}(\gamma_0)}\w^2=-ad\notin\mathcal{P}(\w^2),$$ from which we conclude. 

In is important to emphasize that the theory we build in this paper is much more general than what is strictly necessary for this proof. Equation \eqref{intro_flux_equality} above is merely an instance of a much more general correspondence we display between what we call \textit{tropical fluxes} and \textit{symplectic fluxes}.

Tropical fluxes are morphisms $$\Theta^{trop}:\Gr_k^{trop}(B)_{hom}\longrightarrow \R/\mathcal{P}_k,$$ where $B$ is any tropical torus, $\mathcal{P}_k\subset\R$ are finitely generated groups of periods, and $\Gr_k^{trop}(B)_{hom}$ denotes the homologically trivial part of the tropical Griffiths group of $B$. These morphisms are given by integrals over $(k+1)$-chains bounding the homologically trivial cycles. They are constructed in Section \ref{tropical_flux_section}, using the language of tropical (co)homology.

Similarly, symplectic fluxes are morphisms $$\Theta^{symp}:\algcob^{or}(M,\w)_{hom}\longrightarrow\R/\mathcal{P},$$ where $\algcob^{or}(M,\w)$ is the homologically trivial part of the oriented algebraic Lagrangian cobordism group of $(M,\w)$, and $\mathcal{P}\subset\R$ is a finitely generated group of periods. These are given by integrating $\w^2\wedge\beta$ over $(n+1)$-chains bounding homologically trivial Lagrangians, for any $(n-3)$-form $\beta$ on $M$. This is developed in Section \ref{flux_from_alglagcob}. Cobordism groups $\Cob(M,\w)$ carry a stronger version of these flux morphisms, see Lemma \ref{cob_implies_periods}.

The equality from Equation \eqref{intro_flux_equality} is then interpreted as an instance of a more general correspondence between tropical fluxes in a tropical torus $B$, and symplectic fluxes in the corresponding Lagrangian torus fibration $X(B)$. More concretely: if a nullhomologous tropical $k$-cycle $Z$ admits a Lagrangian lift $L_Z$, this correspondence gives $\Theta^{trop}(Z)=\Theta^{symp}(L_Z)$\footnote{Note that this equality should be made much more precise. In reality, we define \textit{multiple} maps $\Theta^{trop}$ and $\Theta^{symp}$, corresponding to different forms being integrated on the bounding chains. We should really say that given a \textit{particular} tropical flux (i.e., a particular choice of tropical form on $B$), can be equated to a \textit{particular} symplectic flux (i.e., a corresponding choice of $(n-3)$-form $\beta$) on $X(B)$.}. 
 
In these more general terms, our proof of Theorem \ref{intro_main_theorem} simply consists of exhibiting an instance of the following commutative diagram:
\begin{center}
	\begin{tikzcd}
		\algcob^{or}(X(B))_{hom} \arrow[rrdd, "symplectic \hspace{0.5mm}flux"]                           &  &       \\
		&  &       \\
		\Gr_k(B)_{hom} \arrow[uu, "Lagrangian\hspace{1mm}lift", dashed] \arrow[rr, "tropical \hspace{0.2mm}flux"] &  & \R/\mathcal{P},
	\end{tikzcd}
\end{center}
and concluding from Zharkov's computation.

\begin{remark}
	Note that we show (Remark \ref{cob_implies_algcob}) that a Lagrangian cobordism between $L$ and $L'$ gives rise to an algebraic Lagrangian cobordism between $L$ and $L'$ (which is what we could expect from the fact that algebraic equivalence is weaker than rational equivalence). Therefore, an immediate Corollary of Theorem \ref{intro_main_theorem} is the following:
	\begin{theorem}
		For generic tropical curve of type $K4$, the Lagrangian Ceresa cycle $L_C-L_C^-$ has infinite order in $\Cob^{or}(X(J(C)))$. 
	\end{theorem}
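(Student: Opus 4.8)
The plan is to deduce the statement from Theorem~\ref{intro_main_theorem} by comparing the two cobordism groups through a natural forgetful homomorphism. The guiding principle is that, just as rational equivalence is finer than algebraic equivalence of cycles, Lagrangian cobordism is a finer equivalence relation than algebraic Lagrangian cobordism; consequently the quotient defining $\Cob^{or}$ should surject onto the quotient defining $\algcob^{or}$, and infinite order in the latter will force infinite order in the former.

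Concretely, I would first invoke Remark~\ref{cob_implies_algcob}, which asserts that any Lagrangian cobordism between $L$ and $L'$ gives rise to an algebraic Lagrangian cobordism between them. This says precisely that the equivalence relation generated by Lagrangian cobordism is contained in the one generated by algebraic Lagrangian cobordism. Since both $\Cob^{or}(X(J(C)))$ and $\algcob^{or}(X(J(C)))$ are defined as quotients of the same free abelian group on (oriented, admissible) Lagrangians — the former by the relations coming from Lagrangian cobordisms, the latter by the (larger) set of relations coming from algebraic Lagrangian cobordisms — the identity map on generators descends to a well-defined surjective group homomorphism
\[
	\Phi:\Cob^{or}(X(J(C)))\longrightarrow\algcob^{or}(X(J(C))).
\]

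I would then conclude by a contradiction argument. By construction $\Phi$ sends the class of the Lagrangian Ceresa cycle to itself, i.e.\ $\Phi([L_C-L_C^-])=[L_C-L_C^-]$. Were $[L_C-L_C^-]$ of finite order $n$ in $\Cob^{or}(X(J(C)))$, so that $n[L_C-L_C^-]=0$ there, applying $\Phi$ would yield $n[L_C-L_C^-]=0$ in $\algcob^{or}(X(J(C)))$, contradicting Theorem~\ref{intro_main_theorem}, which states that this class has infinite order. Hence $[L_C-L_C^-]$ has infinite order in $\Cob^{or}(X(J(C)))$, as claimed.

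The substantive input is entirely contained in Theorem~\ref{intro_main_theorem} and in Remark~\ref{cob_implies_algcob}; given these, the corollary is a purely formal consequence of the fact that group homomorphisms carry torsion elements to torsion elements. Accordingly, the only step requiring genuine care is the well-definedness of $\Phi$: one must check that every defining relation of $\Cob^{or}$ maps to a valid relation in $\algcob^{or}$, equivalently that the upgrade from Lagrangian to algebraic Lagrangian cobordism of Remark~\ref{cob_implies_algcob} is compatible with the full relation structure (including multi-ended cobordisms) used to present $\Cob^{or}$. Once this compatibility is in hand, no further symplectic geometry is needed.
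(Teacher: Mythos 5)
Your proposal is correct and takes essentially the same route as the paper: the paper deduces this statement as an immediate corollary of Theorem \ref{intro_main_theorem} via Remark \ref{cob_implies_algcob}, which turns any (cylindrical) Lagrangian cobordism into an algebraic Lagrangian cobordism by gluing two copies of the cylinder, and which the paper explicitly packages as a natural projection $\Cob(M)\stackrel{\pi}{\longrightarrow}\algcob(M)$, so that torsion in $\Cob^{or}(X(J(C)))$ would map to torsion in $\algcob^{or}(X(J(C)))$. The well-definedness check you flag is exactly what this projection $\pi$ encodes, so your argument matches the paper's.
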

\end{remark}

\subsection*{Structure of the paper}

Section \ref{tropical_section} contains two parts. The first introduces the relevant background in tropical geometry before going through the construction of the tropical Ceresa cycle associated to a tropical curve. Special attention is given to the case of the $K4$ curve, which is of main interest to us.

The second part defines tropical flux morphisms using the language of tropical (co)homology. These are given by integrals of tropical forms, which are generalisations of Zharkov's determinantal form. 

In Section \ref{symplectic_flux} we give definitions for the various flavours of Lagrangian cobordism groups. We introduce symplectic flux morphisms from $\Cob^{or}(M,\w)$, and include  an example application to the cotangent bundle of a torus.

Section \ref{algebraic_Lagrangian_cobordisms} introduces the notion of algebraic Lagrangian cobordism.  We then construct symplectic flux maps from algebraic Lagrangian cobordism groups $\algcob(M,\w)$. We end by showing that these fluxes can be encoded as a map to a \textit{Lefschetz Jacobian}, a naturally tropical object which plays the role symplectically of intermediate Jacobians. 

The aim of Section \ref{relating_trop_symp_flux} is to formalise the relation between tropical fluxes in a tropical torus $B$, and symplectic fluxes in the corresponding Lagrangian torus fibration $X(B)$. 
Because both are given by integrating certain differential forms over chains, this involves constructing maps of (co)chains from tropical (co)chains in $B$ to singular (co)chains in $X(B)$ in a way that preserves the integration pairing. The underlying maps on (co)homology are a Künneth-type decomposition for $X(B)\cong B\times T^n$. 

We prove our main Theorem \ref{intro_main_theorem} in Section \ref{Lag_Ceresa_section} by bringing together elements of the previous Sections. This is done after an explicit description of the Lagrangian Ceresa cycle as a $3$-manifold inside the symplectic $6$-torus $X(J(C))$. We also speculate on a higher-dimensional extension of this result. 

We end with two additional remarks in Section \ref{further_remarks}. The first turns to the case of genus $3$ tropical curves of hyperelliptic type, whose Ceresa cycles provide one with a rich source of interesting questions and have inspired recent work \cite{ceresa_period_tropical,Ceresa_class_corey_et_al}. The second relates our work to a construction by Reznikov \cite{Reznikov_characteristic_classes_in_symplectic_topology} building characters of symplectic Torelli-type groups.

\subsection*{Notation}
We abuse notation and simply denote by $M$ a symplectic manifold $(M,\w)$. Similarly, $(M,-\w)$ will be written $\overline{M}$.

\subsection*{Acknowledgements}
	I am greatly endebted to Ivan Smith for his guidance and support, and for carefully reading previous drafts of this manuscript. I also thank an anonymous referee for providing constructive feedback which has improved the exposition of the paper. I was supported by EPSRC grant EP/X030660/1 for the duration of this work.

\section{The tropical Ceresa cycle and tropical flux}\label{tropical_section}

This Section consists of two parts. The first starts by introducing the relevant background in tropical geometry, with a view to constructing the tropical Ceresa cycle associated to a tropical curve. It ends with a detailed study of the case which is of interest to us, namely that of the $K4$ curve.

The second introduces the notion of \textit{tropical flux} using the language tropical (co)homology. These are morphisms from the homologically trivial part of the tropical Griffiths groups given by integrals of a generalised version of Zharkov's determinantal form \cite{zharkov_tropical_ceresa}. They formalise and generalise Zharkov's argument for proving Theorem \ref{intro_Ceresa_thm}, where he exhibits some non-trivial integral over a chain bounding the tropical Ceresa cycle.

\subsection{Tropical background}\label{tropical_background}

\subsubsection{Tropical tori}\label{tropical_tori}

We will be concerned with tropical tori, which are a particularly simple example of tropical manifold.

\begin{definition}\label{def_tropical_manifold}
	Let $B$ be a smooth manifold. A \textit{tropical affine structure} on $B$ is a set of coordinate charts $\{(U_i,\varphi_i)\}$ on $B$ such that for all $i,j$, the transition maps $\varphi_i\circ\varphi_j^{-1}$ lie in $\R^n\rtimes GL(n;\Z)$. That is, they are of the form $x\mapsto Ax+b$ with $A\in GL(n;\Z)$ and $b\in\R^n$. We call $B$ endowed with such a structure a \textit{tropical manifold}.
\end{definition}

\begin{remark}
	The base $B$ of a complete integrable system $M\rightarrow B$ carries a canonical tropical affine structure given by the action-angle coordinates \cite[Section 2.3]{johnnyevans_ltf}.
\end{remark}

From such a structure one can define a subsheaf $\textit{Aff}_B\subset C^\infty(B)$ of the sheaf of smooth functions, consisting of those functions which in any coordinate chart $(U_i,\varphi_i)$ are affine-linear with integer slope. Their differentials form a local system of rank $n$ lattices $T^{*}_\Z B$ inside  $T^{*}B$. Equivalently, there is a short exact sequence of sheaves 
\begin{equation}
	0\longrightarrow \underline{\R}\longrightarrow \textit{Aff}_B\longrightarrow T_\Z^*B\longrightarrow 0.  \notag
\end{equation}
Elements of the group of global sections $H^0(T_\Z^*B)$ are called \textit{tropical $1$-forms}. 
Furthermore, there is a dual lattice $T_\Z B\subset TB$ of those vectors on which $T_\Z^*B$ evaluate to integers, and elements of $H^0(T_\Z B)$ are called \textit{integral vectors}.

In the case where $B:=\R^n/\Gamma$ is a torus, the linear part of the monodromy on the sheaf of affine functions vanishes, and the local systems $T^{(*)}_\Z B$ are trivial. Therefore a tropical torus $B$ is simply the data of two rank $n$ lattices $\Gamma_1$ and $\Gamma_2$ in $\R^n$, such that $B=\R^n/\Gamma_1$ and the tropical structure on $B$ is given by $T_\Z B=\Gamma_2\subset\R^n$. Up to a global coordinate transformation one can take $\Gamma_2$ to be the standard lattice $\Z^n\subset\R^n$, then a tropical torus is determined by a matrix $Q$ in $GL(n;\R)$ such that $B=B(Q):=\R^n/Q\cdot\Z^n$ (note that left or right multiplication of $Q$ by elements of $GL(n;\Z)$ yield isomorphic tropical tori, see e.g. \cite[Lemma 2.4]{nick_ivan1}). 

A \textit{polarisation} on a tropical torus $B$ is a class in $H^1(T^*_\Z B)$ in the image of the tropical Chern class map $$H^1(\textit{Aff}_B)\longrightarrow H^1(T_\Z^*B).$$ From the isomorphism $H^1(T^*_\Z B)\cong \Gamma_1^*\otimes H^0(T^*_\Z B)\cong \hom(H_1(B;\Z),H^0(T_\Z^*B))$ this can be shown \cite[Section 5.1]{Mikhalkin_Zharkov_trop_curves_jac_theta_funct} to be equivalent to a map $$c:H_1(B;\Z)\longrightarrow H^0(T_\Z^*B)$$ such that the induced pairing on $H_1(B;\Z)$ given by $\int_{\gamma_i}c(\gamma_j)$ is positive-definite and symmetric. A polarisation is \textit{principal} if this map is an isomorphism.

\subsubsection{Tropical curves, their Jacobian and their Ceresa cycle}\label{tropical_curves_Jac_Ceresa}

We summarize the tropical version of the classical construction of the Jacobian of a curve and its Abel-Jacobi embedding, and give the definition of the tropical Ceresa cycle. 

\begin{definition}
	A \textit{tropical curve} consists of a finite connected graph $G$ together with a positive function $l:E(G)\longrightarrow \R_{\geq0}$ on its edge set $E(G)$. Given $e\in E(G)$, $l(e)$ is called the \textit{length} of $e$. 
\end{definition}

\begin{remark}
	As such, $C$ is not quite a tropical manifold of dimension $1$ as in Definition \ref{def_tropical_manifold}, but falls into the more general notion of \textit{tropical space} \cite[Definition 7.1.8]{tropical_book}. The two can be shown to be equivalent \cite[Section 8.1]{tropical_book} if one additionally requires $C$ to be smooth, regular at infinity, and of finite type \cite[Definition 7.4.1]{tropical_book}. For our purposes these assumptions are not restrictive, and we extend our constructions on tropical manifolds to tropical curves, where the tropical structure on the latter can be obtained by embedding edges of $C$ in $\R$ via metric-preserving charts (for more details, consult e.g. \cite[Proposition 3.6]{Mikhalkin_Zharkov_trop_curves_jac_theta_funct}).
\end{remark}

If the underlying graph of $C$ has genus $g$, then $H^0(T_\Z^*C)$ is a free abelian group of rank $g$; in particular $H^0(T_\R^*C):=H^0(T_\Z^*C)\otimes\R\cong\R^g$. We define the tropical Jacobian of $C$ as the torus
\begin{align}
	J(C):=H^0(T_\R^*C)^*/H_1(C;\Z),	\notag
\end{align}
where $H_1(C;\Z)$ is identified with a lattice inside $H^0(T_\R^*C)^*$ by setting $\gamma(\alpha):=\int_\gamma\alpha$ for any $\alpha\in H^0(T_\R^*C)$.

Given a choice of basepoint $p\in C$, the tropical Abel-Jacobi map
\begin{align}
	AJ_p:\hspace{2mm}& C\longrightarrow J(C) \notag \\
	& q\mapsto \left(\alpha\in\Omega^1(C)\mapsto \int_{\gamma:p\rightarrow q}\alpha\right) \notag
\end{align}
embeds $C$ into $J(C)$, where $\gamma:p\rightarrow q$ is any path in $C$ between $p$ and $q$. 

The Jacobian torus $J(C)$ carries both a canonical tropical structure given by the lattice $\Gamma_2\subset H^0(T_\R^*C)^*$ of those maps which evaluate to integers on global sections of $T_\Z C$, and a canonical principal polarisation which we describe following \cite[Section 6.1]{Mikhalkin_Zharkov_trop_curves_jac_theta_funct}. The bilinear form on the space of paths in $C$ defined by $$Q(E,E):=l(E)$$ on any edge $E$ of $C$ can be extended bilinearly. This induces a symmetric, positive-definite bilinear form on $H_1(C;\Z)\cong H_1(J(C);\Z)$, therefore a polarisation $$c(C):H_1(J(C);\Z)\longrightarrow H^0(T^*_\Z J(C))$$ by $\int_{\gamma_i}c(\gamma_j):=Q(\gamma_i,\gamma_j)$.

One can show \cite[Lemma 6.3]{Mikhalkin_Zharkov_trop_curves_jac_theta_funct} that the tropical structure on $J(C)$ makes $AJ_p$ into a tropical map, that is $(AJ_p)^*\textit{Aff}_{J(C)}=\textit{Aff}_C$. In the language of tropical Chow groups \cite{trop_intersection}, this implies that $AJ_p(C)$ is a \textit{tropical $1$-cycle} inside $J(C)$ which we write $C\in CH_1(J(C))$.

Furthermore, $J(C)$ inherits a group structure from the one on $H^0(T_\R^*C)^*$, hence carries a natural involution $$(-1):J(C)\longrightarrow J(C)$$ given by inverse for the group law. The \textit{Ceresa cycle} of $C$ is
\begin{equation}
	AJ_p(C)-(-1)_*(AJ_p(C))\in CH_1(J(C)). \notag
\end{equation}
We will often abuse notation and write $C$ instead of $AJ_p(C)$, $C^-$ instead of $(-1)_*(AJ_p(C))$, and the Ceresa cycle simply as $C-C^-$.

\subsubsection{An important example: the genus $3$ curve of type $K4$}\label{important_example}

Here we explore the case where $C$ is a genus $3$ curve of type $K4$ represented in Figure \ref{K4}; which is of primary interest to us. It is one of five types of generic genus $3$ tropical curves, and distinguishes itself by being the only \textit{non-hyperelliptic} type. A tropical curve is of \textit{hyperelliptic type} if its Jacobian is isomorphic to the Jacobian of a hyperelliptic tropical curve. 

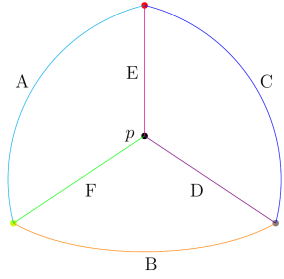
\begin{figure}
	\centering
	\scalebox{0.6}{
		\begin{tikzpicture}
			\pgfdeclarelayer{nodelayer} 
			\pgfdeclarelayer{edgelayer}
			\pgfsetlayers{main,nodelayer,edgelayer}
			\begin{pgfonlayer}{nodelayer}
				\node [fill,circle,label={above right:$p$}] (0) at (0, 0) {};
				\node [fill,circle,color=red] (1) at (0, 3.75) {};
				\node [fill,circle,color=lime] (2) at (-3.5, -2) {};
				\node [fill,circle,color=gray] (3) at (3.5, -2) {};
				
				\node[label={left: E}] (4) at (0.1,1.875) {};
				\node[label={above:F}] (5) at (-1.75,-1.7) {};
				\node[label={above:D}] (6)  at (1.75,-1.7) {};
				\node[label={below left:A}] (7) at (-3.0,1.75) {};
				\node[label={below left:C}] (8) at (3.8,1.75) {};
				\node[label={above:B}] (9) at (0,-4.15) {};
			\end{pgfonlayer}
			\begin{pgfonlayer}{edgelayer}
				\draw[very thick,color=magenta] (1.center) to (0.center);
				\draw[very thick,color=green] (0.center) to (2.center);
				\draw[very thick,color=violet](0.center) to (3.center);
				\draw [very thick, color=cyan,bend right=45] (1.center) to (2.center);
				\draw [very thick, color=blue, bend left=45] (1.center) to (3.center);
				\draw [very thick, color=orange,bend right=45] (2.center) to (3.center);
			\end{pgfonlayer}
	\end{tikzpicture}}
	\caption{A tropical curve of type $K4$.}
	\label{K4}
\end{figure}

When a curve is hyperelliptic, its Ceresa cycle is trivial: this can be seen by embedding it in its Jacobian using as a basepoint a Weierstrass point $p\in C$. Then one finds that $AJ_p(C)=(-1)_*(AJ_p(C))$. However, as soon as $g(C)\geq 3$, the Ceresa cycle (whether in its classical or tropical version) has been the object of extensive study for being an example of a cycle which is homologically, but not algebraically, trivial. The following result is originally due to Ceresa in the classical setting \cite{Ceresa}, but its tropical version was proved by Zharkov:
\begin{theorem}\cite[Theorem 3]{zharkov_tropical_ceresa}\label{Zharkov_theorem}
	Let $C$ be a generic genus $3$ tropical curve of type $K4$. Then $C$ is not algebraically equivalent to $C^-$ in its Jacobian $J(C)$.
\end{theorem}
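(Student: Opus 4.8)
The plan is to follow the structure laid out in the introduction's sketch of Theorem \ref{intro_Ceresa_thm}, namely to produce an obstruction to algebraic equivalence coming from a period constraint. The key principle, which must first be established, is that algebraic equivalence of two homologous $1$-cycles $Z_1, Z_2$ in a tropical abelian variety forces the integral of a suitable closed $2$-form over any $2$-chain bounding $Z_1 - Z_2$ to land inside the lattice of periods of that form. I would prove this by arguing that an algebraic equivalence provides a family of cycles over a parametrizing tropical curve, so that sweeping out from $Z_1$ to $Z_2$ the bounding chain can be realized as the image of a fundamental domain of that curve; the ambiguity in choosing the bounding $2$-chain is exactly a $2$-cycle, whose integral against a closed form is a period. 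This is the tropical analogue of the fact that the Abel--Jacobi-type invariant valued in $\R/\mathcal{P}$ is well defined and vanishes on algebraically trivial classes.

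Next I would introduce Zharkov's \emph{determinantal form} $\Omega_0$ explicitly. On the genus $3$ Jacobian $J(C) = H^0(T_\R^* C)^*/H_1(C;\Z) \cong \R^3/Q\cdot\Z^3$, one has a natural $3$-dimensional space of tropical $1$-forms spanned by $dx_1, dx_2, dx_3$, and I would define $\Omega_0$ as a $2$-form whose coefficients are built from the tropical/combinatorial data of the $K4$ graph (the edge lengths $a,\dots,f$ and the polarisation matrix $Q$). The crucial feature I must verify is that $\Omega_0$ is a closed form of the type to which the period obstruction above applies, so that $\int_\gamma \Omega_0 \bmod \mathcal{P}(\Omega_0)$ is an invariant of the class of $C - C^-$ that depends only on algebraic-equivalence class.

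Then comes the computational heart: construct an explicit $2$-chain $\gamma_0$ in $J(C)$ with $\partial \gamma_0 = C - C^-$, and compute $\int_{\gamma_0}\Omega_0$. Here I would use the description of $C \hookrightarrow J(C)$ via the Abel--Jacobi map together with the involution $(-1)$, building $\gamma_0$ as an interpolating membrane between the image of the curve and its reflection; the $K4$ structure makes this a finite union of affine pieces over which $\Omega_0$ integrates to a manageable sum. Following Zharkov, I expect this to collapse to a single monomial in the edge lengths, namely $\int_{\gamma_0}\Omega_0 = -ad$. Finally I would compute the period lattice $\mathcal{P}(\Omega_0)$ explicitly as the subgroup of $\R$ generated by the integrals of $\Omega_0$ over the integral $2$-cycles of $J(C)$, obtaining a finitely generated (hence countable, indeed rank-bounded) subgroup whose generators are again polynomial expressions in $a,\dots,f$. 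A Zariski-density / genericity argument then shows that the single equation $-ad \in \mathcal{P}(\Omega_0)$ carves out a proper algebraic subset of the parameter space $(a,\dots,f) \in \R_{>0}^6$, so for generic edge lengths $-ad \notin \mathcal{P}(\Omega_0)$ and $C - C^-$ cannot be algebraically trivial.

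I expect the main obstacle to be twofold. First, rigorously justifying the period obstruction in the \emph{tropical} setting: one must make precise what a tropical family of cycles over a higher-genus tropical curve is and why it yields a bounding chain whose integral is well defined modulo periods — this is where the tropical Chow / tropical Griffiths machinery of Section \ref{tropical_flux_section} does the real work, and it is more delicate than the rational-equivalence (genus $0$) case since the parametrizing base is no longer contractible. Second, the explicit identification and computation of $\mathcal{P}(\Omega_0)$, followed by the genericity check that $-ad$ escapes it, requires careful bookkeeping of the nine entries of $Q$; the genericity statement itself is routine once $\mathcal{P}(\Omega_0)$ is pinned down, but pinning it down — and confirming $\Omega_0$ is chosen so that its periods do not accidentally contain $-ad$ identically — is the step most prone to subtle error.
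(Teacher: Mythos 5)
There is a genuine gap, and it sits exactly at the step you flag as the ``key principle.'' You treat $\Omega_0$ as an ordinary closed $2$-form and justify the period constraint by noting that the ambiguity in the bounding $2$-chain is a $2$-cycle, whose integral is a period. That only proves the flux is \emph{well defined} modulo periods; it does not prove it \emph{vanishes} on algebraically trivial classes, which is what the obstruction requires. For an ordinary closed $2$-form the claimed principle is simply false: translating a $1$-cycle in $J(C)$ by a vector $v$ gives an algebraically trivial difference of cycles (any two points of a polarised tropical torus lie on a tropical curve), yet the $2$-chain swept by the translation pairs with an invariant $2$-form to give essentially $\iota_v$ of the form integrated over the cycle --- generically \emph{not} a period. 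This is exactly the nonvanishing translation flux that flux arguments exploit, and Remark \ref{no_trop_flux_0_cycles} records the same failure for $k=0$. So your first step, as stated, would prove too much and is not salvageable for arbitrary closed forms.

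What actually makes the argument work --- in Zharkov's proof and in the machinery of Section \ref{tropical_flux_section} --- is that $\Omega_0$ is not an ordinary form but a tropical cochain in $C^2(J(C);T^*_\Z J(C))$, paired not with $C-C^-$ itself but with the \emph{tautological} cycle $\overline{C}-\overline{C^-}$ carrying coefficients in $T_\Z J(C)$. An algebraic equivalence $W\subset J(C)\times C'$ is converted (Section \ref{construction_tropical_chain_equivalence}) into a tropical chain $\tilde W$ whose coefficient framing $\beta(\sigma)$ on each $(k{+}1)$-cell is pulled back from the tautological framing of the fibres, hence consists of vectors \emph{tangent} to the cell; the determinantal form annihilates any linearly dependent tuple of vectors (Proposition \ref{pre_determinantal_form_vanishes}), so $\Omega_0$ vanishes pointwise on $\tilde W$ and $\int_{\tilde W}\Omega_0=0$ (Corollary \ref{determinantal_form_vanishes}, Remark \ref{determinantal_form_vanishes_locally}). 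This tautological-framing/linear-dependence mechanism is the heart of the proof and is absent from your proposal. Two smaller corrections: the coefficients of $\Omega_0=e_1^*\otimes dx_2\wedge dx_3-e_2^*\otimes dx_1\wedge dx_3+e_3^*\otimes dx_1\wedge dx_2$ are universal signs, independent of the combinatorics --- the edge lengths $a,\dots,f$ enter only through the period lattice $\mathcal{P}^3_{2,1}$ via the minors of $Q$ (Proposition \ref{periods_computation}) and through the chain $\gamma_0$; and since $\mathcal{P}(\Omega_0)$ is a countable group, the genericity locus is the complement of a \emph{countable union} of proper algebraic subsets of $\R_{>0}^6$, not of a single one.
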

The fact that Zharkov's argument does not apply to other non-hyperelliptic curves which are of hyperelliptic type, can be understood as a failure of the tropical Torelli Theorem; this is discussed in Section \ref{hyperelliptic_types_section}. 

\begin{remark}\label{Ceresa_nullhomologous}
	Theorem \ref{Zharkov_theorem} is only non-trivial because the Ceresa cycle is nullhomologous. In the classical case,  this follows from the fact that $(-1)$ acts trivially on $H_j(J(C);\Z)$ for $j$ even. 
	Here, it follows immediately from the fact that Zharkov constructs an explicit $2$-chain $\gamma_0$ with $\partial\gamma_0=C-C^-$. 
\end{remark}
\begin{remark}
	The stronger statement that $C-C^-$ has \textit{infinite order} in $CH_1(J(C))$ modulo algebraic equivalence follows from Zharkov's proof, and is proved by Ceresa in the classical setting \cite{Ceresa}. We address this again in Remark \ref{remark_torsion_case}.
\end{remark}

Let us now study this curve and its Jacobian more explicitly. The basis $\{\gamma_1,\gamma_2,\gamma_3\}$ of $H_1(C;\Z)=\Gamma_1$ is represented in Figure \ref{K4_basis_oh}. Recall that $\Gamma_2\subset H^0(T^*_\R C)^*\cong\R^3$ is the lattice of those maps which evaluate to integers on elements of $H^0(T_\Z^*C)$. This determines the tropical structure of $J(C)$. A basis $\{e_1,e_2,e_3\}$ of $\Gamma_2$ is also represented by the vectors in Figure \ref{K4_basis_oh}.
These two bases are related through
\begin{align}
	&\gamma_1=(a+f+e,-f,-e)& &\gamma_2=(-f,b+d+f,-d)& &\gamma_3=(-e,-d,c+d+e), \notag
\end{align}
where lowercase letters denote the length of the edge labelled by the corresponding uppercase letter. 

\begin{figure}[htb]
	\centering
	\includegraphics[width=2in]{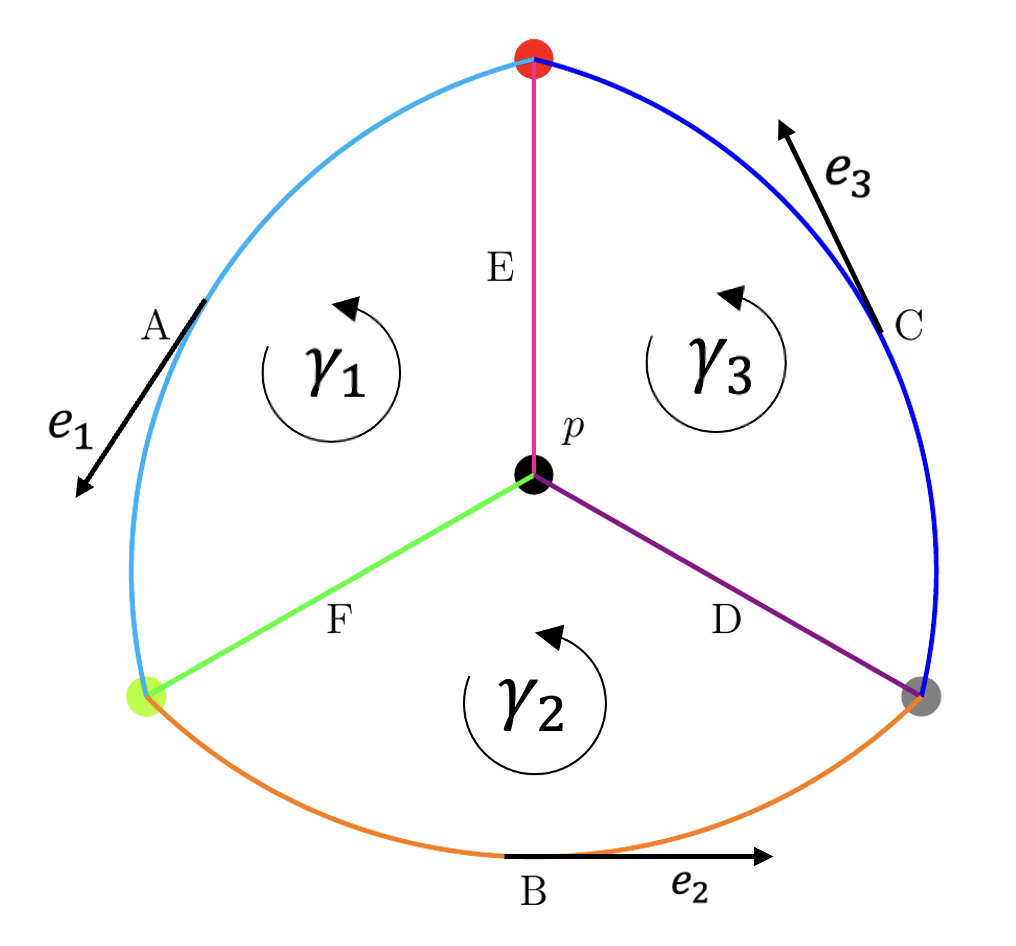} 
	\caption{Bases for $\Gamma_1$ and $\Gamma_2$ represented in $C$.}
	\label{K4_basis_oh}
\end{figure}

In the colour-coded figure \ref{K4_in_Jacobian}, we represent the image of $C$ in its Jacobian by the Abel-Jacobi map $AJ_p$, with the arbitrary choices of edge lengths
\begin{align}
	&a=1&&b=0.5&&c=1.5&&d=1&&e=0.5&&f=1.& \notag
\end{align}

\begin{figure}
	\centering
	\begin{tikzpicture}[x=1.8cm,y=1.8cm,z=1.0cm,>=stealth]
		
		\tikzmath{\x1=1;\x2=0.5;\x3=1.5;\x4=1;\x5=0.5;\x6=1;}
		
		
		\draw[->](-3.4,-2,1.2)--(-2.4,-2,1.2); 
		\draw[->](-3.4,-2,1.2)--(-3.4,-1,1.2); 
		\draw[->](-3.4,-2,1.2)--(-3.4,-2,2.2); 
		\node[label={below:$e_1$}] at (-2.4,-2,1.2) {};
		\node[label={left:$e_2$}] at (-3.4,-1,1.2) {};
		\node[label={above:$e_3$}] at (-3.4,-2,2.2) {};

		\draw[] (0,0,0)--node[midway]{$\blacktriangle$} (\x1+\x6+\x5,-\x6,-\x5);
		\draw[] (-\x6,\x2+\x4+\x6,-\x4)--node[midway]{$\blacktriangle$} (\x1+\x5,\x2+\x4,-\x5-\x4);
		\draw[] (-\x5,-\x4,\x3+\x4+\x5)--node[midway]{$\blacktriangle$} (\x1+\x6,-\x4-\x6,\x3+\x4);
		\draw[] (-\x6-\x5,\x2+\x6,\x3+\x5)--node[midway]{$\blacktriangle$} (\x1,\x2,\x3);
		\draw[] (0,0,0)--node[midway]{$\bigstar$} (-\x6,\x2+\x4+\x6,-\x4);
		\draw[] (\x1+\x6+\x5,-\x6,-\x5)--node[midway]{$\bigstar$}(\x1+\x5,\x2+\x4,-\x5-\x4) ;
		\draw[] (-\x5,-\x4,\x3+\x4+\x5)--node[midway]{$\bigstar$} (-\x5-\x6,\x2+\x6,\x3+\x5);
		\draw[] (\x1+\x6,-\x4-\x6,\x3+\x4)--node[midway]{$\bigstar$} (\x1,\x2,\x3);
		\draw[] (0,0,0)--node[midway]{$\blacksquare$} (-\x5,-\x4,\x3+\x4+\x5);
		\draw[] (\x1+\x6+\x5,-\x6,-\x5)--node[midway]{$\blacksquare$} (\x1+\x6,-\x4-\x6,\x3+\x4);
		\draw[] (-\x6,\x2+\x4+\x6,-\x4)--node[midway]{$\blacksquare$} (-\x5-\x6,\x2+\x6,\x3+\x5);
		\draw[] (\x1+\x5,\x2+\x4,-\x5-\x4)--node[midway]{$\blacksquare$} (\x1,\x2,\x3);
		
		\draw[] (\x1+\x5+\x6,-\x6,-\x5)--node[midway]{$\blacksquare$} (\x1+2*\x5+\x6,-\x6+\x4,-\x3-\x4-2*\x5); 
		\draw[] (0,0,0)--node[midway]{$\blacksquare$} (\x5,\x4,-\x3-\x4-\x5); 
		\draw[] (-\x6,\x2+\x4+\x6,-\x4)--node[midway]{$\blacksquare$} (-\x6+\x5,\x2+2*\x4+\x6,-\x3-2*\x4-\x5); 
		\draw[] (\x1+\x5,\x2+\x4,-\x5-\x4)--node[midway]{$\blacksquare$} (\x1+2*\x5,\x2+2*\x4,-\x3-2*\x5-2*\x4); 
		\draw[] (\x1+2*\x5,\x2+2*\x4,-\x3-2*\x5-2*\x4)--node[midway]{$\blacktriangle$} (-\x6+\x5,\x2+2*\x4+\x6,-\x3-2*\x4-\x5);
		\draw[] (\x1+2*\x5,\x2+2*\x4,-\x3-2*\x5-2*\x4)--node[midway]{$\bigstar$} (\x1+2*\x5+\x6,-\x6+\x4,-\x3-\x4-2*\x5);
		\draw[] (\x5,\x4,-\x3-\x4-\x5)--node[midway]{$\blacktriangle$} (\x1+2*\x5+\x6,-\x6+\x4,-\x3-\x4-2*\x5);
		\draw[] (\x5,\x4,-\x3-\x4-\x5)--node[midway]{$\bigstar$} (-\x6+\x5,\x2+2*\x4+\x6,-\x3-2*\x4-\x5);
		
		\node[fill,circle,inner sep=1.5pt,label={below left:$p$}] at (0,0,0) {};
		\node[fill,circle,inner sep=1.5pt,label={above:$\gamma_2$}] at (-\x6,\x2+\x4+\x6,-\x4){};
		\node[fill,circle,inner sep=1.5pt,label={above:$\gamma_1$}] at (\x1+\x5+\x6,-\x6,-\x5){};
		\node[fill,circle,inner sep=1.5pt,label={above:$\gamma_3$}] at (-\x5,-\x4,\x3+\x4+\x5){};
		\node[fill,circle,inner sep=1.5pt,label] at (-\x5-\x6,\x2+\x6,\x3+\x5){}; 
		\node[fill,circle,inner sep=1.5pt,label] at (\x1+\x5,\x2+\x4,-\x5-\x4){}; 
		\node[fill,circle,inner sep=1.5pt,label] at (\x1+\x6,-\x4-\x6,\x3+\x4){}; 
		\node[fill,circle,inner sep=1.5pt,label] at (\x1,\x2,\x3){}; 
		\node[fill,circle,inner sep=1.5pt,label={above:$-\gamma_3$}] at 	(\x5,\x4,-\x3-\x4-\x5){};
		\node[fill,circle,inner sep=1.5pt,label] at (-\x6+\x5,\x2+2*\x4+\x6,-\x3-2*\x4-\x5){};
		\node[fill,circle,inner sep=1.5pt,label] at (\x1+2*\x5+\x6,-\x6+\x4,-\x3-\x4-2*\x5){};
		\node[fill,circle,inner sep=1.5pt,label] at (\x1+2*\x5,\x2+2*\x4,-\x3-2*\x5-2*\x4){}; 
		
		\draw[very thick,color=magenta] (0,0,0)--(\x5,0,-\x5);
		\draw[very thick,color=magenta] (0,-\x4,\x4+\x3)--(-\x5,-\x4,\x4+\x3+\x5);
		\draw[very thick,color=orange] (-\x6,\x6,0)--(-\x6,\x6+\x2,0);			
		\draw[very thick,color=violet] (0,0,0)--(0,-\x4,\x4);		
		\draw[very thick,color=violet] (-\x6,\x6+\x2,0)--(-\x6,\x6+\x2+\x4,-\x4);			
		\draw[very thick,color=blue] (0,-\x4,\x4)--(0,-\x4,\x4+\x3);
		\draw[very thick,color=cyan](\x5,0,-\x5)--(\x5+\x1,0,-\x5);
		\draw[very thick,color=green] (0,0,0)--(-\x6,\x6,0);
		\draw[very thick,color=green] (\x5+\x1,0,-\x5)--(\x5+\x1+\x6,-\x6,-\x5);
	\end{tikzpicture}	
	\caption{A representation of the embedding of the type $K4$ tropical curve in its Jacobian, with the edges colour-coded as in Figure \ref{K4_basis}, and the two bases $\{\gamma_1,\gamma_2,\gamma_3\}$ and $\{e_1,e_2,e_3\}$ represented as well. The two black parallelepipeds are two copies of $J(C)$ inside $\Gamma_2\otimes\R\cong\R^3$, and edges with the same symbols $\blacksquare,\blacktriangle$, or $\bigstar$ are identified.}
	\label{K4_in_Jacobian}
\end{figure}

\subsection{Tropical flux}\label{tropical_flux_section}

Recall from the Introduction the main ideas of Zharkov's proof of Theorem \ref{Zharkov_theorem}. He starts by introducing a $2$-form $\Omega_0$ on $J(C)$ called \textit{determinantal form}. He then shows that, if two $1$-cycles are algebraically equivalent (in particular they are homologically equivalent), then integrating $\Omega_0$ over any $2$-chain realising this homological equivalence must yield a period of $\Omega_0$. He proceeds to constructing an explicit $2$-chain $\gamma_0$ in $J(C)$ with $\partial\gamma_0=C-C^-$, computes $\int_{\gamma_0}\Omega_0$, and shows it is \textit{not} a period of $\Omega_0$ for generic $C$. 
The aim of the current Section is to formalise and generalise this idea of using non-trivial integrals to detect obstructions to algebraic equivalence. The result are morphisms $$\Theta_k:\Gr_k(B)\longrightarrow \R/\mathcal{P}_k,$$ where $\Gr_k(B)$ is the $k$-th Griffiths group of $B$ of homologically trivial $k$-cycles modulo algebraic equivalence, and $\mathcal{P}_k$ is a corresponding set of periods. This involves generalising the determinantal form $\Omega_0$, and the construction of certain $(k+1)$-chains bounding tropical $k$-cycles. The objects involved are \textit{tropical} chains and cochains; their coefficients encode information about the tropical structure of the underlying manifold. 

Later, in Section \ref{relating_trop_symp_flux}, we will show that for a torus $B$, these tropical fluxes between tropical cycles are related to symplectic fluxes between their Lagrangian lifts in the symplectic manifold $X(B):=T^*B/T^*_\Z B$ in the sense of Section \ref{symplectic_flux}. 

\subsubsection{Tropical (co)homology}\label{tropical_(co)homology}

Tropical (co)homology was originally introduced in \cite{tropical_homology_ikmz}, with the idea of encoding information about the (co)homology of an algebraic variety in the tropical (co)homology of its tropicalization. This is done by enriching singular (co)chains with coefficients in the exterior powers of the integral lattices $T_\Z^{(*)}B$ of a tropical manifold $B$, as a way of recording data from the tropical structure. In our case $B$ is a tropical torus, and we have seen that this means the local systems of lattices $T_\Z^{(*)}B$ are constant, therefore these are global coefficients.

For any $k\geq0$, we consider the chain complexes $C_j(B;\Lambda^k T_\Z B),$ whose objects are $j$-simplices $\sigma:\Delta\rightarrow B$ with coefficients in $\Lambda^kT_\Z B\cong \Lambda^k\Gamma_2$.
The boundary map is given by restricting coefficients to the usual boundary map on singular $j$-chains. The resulting homology groups are denoted by $H_j(B;\Lambda^k T_\Z B)$. Similarly, we define the cohomology groups $H^j(B;\Lambda^kT^*_\Z B)$ for any $k$, by taking the cohomology of the dual complex $C^j(B;\Lambda^kT_\Z^*B)$. 
The universal coefficient theorem yields canonical isomorphisms:
\begin{align}
	H_j(B;\Lambda^k T_\Z B)\cong \Lambda^j\Gamma_1\otimes\Lambda^k \Gamma_2 \notag \\
	H^j(B;\Lambda^kT_\Z^*B)\cong\Lambda^j\Gamma^*_1\otimes\Lambda^k\Gamma^*_2. \notag
\end{align}

\subsubsection{Tropical chains from algebraic equivalences}\label{construction_tropical_chain_equivalence}

This section does not contain any original ideas, but extends constructions of Zharkov \cite[Section 3.1]{zharkov_tropical_ceresa} from the case of $1$-cycles to that of arbitrary $k$-cycles. We borrow some of the notation and terminology for the sake of consistency. Note that here $B$ could be any tropical manifold, not just a tropical torus.

Our first aim is to translate algebraic equivalence of cycles into the language of tropical (co)chains, in order to obtain tropical (co)homological obstructions to two cycles being algebraic equivalent. This requires two steps. The first is to associate to a tropical cycle a \textit{tautological tropical cycle}, which is a cycle in the tropical cohomology (i.e., it has coefficients in exterior powers of $T^*_\Z B$). The second is, given an algebraic equivalence between two cycles, to construct a tropical chain realising the tropical homological equivalence between the corresponding tautological tropical cycles.

\begin{definition}
	A \textit{tropical $k$-cycle} in $B$ is a weighted balanced $k$-dimensional polyhedral complex with $\Gamma_2$-rational slopes.
\end{definition}

Two such cycles $Z_1$ and $Z_2$ are \textit{algebraically equivalent} in $B$ if there exists two points $c_1$ and $c_2$ in some tropical curve $C$, and a tropical $(k+1)$-cycle $W$ in $B\times C$ satisfying $$\pi_*\left(W\cap(B\times\{c_1\})-W\cap(B\times\{c_2\})\right)=Z_1-Z_2,$$ where $\pi:B\times C\rightarrow B$ is the projection onto the first factor, and $\cap$ denotes tropical intersection (also called stable interesection, \cite[Section 4.3]{tropical_book}). This defines an equivalence relation. 

To any tropical $k$-cycle $Y$, one can associate a canonical element of $C_k(B;\Lambda^kT_\Z B)$ called a \textit{tautological tropical cycle} $\overline{Y}$ as follows. Every oriented $k$-face $f$ of $Y$ with weight $w_f$ defines a $k$-cell $w_f\sigma_f$, as well as $k$ primitive tangent vectors $v_1,\dots,v_k$ up to a transformation in  $SL(n;\Z)$. The data of an orientation on $f$ yields a preferred element $\beta(f)=v_{i_1}\wedge\dots\wedge v_{i_k}$ in $\Lambda^kT_\Z B$. Then the tautological cycle associated to $Y$ is $$\overline{Y}=\sum_{f\in Y}\beta(f)w_f\cdot \sigma_f.$$ 
Reversing the orientation on $f$ has the effect of taking $\beta(f)$ to $-\beta(f)$ as well as $\sigma_f$ to $-\sigma_f$, and these cancel out in $C_j(B;\Lambda^k T_\Z B)$, making $\overline{Y}$ well-defined. The fact that $\overline{Y}$ is closed follows from the fact that $Y$ satisfies a balancing condition: an $(n-1)$-face in the boundary of $s$ distinct $n$-faces $f_1,\dots,f_s$ will appear $s$ times with coefficients $\beta(f_1)\w_{f_1},\dots,\beta(f_s)\w_{f_s}$, whose sum vanishes by the balancing condition. 

The existence of an algebraic equivalence $W\subset B\times C$ between two $k$-cycles $Y_1$ and $Y_2$ in $C$ implies that $[\overline{Y_1}]=[\overline{Y_2}]$ in $H_k(B;\Lambda^{k}T_\Z B)$. Still following \cite[Section 3.1]{zharkov_tropical_ceresa}, we construct some $\Tilde{W}$ in $C_{k+1}(B;\Lambda^kT_\Z B)$ satisfying $$\partial\Tilde{W}=\overline{Y_1}-\overline{Y_2}.$$
We proceed by choosing a path $P$ in $C$ from $c_1$ to $c_2$, and letting $$Y_c:=\pi_*(W\cap(B\times\{c\}))$$ for any $c\in C$. Now define the weighted polyhedral complex $W_P\subset B\times C$ as the restriction of $W$ to $(\pi')^{-1}(P)$, where $\pi':B\times C\longrightarrow C$ is the projection onto the second factor. For any $(k+1)$-cell $\sigma$ in the support of $W_P$, two scenarios arise. Either $\pi'$ is not submersive on $\sigma$, in which case we set $\beta(\sigma)=0$, or it is submersive on $\sigma$. In this latter case, above each $c\in\pi'(\sigma)$, $\sigma\cap (Y_c\times\{c\})$ carries a tautological framing $\beta_c(\sigma)\in\Lambda^kT_\Z (B\times\{c\})$ as decribed above, which can be pulled back by $\pi$ to $\Lambda^kT_\Z (B\times C)$. Clearly, on a fixed $(k+1)$-cell $\sigma$, this is independent of $c$ because both $\pi$ and $\pi'$ are linear; denote this by $\beta(\sigma)$. We can now define $$\Tilde{W}:=\sum_{\sigma\in W_P}\beta(\sigma)w_\sigma\cdot\pi(\sigma)\in C_{k+1}(B;\Lambda^kT_\Z B).$$

\begin{lemma}\cite[Lemma 4]{zharkov_tropical_ceresa}
	Let $W$ be an algebraic equivalence between two algebraic $k$-cycles $Y_1$ and $Y_2$. Then the tropical $(k+1)$-chain $\Tilde{W}$ defined above satisfies $$\partial\Tilde{W}=\overline{Y_1}-\overline{Y_2}.$$
\end{lemma}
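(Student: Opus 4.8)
The plan is to compute $\partial\tilde{W}$ directly from its definition and reorganise the resulting sum of boundary faces according to their position relative to the two projections $\pi\colon B\times C\to B$ and $\pi'\colon B\times C\to C$. Since $\pi$ is affine-linear it commutes with the boundary operator, so
\[
\partial\tilde{W}=\sum_{\sigma\in W_P}\beta(\sigma)w_\sigma\cdot\pi(\partial\sigma),
\]
and I can collect terms according to the codimension-one faces $\tau$ of the $(k+1)$-cells $\sigma$. The guiding picture is that $W_P$, fibred over the path $P\cong[0,1]$ by $\pi'$, is the \emph{track} swept out by the slices $Y_c=\pi_*(W\cap(B\times\{c\}))$ as $c$ runs from $c_1$ to $c_2$; on each cell on which $\pi'$ is submersive this track is locally a prism, a product of a $k$-cell with a segment of $P$, whose fibre carries the framing $\beta(\sigma)$. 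The faces $\tau$ then fall into three geometric types, which I would treat in turn.

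First, the \emph{horizontal} faces lying in the extreme fibres over $c_1$ and $c_2$. Summing the prism caps over $c_1$ should reconstruct exactly the tautological cycle of $Y_{c_1}=Y_1$: the framing $\beta(\sigma)$ restricted to such a cap is, by construction, the tautological framing $\beta_{c_1}$ of the corresponding $k$-face of $Y_1$, and the incidence signs of the prism boundary are constant along the cap. Hence these faces assemble into $\overline{Y_1}$, and those over $c_2$ into $-\overline{Y_2}$, the relative sign being fixed once $P$ is oriented from $c_1$ to $c_2$. Second, the horizontal faces in interior fibres $B\times\{c\}$ with $c$ in the interior of $P$ must cancel: for generic $P$ the slice is, at interior $c$, a genuine $k$-cycle carrying a consistent tautological framing, so each such face is the top cap of the cell just below $c$ and the bottom cap of the cell just above it. By the $c$-independence of $\beta$ these two occurrences carry equal framing and equal weight but opposite incidence sign, and cancel; where several cells abut, this cancellation is precisely the statement that the framed slice $\overline{Y_c}$ is closed, exactly as in the argument that $\overline{Y}$ is a cycle.

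Third come the \emph{lateral} faces, transverse to the fibres, and this is where the balancing condition on the tropical cycle $W$ enters. A lateral $k$-face $\tau$ is shared by the $(k+1)$-cells $\sigma_i$ meeting along it, and balancing of $W$ around $\tau$ should force the weighted sum $\sum_i\beta(\sigma_i)w_{\sigma_i}$ of their fibre framings to vanish in $\Lambda^kT_\Z B$, so their contributions to $\partial\tilde{W}$ cancel after applying $\pi$. Finally, the cells on which $\pi'$ is \emph{not} submersive contribute nothing, since $\beta(\sigma)=0$ there by definition; I would check, again using balancing, that no boundary face of a submersive cell is left uncancelled against such a degenerate neighbour. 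Collecting the three types yields $\partial\tilde{W}=\overline{Y_1}-\overline{Y_2}$.

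I expect the main obstacle to be the careful bookkeeping of the lateral faces together with the balancing condition for the higher-codimension cycle $W$. In Zharkov's $k=1$ case the fibre framing is a single primitive tangent vector and balancing is the familiar vanishing of a sum of such vectors, whereas for general $k$ one must track framings in $\Lambda^kT_\Z B$ and verify that the balancing of $W$ as a $(k+1)$-cycle descends to the required linear relation among these $k$-vectors. A secondary technical point is to choose $P$ generically so that the slices $Y_c$ vary through only finitely many combinatorial types, making the prism decomposition above valid away from finitely many interior values of $c$.
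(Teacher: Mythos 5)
The paper gives no proof of this lemma: it is quoted with a citation to Zharkov's Lemma 4, and the surrounding section explicitly disclaims originality, so the only comparison available is with the cited source. Your prism decomposition of $W_P$ and the three-way sorting of boundary faces (end caps, interior horizontal faces, lateral faces) is exactly Zharkov's argument for $k=1$, correctly transported to general $k$, and the outline is sound. One attribution is swapped, though: the cancellation of the \emph{lateral} faces is precisely the closedness of the framed slice $\overline{Y_c}$ --- along a lateral $k$-face $\tau$ the adjacent slice framings factor as $\beta(\sigma_i)=\bigl(\text{framing of the }(k-1)\text{-slice of }\tau\bigr)\wedge u_i$, and balancing of $Y_c$, inherited from the balancing of $W$, forces $\sum_i\epsilon_i w_{\sigma_i}u_i$ into the tangent space of the slice of $\tau$, so the wedge kills it (note your formulation omits the incidence signs $\epsilon_i$, and the sum $\sum_i\beta(\sigma_i)w_{\sigma_i}$ need not literally vanish in $\Lambda^kT_\Z B$, only after this factoring modulo tangential directions); by contrast, the cancellation of \emph{interior horizontal} faces at a critical value $c^*$ is not closedness of $\overline{Y_{c^*}}$ but local constancy of the stable intersection in $c$, which follows from balancing of $W$ along horizontal $k$-faces projected to the $C$-direction (this is also what handles the non-submersive cells, whose vanishing framing alone does not explain why the caps of their submersive neighbours match across $c^*$). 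Finally, for the end caps to assemble to $\overline{Y_1}-\overline{Y_2}$ on the nose one must track the lattice-index factors entering the weights of the stable intersection $W\cap(B\times\{c_i\})$ and of the pushforward $\pi_*$; you flag this bookkeeping without carrying it out, which matches the level of detail the paper itself (deferring entirely to Zharkov) provides.
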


\subsubsection{Determinantal forms}\label{determinantal_forms_section}

The notion of a determinantal form on a tropical torus $B$ was originally introduced by Zharkov \cite[Section 3.2]{zharkov_tropical_ceresa} as a cocycle in $C^2(B;T^*_\Z B)$. We extend this construction to higher dimensions. Although in practice we will only be interested in determinantal forms in $C^{k+1}(B;\Lambda^kT^*B)$, we construct them in any $C^j(B;\Lambda^kT^*B)$. 
In theory, one could expect a version of these determinantal forms to make sense in any tropical manifold, however our construction is for tropical tori.

Let $e_1,\dots,e_n$ be a basis of $\Gamma_2$, and $x_1,\dots,x_n$ an associated choice of coordinates (well-defined up to translation) on $B$.

\begin{definition}\label{determinantal_forms}
	On a tropical $n$ torus, we define for any $j,k\leq n$ the \textit{determinantal form} $$\Omega^n_{j,k}:=\sum_{(i_1<\dots<i_{j+k})\in\{1,\dots,n\}^{j+k}\atop(j_1<\dots<j_k)\in\{i_1,\dots,i_{j+k}\}^{k}  } (-1)^{sgn(\sigma)}e^*_{j_1}\wedge\dots\wedge e^*_{j_k}\otimes dx_{s_1}\wedge\dots\wedge dx_{s_j},$$ where $s_1<\dots<s_j$ runs over $\{i_1,\dots,i_{j+k}\}\backslash\{j_1,\dots,j_k\}$, and $\sigma$ is the permutation taking $\{i_1,\dots,i_{j+k}\}$ to $\{j_1,\dots,j_k,s_1,\dots,s_j\}$.
\end{definition}

\begin{remark}
	Taking $n=3$, $k=1$ and $j=k+1=2$, we recover Zharkov's original determinantal form \cite{zharkov_tropical_ceresa} $$\Omega^3_{1,2}=e_1^*\otimes dx_2\wedge dx_3-e_2^*\otimes dx_1\wedge dx_3+ e_3^*\otimes dx_1\wedge dx_2.$$
\end{remark}

The period lattices of the forms $\Omega^n_{k,j}$ are nicely packaged in the matrix $Q\in GL(n;\R)$ determining the tropical torus $B=B(Q)$. We denote them by $\mathcal{P}^n_{k,j}\subset\R$.

\begin{proposition}\label{periods_computation}
	The periods $\mathcal{P}^n_{j,k} $of the determinantal form $\Omega^n_{j,k}$ on a tropical torus $B(Q)$ are given by integer combinations of the real numbers 
	\begin{align}
		\sum_{(s_1<\dots<s_j)\in(\{1,\dots,n\}\backslash\{i_1,\dots,i_k\})^j} (-1)^{sgn(\sigma)}Q_{(s_1,\dots,s_j),(r_1,\dots,r_j)} \notag 
	\end{align}
	for any tuples $(i_1<\dots<i_k)\in\{1,\dots,n\}^k$ and $(r_1<\dots<r_j)\in\{1,\dots,n\}^j$, where $\sigma$ is the permutation taking $(i_1,\dots,i_k,s_1,\dots,s_j)$ to its corresponding ordered $(k+j)$-tuple, and $Q_{(s_1,\dots,s_j),(r_1,\dots,r_j)}$ is the $j\times j$ minor of the $n\times n$ matrix $Q$ obtained by conserving only the rows $\{s_1,\dots,s_j\}$ and the columns $\{r_1,\dots,r_j\}$.
\end{proposition}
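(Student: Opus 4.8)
The plan is to read the periods $\mathcal{P}^n_{j,k}$ as the values of the integration pairing between the closed determinantal form $\Omega^n_{j,k}\in C^j(B;\Lambda^k T^*_\Z B)$ and integral homology classes, and then to evaluate this pairing on an explicit $\Z$-basis of the relevant homology group. The formula in the statement is, morally, just what one obtains by pairing $\Omega^n_{j,k}$ against the standard generators of $H_j(B;\Lambda^k T_\Z B)$.

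First I would observe that, since $B=B(Q)$ is a torus, the coefficient system $\Lambda^k T_\Z B\cong\Lambda^k\Gamma_2$ is constant and $\Omega^n_{j,k}$ has constant coefficients built from the closed forms $dx_{s_1}\wedge\dots\wedge dx_{s_j}$; hence $d\Omega^n_{j,k}=0$, the pairing $\langle\Omega^n_{j,k},-\rangle$ descends to homology, and $\mathcal{P}^n_{j,k}$ is exactly the image in $\R$ of the induced map on $H_j(B;\Lambda^k T_\Z B)$. By the universal coefficient isomorphism recalled above, this group is free abelian and isomorphic to $\Lambda^j\Gamma_1\otimes\Lambda^k\Gamma_2$, with $\Z$-basis the classes
$$\gamma_{r_1}\wedge\dots\wedge\gamma_{r_j}\otimes e_{i_1}\wedge\dots\wedge e_{i_k},\qquad r_1<\dots<r_j,\quad i_1<\dots<i_k,$$
where $\{\gamma_m\}$ is the basis of $\Gamma_1=Q\cdot\Z^n$ given by the columns of $Q$ and $\{e_m\}$ is the standard basis of $\Gamma_2$. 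Since the period group is generated by the pairings against a $\Z$-basis, it suffices to evaluate the pairing on each such class and show one recovers the displayed numbers.

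Next I would carry out the evaluation. The class $\gamma_{r_1}\wedge\dots\wedge\gamma_{r_j}$ is represented by the $j$-subtorus parametrised by $(t_1,\dots,t_j)\mapsto\sum_a t_a\gamma_{r_a}$, and using $\langle dx_s,\gamma_r\rangle=Q_{s,r}$ one computes
$$\int_{\gamma_{r_1}\wedge\dots\wedge\gamma_{r_j}} dx_{s_1}\wedge\dots\wedge dx_{s_j}=\det\big(\langle dx_{s_a},\gamma_{r_b}\rangle\big)=\det\big(Q_{s_a,r_b}\big)=Q_{(s_1,\dots,s_j),(r_1,\dots,r_j)},$$
the indicated $j\times j$ minor. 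On the coefficient side the dual pairing $\langle e^*_{j_1}\wedge\dots\wedge e^*_{j_k},\,e_{i_1}\wedge\dots\wedge e_{i_k}\rangle$ equals $1$ when $\{j_1,\dots,j_k\}=\{i_1,\dots,i_k\}$ and vanishes otherwise. Feeding a fixed basis class into Definition \ref{determinantal_forms}, only the terms with $(j_1,\dots,j_k)=(i_1,\dots,i_k)$ survive; for each such term the ambient index set is forced to be $\{i'_1,\dots,i'_{j+k}\}=\{i_1,\dots,i_k\}\cup\{s_1,\dots,s_j\}$, so that summing over surviving terms is the same as summing over all $j$-subsets $(s_1<\dots<s_j)$ of $\{1,\dots,n\}\setminus\{i_1,\dots,i_k\}$. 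This yields precisely the claimed expression up to the sign factors.

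The last and most delicate point, which I expect to be the main obstacle, is reconciling the two sign conventions. In Definition \ref{determinantal_forms} the sign of a surviving term is $(-1)^{sgn(\sigma)}$ with $\sigma$ carrying the \emph{ordered} tuple $(i'_1,\dots,i'_{j+k})$ to $(i_1,\dots,i_k,s_1,\dots,s_j)$, whereas in the proposition $\sigma$ carries $(i_1,\dots,i_k,s_1,\dots,s_j)$ to its reordering. These two permutations are mutually inverse, hence have equal signature, so the signs agree term by term and the pairing against the chosen basis class equals the quantity in the statement. Collecting these values over all $(i_1<\dots<i_k)$ and $(r_1<\dots<r_j)$ and taking integer combinations then gives $\mathcal{P}^n_{j,k}$. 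The minor computation is the standard fact that a top form integrated over a linearly parametrised subtorus returns the determinant of the pairing matrix, and the closedness step is formal; it is only the index-and-sign bookkeeping that requires care.
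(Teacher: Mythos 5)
Your proposal is correct and follows essentially the same route as the paper's proof: it pairs $\Omega^n_{j,k}$ against the standard $\Z$-basis of $H_j(B;\Lambda^k T_\Z B)\cong \Lambda^k\Gamma_2\otimes\Lambda^j\Gamma_1$, observes that only terms whose coefficient part matches $(i_1,\dots,i_k)$ survive, and evaluates the resulting integral as the minor $Q_{(s_1,\dots,s_j),(r_1,\dots,r_j)}$ using $\gamma_r=Q e_r$. Your explicit reconciliation of the two sign conventions (the permutations being mutually inverse, hence of equal signature) is in fact slightly more careful than the paper, which compresses this step into the phrase \enquote{appropriate linear combinations of the minors of $Q$}.
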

\begin{proof}
	The $\Z$-generators correspond to integrals of $\Omega^n_{k,j}$ over the homology classes $$e_{i_1}\wedge\dots\wedge e_{i_k}\otimes \gamma_{r_1}\wedge\dots\wedge \gamma_{r_j}\in H_j(B;\Lambda^kT_\Z B)\cong \Lambda^k\Gamma_2\otimes\Lambda^j\Gamma_1.$$
	The pairing of each term of the form $$e_{j_1}^*\wedge\dots\wedge e_{j_k}^*\otimes dx_{s_1}\wedge\dots\wedge dx_{s_j}$$ in the determinantal form with such a class is non-vanishing if and only if $(j_1,\dots,j_k)=(i_1,\dots,i_k)$, and in this case it is equal to 
	\begin{align}
		dx_{s_1}\wedge\dots\wedge dx_{s_j}(\gamma_{r_1}\wedge\dots\wedge \gamma_{r_j})=dx_{s_1}\wedge\dots\wedge dx_{s_j}((Qe_{r_1})\wedge\dots\wedge(Qe_{r_j})). \notag
	\end{align}
	This is given by $$Q_{(s_1,\dots,s_j),(r_1,\dots,r_j)}.$$ Therefore the periods of the determinantal forms are generated by appropriate linear combinations of the minors of $Q$. 
\end{proof}

\begin{example}
	In the case of the non-hyperelliptic genus $3$ curve from Section \ref{important_example}, these periods of $\Omega^3_{2,1}$ were computed by Zharkov \cite[Section 3.2]{zharkov_tropical_ceresa}. We use the bases represented in Figure \ref{K4_basis}. In particular $H_2(J(C);T_\Z J(C))\cong \Gamma_2\otimes \Lambda^2\Gamma_1$ has $9$ generators
	\begin{align}
		&e_1\otimes \gamma_1\wedge\gamma_2& 
		&e_2\otimes \gamma_1\wedge\gamma_2&
		&e_3\otimes \gamma_1\wedge\gamma_2 \notag \\
		&e_1\otimes \gamma_1\wedge\gamma_3& 
		&e_2\otimes \gamma_1\wedge\gamma_3& 
		&e_3\otimes \gamma_1\wedge\gamma_3 \notag \\
		&e_1\otimes \gamma_2\wedge\gamma_3& &e_2\otimes\gamma_2\wedge\gamma_3& 
		&e_3\otimes \gamma_2\wedge\gamma_3, \notag
	\end{align}
	and the matrix $Q$ is given by
	\begin{equation}
		Q=	\begin{pmatrix}
			a+e+f & -f & -e \\ -f & b+d+f & -d \\ -e & -d & c+d+e
		\end{pmatrix}.\notag
	\end{equation}
	Using Proposition \ref{periods_computation}, these yield $6$ different periods, corresponding to the different minors of $Q$:
	\begin{align}
		&ab+ad+af+be+de+ef+bf+df& &ad+ed+df+ef
		\notag \\
		&ac+ad+ae+ec+ed+cf+df+ef& &df+be+de+ef \notag \\ 
		&bc+bd+be+cd+de+cf+df+ef& &cf+df+ef+de. \notag 
	\end{align}
	This coincides with those obtained by Zharkov.    
\end{example}

\begin{remark}\label{periods_jacobians_polarisation_form}
	In the case where $B=J(C)$ is a tropical Jacobian, the periods can be expressed in terms of the canonical polarisation of $B$ for the following reason. In the period computation above, we have used the fact that the map \begin{align}
		\phi_Q:\hspace{1mm}&\Z^n\longrightarrow \R^n \notag \\
		&x\mapsto Q\cdot x \notag
	\end{align}
	viewed as a map from $H_1(B;\Z)$ to $\Gamma_1\otimes\R\cong H^0(T_\R B(Q))$ is precisely the integration pairing, that is for any $\eta\in H^0(T_\R^*B(Q))$ and $\gamma\in H_1(B(Q);\Z)$, $$\int_\gamma\eta=\eta(\phi_Q(\gamma)).$$ Integration over $H_k(B(Q);\Z)$ is obtained from exterior powers $\phi_Q$. In the case of a Jacobian $B=J(C)$, the image of this map is contained in the lattice $H^0(T_\Z J(C))\subset H^0(T_\R J(C))$.
	Very generally, as described in \cite[Lemma 3.9]{nick_ivan1}, any polarisation $$c:H_1(B;\Z)\longrightarrow H^0(T_\Z^*B)$$ on a tropical torus can be related to its integration pairing $$\phi:H_1(B;\Z)\longrightarrow H^0(T_\R B)$$ through an isomorphism $T_\R B\cong T_\R^*B$ whose image lives in $T_\Z^*B(Q)$. In the case of the canonical principal polarisation on a Jacobian $J(C)$, one can verify that this isomorphism is simply the canonical duality isomorphism $T_\Z J(C)\cong T_\Z^*J(C)$. 
\end{remark}

\subsubsection{Tropical flux}\label{tropical_flux}

Recall that the $k$-th tropical Griffiths group $\Gr_k(B)$ is defined as the set of homologically trivial tropical $k$-cycles in $B$, modulo the equivalence relation generated by algebraic equivalence.

If $dim(B)=n$, we define for all $k\geq1$ the \textit{tropical flux maps} 
\begin{align}
	\Theta_k:\hspace{1mm}\Gr&_k(B)\longrightarrow \R/\mathcal{P}^n_{k+1,k}\notag \\
	& \hspace{1mm}Z\hspace{5mm} \mapsto\hspace{2mm} \int_\gamma\Omega^n_{k+1,k}, \notag
\end{align}
where $\gamma\in C_{k+1}(B;\Lambda^kT_\Z B)$ satisfies $\partial\gamma=\overline{Z}$, and $\mathcal{P}^n_{k+1,k}$ is the period lattice of $\Omega^n_{k,k+1}$ as computed in Proposition \ref{periods_computation}.

The following Proposition and its subsequent Corollary \ref{determinantal_form_vanishes} constitute the proof that this map is well-defined: while it is clear that a different choice of $(k+1)$-chain $\gamma$ would only change $\int_\gamma\Omega^n_{k+1,k}$ by a period of $\Omega^n_{k+1,k}$, it remains to show that for $Z$ algebraically trivial, this integral vanishes.

\begin{proposition}\label{pre_determinantal_form_vanishes}
	Given any integers $j$, $k$, and $n$ with $j+k\leq n$, 	a point $b$ in $B$, and vectors $v_1,\dots,v_{j+k}$ in $T_bB$, we have $$(\Omega^n_{j,k})_b(v_1,\dots,v_{j+k})=0$$ whenever the collection $v_1,\dots,v_{j+k}$ is not linearly independent. 
\end{proposition}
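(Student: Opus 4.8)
The plan is to show that, once evaluated on tangent vectors at a point, the determinantal form $\Omega^n_{j,k}$ is simply an ordinary alternating $(j+k)$-covector, so that the asserted vanishing on a linearly dependent tuple reduces to the standard fact that any element of $\Lambda^{j+k}T_b^*B$ annihilates a linearly dependent collection of $j+k$ vectors.

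First I would fix the evaluation convention and recall that the coordinates $x_1,\dots,x_n$ are the ones associated to the chosen basis $e_1,\dots,e_n$ of $\Gamma_2=T_\Z B$. Pointwise at $b$ this means that $dx_i$ and the tropical covector $e_i^*$ are the very same element of $T_b^*B$, both being characterized by evaluating to $\delta_{i\ell}$ on $e_\ell$. Consequently each summand $e^*_{j_1}\wedge\cdots\wedge e^*_{j_k}\otimes dx_{s_1}\wedge\cdots\wedge dx_{s_j}$ of $\Omega^n_{j,k}$, when fed tangent vectors, is computed via the wedge product of $\Lambda^\bullet T_b^*B$ as the decomposable covector $dx_{j_1}\wedge\cdots\wedge dx_{j_k}\wedge dx_{s_1}\wedge\cdots\wedge dx_{s_j}\in\Lambda^{j+k}T_b^*B$.

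The one computational step is the sign bookkeeping. By definition $\sigma$ is the permutation carrying $(i_1,\dots,i_{j+k})$ to $(j_1,\dots,j_k,s_1,\dots,s_j)$, so reordering the wedge above gives $dx_{j_1}\wedge\cdots\wedge dx_{s_j}=(-1)^{sgn(\sigma)}\,dx_{i_1}\wedge\cdots\wedge dx_{i_{j+k}}$; since the prefactor $(-1)^{sgn(\sigma)}$ squares to $1$, each summand collapses to the single decomposable covector $dx_{i_1}\wedge\cdots\wedge dx_{i_{j+k}}$, independently of the subset $\{j_1,\dots,j_k\}$ chosen. Summing over the $\binom{j+k}{k}$ such subsets and then over the index tuples $i_1<\cdots<i_{j+k}$, the evaluation of $\Omega^n_{j,k}$ at $b$ agrees with that of the genuine differential form $\binom{j+k}{k}\sum_{i_1<\cdots<i_{j+k}}dx_{i_1}\wedge\cdots\wedge dx_{i_{j+k}}$, which is manifestly an element of $\Lambda^{j+k}T_b^*B$.

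It then remains only to invoke the elementary property of alternating forms: if $v_1,\dots,v_{j+k}$ are linearly dependent, write one of them, say $v_m$, as a linear combination of the others; multilinearity expands $(\Omega^n_{j,k})_b(v_1,\dots,v_{j+k})$ into a sum of evaluations each having a repeated argument, and antisymmetry forces every such term to vanish. I expect the only genuine obstacle to be notational, namely committing to the precise meaning of the pairing of a tensor in $\Lambda^kT_\Z^*B\otimes\Lambda^jT^*B$ with $j+k$ vectors together with the identification $e_i^*=dx_i$; the hypothesis $j+k\le n$ ensures that the indices in each wedge are distinct, so the decomposable covectors above are well-formed and the argument goes through.
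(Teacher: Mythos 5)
Your overall strategy—showing that, evaluated pointwise, $\Omega^n_{j,k}$ acts as an ordinary alternating $(j+k)$-covector and then invoking the standard vanishing on dependent tuples—is sound and genuinely different from the paper's argument (the paper instead reduces, using multilinearity and alternation within each block, to the case $v_{k+l}=a_1v_1$ and then cancels terms in pairs by composing $\sigma$ with the transposition $(j_1\,s_l)$). However, as written your proof has a genuine gap at its central step: you declare that each summand $e^*_{j_1}\wedge\dots\wedge e^*_{j_k}\otimes dx_{s_1}\wedge\dots\wedge dx_{s_j}$, fed $j+k$ vectors, is computed as the fully antisymmetrized covector $dx_{j_1}\wedge\dots\wedge dx_{j_k}\wedge dx_{s_1}\wedge\dots\wedge dx_{s_j}$. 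That is not the evaluation convention the proposition uses. The pairing here is the tensor pairing: the $\Lambda^kT^*_\Z B$ factor contracts the first $k$ vectors and the $dx$ factor the last $j$, with no antisymmetrization across the two blocks—this is visible in the paper's own displayed expansion (a product of a $k\times k$ and a $j\times j$ determinant per summand), and it is forced by the application in Corollary \ref{determinantal_form_vanishes}, where the coefficient slot is filled by framing vectors from $\beta(\sigma)\in\Lambda^kT_\Z B$ and the form slot by vectors tangent to the cell. Under this convention a single summand is \emph{not} alternating in all $j+k$ arguments, so your per-summand collapse is false, and the proposition is precisely not the tautology your convention makes it.

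The good news is that your reduction can be repaired, and the repair exposes where the real content lies: for each fixed tuple $i_1<\dots<i_{j+k}$, the \emph{signed sum over all $k$-subsets} $\{j_1<\dots<j_k\}\subset\{i_1,\dots,i_{j+k}\}$ of the products of block determinants equals the single $(j+k)\times(j+k)$ determinant $(dx_{i_1}\wedge\dots\wedge dx_{i_{j+k}})(v_1,\dots,v_{j+k})$—this is exactly the generalized Laplace expansion along the first $k$ rows, with $(-1)^{sgn(\sigma)}$ supplying the cofactor signs. Hence $(\Omega^n_{j,k})_b$ evaluates as $\sum_{i_1<\dots<i_{j+k}}dx_{i_1}\wedge\dots\wedge dx_{i_{j+k}}$ with coefficient $1$, not $\binom{j+k}{k}$; your binomial factor is a symptom of the misidentified pairing, and it contradicts the paper's remark immediately after the proposition that for $n=j+k$ the evaluation is $vol(v_1,\dots,v_n)$ (a determinant, not $\binom{n}{k}$ times one). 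With the Laplace step supplied in place of the convention declaration, your argument closes correctly and arguably yields a cleaner proof than the paper's explicit transposition-pairing cancellation.
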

\begin{proof}
	Let $J$ denote the vector space generated by the last $j$ vectors. Assume one of the last $j$ vectors $v_{k+l}$ is a linear combination of the other $j+k-1$ vectors. By the antisymmetry properties of $\Omega^n_{j,k}$, $$(\Omega^n_{j,k})_b(v_1,\dots,v_{j+l},\dots, v_{j+k})=(\Omega^n_{j,k})_b(v_1,\dots,v_{j+l}-\pi_{J}(v_{j+l}),\dots, v_{j+k}).$$
	This means that, without loss of generality, we can assume $v_{k+l}$ is written as a linear combination of the first $k$ vectors: $$v_{k+l}=\sum_{r=1}^k a_r v_r$$ for some real numbers $a_1,\dots,a_k$. For the same reason, by replacing say $v_1$ by $\frac{1}{a_1}v_{k+l}$, one can assume $v_{k+l}=a_1v_1$ is a multiple of $v_1$.
	To show the vanishing of $(\Omega^n_{j,k})_b(v_1,\dots,v_{j+k})$, one writes down explicitly
	\begin{align}
		(\Omega^n_{j,k})_b(v_1,\dots,v_{j+k})=\sum_{(i_1<\dots<i_{j+k})\in\{1,\dots,n\}^{j+k}\atop(j_1<\dots<j_k)\in\{i_1,\dots,i_{j+k}\}^{k}}& (-1)^{sgn(\sigma)} \left(\sum_{\mu\in S_k}(-1)^{sgn(\mu)}v_{1,\mu(j_1)}\dots v_{k,\mu(j_k)}\right)\times \notag \\
		&\left(\sum_{\nu\in S_j}(-1)^{sgn(\nu)}v_{k+1,\nu(s_1)}\dots v_{k+j,\nu(s_j)}\right). \notag
	\end{align} 
	Recall that $\sigma$ appears in the definition of $\Omega^n_{j,k}$ and is the permutation taking $\{i_1,\dots,i_{j+k}\}$ to $\{j_1,\dots,j_k,s_1,\dots,s_j\}$. When replacing $v_{k+l}$ by $a_1v_1$, one can see that for every term, composing $\sigma$ with the transposition $(j_1 s_l)$ yields a term equal in absolute value but with opposite sign, hence they cancel out in pairs.
	The same argument can be carried out by taking one of the first $k$ and assuming it is written as a linear combination of the last $j$ vectors.
\end{proof}

It follows that, in the special case where $n=j+k$, given any point $b\in B$ and $n$ tangent vectors $v_1,\dots,v_n\in T_bB$, $$(\Omega^n_{j,k})_b(v_1,\dots,v_n)=vol(v_1,\dots,v_n).$$ 

\begin{corollary}\label{determinantal_form_vanishes}
	Assume $Z$ is an algebraically trivial $k$-cycle for $k\geq 1$. Then  $$\int_{\gamma}\Omega^n_{k+1,k}\in \mathcal{P}^n_{k+1,k}$$ for any $\gamma\in C_{k+1}(B;\Lambda^k T_\Z B)$ satisfying $\partial\gamma=\overline{Z}$.
\end{corollary}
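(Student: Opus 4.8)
The plan is to reduce the statement to the concrete tropical chain construction of Section \ref{construction_tropical_chain_equivalence}, so that the vanishing integral becomes a direct consequence of Proposition \ref{pre_determinantal_form_vanishes}. First I would unwind the definition of algebraic triviality: $Z$ algebraically trivial means $Z$ is algebraically equivalent to $0$, so there is a tropical curve $C$ with two marked points $c_1, c_2$ and a tropical $(k+1)$-cycle $W \subset B \times C$ witnessing $\pi_*(W\cap(B\times\{c_1\}) - W\cap(B\times\{c_2\})) = Z$. Applying the Lemma of Zharkov \cite[Lemma 4]{zharkov_tropical_ceresa} to this $W$ produces an explicit $(k+1)$-chain $\tilde{W} \in C_{k+1}(B;\Lambda^k T_\Z B)$ with $\partial\tilde{W} = \overline{Z}$. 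Since the integral $\int_\gamma \Omega^n_{k+1,k}$ changes only by a period of $\Omega^n_{k+1,k}$ under different choices of bounding chain $\gamma$ (both $\gamma$ and $\tilde W$ bound $\overline Z$, so $\gamma - \tilde W$ is a cycle, and its pairing with the closed form $\Omega^n_{k+1,k}$ lies in $\mathcal{P}^n_{k+1,k}$ by definition of the period lattice), it suffices to show that $\int_{\tilde W}\Omega^n_{k+1,k} = 0$.

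The key observation is how $\tilde{W}$ is assembled. Its nonzero contributions come only from those $(k+1)$-cells $\sigma$ of $W_P$ on which the projection $\pi':B\times C \to C$ is submersive; on all other cells the coefficient $\beta(\sigma)$ is set to zero. On a submersive cell, $\sigma$ fibers over a segment of the path $P$ in $C$, and its image $\pi(\sigma)$ in $B$ is swept out by the $k$-dimensional tautological framing $\beta(\sigma)$ of the slice together with one extra direction coming from moving along $C$. I would make precise that the tangent space to $\pi(\sigma)$ at a generic point is spanned by the $k$ tangent vectors $v_1,\dots,v_k$ underlying the tautological framing $\beta(\sigma) = v_1\wedge\dots\wedge v_k$, plus at most one additional ``flow'' direction $w$ transverse to the slice. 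Thus $\pi(\sigma)$ is genuinely $(k+1)$-dimensional, and when we evaluate the integrand $\Omega^n_{k+1,k}$ over $\sigma$, the coefficient $\beta(\sigma) \in \Lambda^k T_\Z B$ contracts the $\Lambda^k T^*_\Z B$-part of the form, leaving the $(k+1)$-form $dx$-part to be integrated against the $(k+1)$ tangent directions $v_1,\dots,v_k,w$.

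The crux is then a linear-algebra vanishing. After contracting $\beta(\sigma) = v_1\wedge\dots\wedge v_k$ against the form, the surviving differential-form factor, evaluated on the tangent vectors $v_1,\dots,v_k,w$ of $\pi(\sigma)$, is precisely $(\Omega^n_{k+1,k})_b(v_1,\dots,v_k,w)$ with the same $v_1,\dots,v_k$ appearing \emph{both} as the framing coefficients and as tangent directions of the cell. Because these $v_i$ occur in both the $T_\Z B$ coefficient and among the arguments, Proposition \ref{pre_determinantal_form_vanishes} — which asserts that $\Omega^n_{j,k}$ vanishes on any linearly dependent collection of vectors — forces each cell-wise integrand to vanish pointwise: the antisymmetry of the form in its coefficient-arguments means a repeated direction kills the term. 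Summing over the cells $\sigma$ of $W_P$, every contribution is zero, so $\int_{\tilde W}\Omega^n_{k+1,k} = 0$ and hence $\int_\gamma \Omega^n_{k+1,k} \in \mathcal{P}^n_{k+1,k}$.

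The main obstacle I anticipate is making rigorous the claim that the tangent directions of each submersive cell $\pi(\sigma)$ are spanned by exactly the framing vectors $v_1,\dots,v_k$ together with a single transverse flow direction, and that the contraction of $\beta(\sigma)$ with $\Omega^n_{k+1,k}$ produces an integrand of the form $(\Omega^n_{k+1,k})_b$ evaluated on a vector collection containing a repetition. This requires carefully tracking how $\pi$ is linear on each cell and how the tautological framing is pulled back along $\pi$, and identifying the correct geometric meaning of ``submersive'' so that the extra $C$-direction is genuinely transverse to the slice but projects into the linear span already recorded by $\beta$. Once this geometric picture is pinned down, the vanishing is an immediate corollary of Proposition \ref{pre_determinantal_form_vanishes}.
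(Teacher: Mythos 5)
Your proposal is correct and takes essentially the same approach as the paper: both reduce, via Zharkov's Lemma, to showing $\int_{\tilde W}\Omega^n_{k+1,k}=0$ for the chain $\tilde W$ of Section \ref{construction_tropical_chain_equivalence}, and both deduce the pointwise vanishing from Proposition \ref{pre_determinantal_form_vanishes}, since the framing vectors of $\beta(\sigma)$ reappear among the tangent directions of each $(k+1)$-cell, so the evaluated collection of vectors is linearly dependent. The geometric point you flag as the main obstacle (the tangent space of $\pi(\sigma)$ being spanned by the framing plus one transverse direction) is precisely the observation the paper uses, stated there in the slightly coarser form that the tautological framing lives inside the tangent space of the cell.
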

\begin{proof}
	Let $W$ be an algebraic equivalence of $k$-cycles $Z_1$ and $Z_2$ for $k\geq1$, and $\tilde{W}$ its associated $(k+1)$-chain in $C_{k+1}(B;\Lambda^k T_\Z B)$ constructed in Section \ref{construction_tropical_chain_equivalence}. Then it suffices to show $$\int_{\tilde{W}}\Omega^n_{k+1,k}=0.$$
	To compute this integral we locally pair $\Omega^n_{j,k}$ on each $k+1$-cell $\sigma$ in the support of $\tilde{W}$ with $k$ vectors coming from the framing $\beta(\sigma)$ and $k+1$ vectors tangent to $\sigma$. Notice that by construction the framing $\beta(\sigma)\in\Lambda^k T_\Z B$ is the pullback of a tautological framing on a $k$-cell. Of course a tautological framing is written as a wedge product of vectors living inside the tangent space to the $k$-cell, therefore they are clearly not linearly independent, and the integral vanishes.
	Notice that, if $k=0$, the $\beta(\sigma)$ are simply integers hence the linear dependence argument cannot be carried out.  
\end{proof}

\begin{remark}\label{determinantal_form_vanishes_locally}
	We have actually proven a statement stronger than $$\int_{\tilde{W}}\Omega^n_{k+1,k}=0,$$  namely that \enquote{$\Omega^n_{k+1,k}\vert_{\tilde{W}}=0$}, in the sense that $\Omega^n_{k+1,k}$ evaluates to zero at each point of $\tilde{W}$. 
\end{remark}

\begin{remark}\label{no_trop_flux_0_cycles}
	We have mentioned that this result does not hold for $k=0$; the argument of linear dependence used for the vanishing result (Corollary \ref{determinantal_form_vanishes}) cannot be used for points. As a counterexample, consider the case $n=1$ where $B\cong S^1$ with two points $p$ and $q$ as shown below. In this setting $\Omega^1_{1,0}=dx \in C^0(B;T_\Z^*B)$. 
	\begin{figure}[h!]
		\centering
		\includegraphics[width=1.2in]{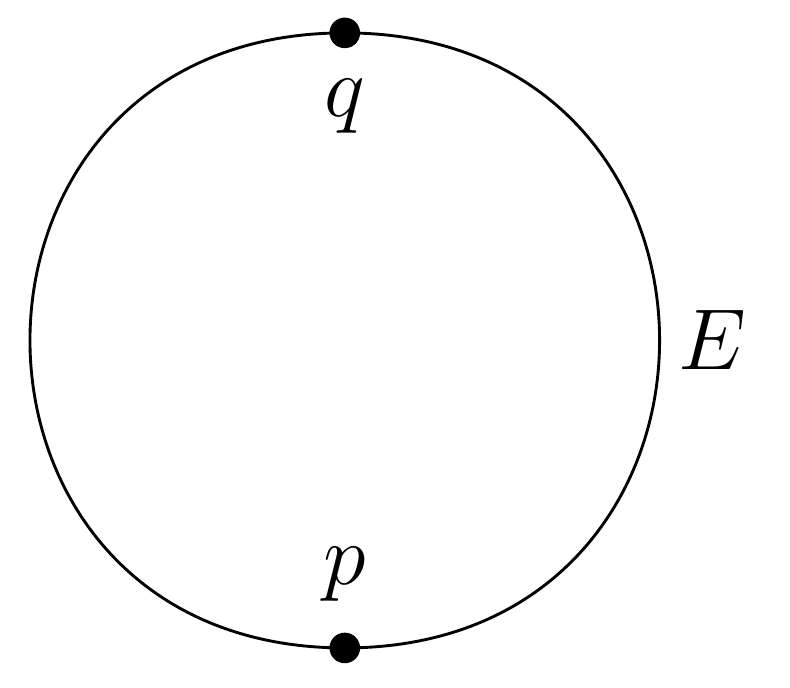} 
	\end{figure}
	These points are algebraically equivalent by the diagonal $\Delta$ in $S^1\times S^1$, and the associated chain $\tilde{\Delta}\in C_1(S^1;\Z)$ is the edge $E$, therefore $$\int_{\tilde{\Delta}}\Omega^1_{1,0}=l(E)\neq 0.$$
\end{remark}

\section{(Oriented) Lagrangian cobordisms and flux}\label{symplectic_flux}

The notion of Lagrangian cobordism was originally introduced by Arnold in \cite{Arnold_Lag_Leg_cobordisms}.

\begin{definition}\label{lag_cobordism}
	Two finite sets of Lagrangians $(L_1,\dots,L_{k^+})$ and $(L'_1,\dots,L'_{k^-})$ in $M$ are \textit{Lagrangian cobordant} if there exists a Lagrangian $V$ in $M\times\C$ and a compact subset $K\subset M\times\C$ such that
	$$V\backslash K \cong \bigsqcup_{i=1}^{k^+}L_i\times(-\infty,-1]\times\{x_i\}\sqcup \bigsqcup_{j=1}^{k^-}L'_j\times[1,+\infty)\times\{x'_j\}$$ for pairwise distinct real numbers $\{x_i\}$ and $\{x'_j\}$.
	The \textit{Lagrangian cobordism group} of $M$, $\Cob(M)$, is then $$\Cob(M):=\bigoplus_{L\in M}\Z\cdot L/\sim,$$ where $\sim$ denotes the equivalence relations defined by $$L_1+\dots+L_{k^+}\sim L'_1+\dots+L'_{k^-}$$ if there is a Lagrangian cobordism between the finite sets $(L_1,\dots,L_{k^+})$ and $(L'_1,\dots,L'_{k^-})$. 
\end{definition}
This definition can be modified to fit the desired set-up by adding assumptions on the finite sets of Lagrangians and the cobordisms themselves with a compatibility condition on the ends; in this way one can upgrade Lagrangians to Lagrangian \textit{branes}, require exactness, monotonicity, etc. Most relevant to our considerations is the \textit{oriented} Lagrangian cobordism group, which we denote by $\Cob^{or}(M)$. 

A slight variation of Lagrangian cobordisms are \textit{cylindrical} Lagrangian cobordisms, introduced by Sheridan--Smith in \cite{nick_ivan1}. These differ from Definition \ref{lag_cobordism} in that $V$ is a Lagrangian in $M\times\C^*$, which outside of a compact subset in $\C^*$ carries trivially fibered finite sets of Lagrangians in $M$ over radial rays. This variation appears naturally from mirror symmetry, where cylindrical cobordisms provide the more natural mirror to rational equivalence. 

\begin{example}\label{suspension_cobordism}
	Let $L\subset M$ be a Lagrangian, and $h\in\mathcal{C}^\infty(M\times[0,1];\R)$ a Hamiltonian on $M$. Then $L$ and $\phi^1(L)$ are Lagrangian cobordant, where $\phi^t$ is the time $t$-flow of the Hamiltonian vector field associated to $h$. This can be realised by the \textit{Hamiltonian suspension cobordism}
	\begin{align}
		V: &\hspace{1mm}L\times[0,1]\longrightarrow M\times\C \notag \\
			&\hspace{4mm}(x,t)\mapsto (\phi^t(x),t+ih(x,t)), \notag
	\end{align}
	which is naturally oriented if $L$ is. 
\end{example}

\begin{example}\label{surgery_trace_cobordism}
	Let $L$ and $L'$ be two Lagrangians intersecting transversally at a point $p= L\pitchfork L'$, and denote by $L\#_pL'$ the resulting Polterovich surgery. There exists a \textit{surgery trace cobordism} $$V:(L,L')\rightsquigarrow L\#_p L',$$ which is constructed in detail in \cite[Section 6.1]{bc1}. More involved cobordisms can be constructed from general Lagrangian $k$-(anti)surgeries, see \cite{haug_antisurgery}.
\end{example}

If $dim(M)=2n$ and when one restricts to \textit{oriented} Lagrangians, there is a \textit{cycle class map} \begin{align}
	cyc: \hspace{2mm}\Cob^{or}(M)\longrightarrow& H_n(M;\Z) \notag \\
	L_1+\dots+L_k\mapsto& [L_1]+\dots+[L_k], \notag
\end{align}
from which one can define the \textit{homologically trivial subgroup} $\Cob^{or}(M;\Z)_0$  appearing in the exact sequence
\begin{equation}\label{ses_cyc_class_map}
	\Cob(M)^{or}_0\hookrightarrow\Cob^{or}(M)\longrightarrow H_n(M;\Z). \notag
\end{equation}
While proving the \textit{existence} of a cobordism usually relies on exhibiting Lagrangian surgeries or Hamiltonian isotopies between Lagrangians, \textit{obstructions} to their existence in the oriented case can be obtained by the presence of some \textit{symplectic flux} between homologically equivalent Lagrangians. This is the strategy employed by Arnold and Haug in \cite{Arnold_Lag_Leg_cobordisms,haug_T2} to compute $\Cob(T^*S^1)^{or}$ and $\Cob(T^2)^{or}$, respectively.

Classically, one can associate a flux $\flux(\iota)\in H^1(L;\R)$ to a Lagrangian isotopy $\iota:L\times[0,1]\rightarrow M$ $$\flux(\iota)([\gamma]):=\int_{S^1\times[0,1]}(\iota\circ(\gamma\times\Id_{[0,1]}))^*\w,$$ which is geometrically the area swept by $\gamma$ under the isotopy. In our setting, we refer to a \textit{flux} between homologically equivalent Lagrangians $L$ and $L'$ as integrals of certain closed differential forms over an $(n+1)$ chain $\gamma$ with $\partial\gamma=L-L'$, modulo the periods of the form. They yield obstructions to the existence of oriented Lagrangian cobordisms between $L$ and $L'$, as the following Lemma illustrates: 

\begin{lemma}\label{cob_implies_periods}
	Let $L$ and $L'$ be oriented Lagrangians in $M^{2n}$ with $L=L'$ in $\Cob^{or}(M)$. Consider a smooth $(n+1)$-chain $\gamma$ in $M$ satisfying $\partial\gamma=L-L'$. Then, for any closed $(n-1)$-form $\alpha$ on $M$, $$\int_\gamma\alpha\wedge\w$$ lies in the periods of $\alpha\wedge\w$.
\end{lemma}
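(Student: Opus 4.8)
The plan is to produce one explicit $(n+1)$-chain bounding $L-L'$ directly from a Lagrangian cobordism realising $L=L'$ in $\Cob^{or}(M)$, to show that the integral of $\alpha\wedge\w$ over it vanishes, and then to invoke well-definedness modulo periods to conclude. First I would record the standard reduction: if $\gamma,\gamma'$ are two smooth $(n+1)$-chains with $\partial\gamma=\partial\gamma'=L-L'$, then $\gamma-\gamma'$ is an integral $(n+1)$-cycle, so since $\alpha\wedge\w$ is closed (as $\alpha$ is closed and $\w$ is symplectic), $\int_\gamma\alpha\wedge\w-\int_{\gamma'}\alpha\wedge\w=\int_{\gamma-\gamma'}\alpha\wedge\w$ is a period of $\alpha\wedge\w$. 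Hence it suffices to exhibit a \emph{single} convenient chain for which the integral is a period, and I will in fact arrange for it to be exactly $0$.

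For the construction, let $V\subset(M\times\C,\w\oplus\w_0)$ be a Lagrangian cobordism realising the relation, where $\w_0$ is the standard form on $\C$; by additivity of the construction below I would reduce to a single elementary cobordism, the auxiliary ends cancelling telescopically so that only $L-L'$ survives on the boundary. Writing $z=x+iy$ on $\C$ and truncating to $V_T:=V\cap\{\,|x|\le T\,\}$ for $T$ large, the boundary $\partial V_T$ lies entirely in the cylindrical region, where $V$ is a disjoint union of $L_i\times\{t+ix_i\}$ and $L'_j\times\{t+ix'_j\}$. Projecting under $\pi_M\colon M\times\C\to M$, I set $\gamma:=(\pi_M)_*V_T$, a smooth $(n+1)$-chain (or current) with $\partial\gamma=(\pi_M)_*\partial V_T=L-L'$, since each end maps diffeomorphically onto the corresponding Lagrangian in $M$.

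The computation is then short. By the projection formula, $\int_\gamma\alpha\wedge\w=\int_{V_T}\pi_M^*(\alpha\wedge\w)$. Because $V$ is Lagrangian, $(\w\oplus\w_0)|_V=0$, i.e.\ $\pi_M^*\w|_{V_T}=-\pi_\C^*\w_0|_{V_T}$, so $\int_{V_T}\pi_M^*(\alpha\wedge\w)=-\int_{V_T}\pi_M^*\alpha\wedge\pi_\C^*\w_0$. Choosing the primitive $\lambda:=x\,dy$ with $d\lambda=\w_0$, and using $d\alpha=0$, one gets $\pi_M^*\alpha\wedge\pi_\C^*\w_0=(-1)^{n-1}\,d(\pi_M^*\alpha\wedge\pi_\C^*\lambda)$, whence Stokes reduces the integral to $\int_{\partial V_T}\pi_M^*\alpha\wedge\pi_\C^*\lambda$. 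The crucial geometric point is that $\partial V_T$ sits on the cylindrical ends, where the imaginary coordinate $y$ is \emph{constant}, so $\pi_\C^*\lambda=x\,dy$ restricts to zero there; the boundary integral vanishes and therefore $\int_\gamma\alpha\wedge\w=0\in\mathcal{P}(\alpha\wedge\w)$.

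The main obstacle is organisational rather than conceptual. On the one hand, $L=L'$ in $\Cob^{or}(M)$ is only \emph{generated} by cobordisms that may carry several ends, so I must set up the truncation-and-projection construction additively and check that the extra ends cancel telescopically, leaving precisely $\partial\gamma=L-L'$ (its nullhomologous-ness being guaranteed by the cycle class map). On the other hand, since $\pi_M|_{V_T}$ need not be an embedding, I must justify treating $(\pi_M)_*V_T$ as a chain/current against which the projection formula and Stokes' theorem are valid; this is routine but deserves a sentence. The only place the Lagrangian hypothesis and the cylindrical structure actually enter is the vanishing of the chosen primitive $\lambda$ on the ends, and this is exactly what forces the flux to lie in the period group.
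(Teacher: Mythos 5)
Your proof is correct, but it runs along a genuinely different route from the paper's. The paper keeps the given chain $\gamma$ throughout: it modifies the ends of the cobordism so that they all terminate over a single fibre $\{z\}\subset\C$ (at the cost of immersedness), caps off with $\gamma\times\{z\}$ to obtain a \emph{closed} $(n+1)$-cycle in $M\times\C$, and then evaluates the closed form $\pi_M^*\alpha\wedge(\pi_M^*\w+\pi_\C^*\w_\C)$ on this cycle in two ways — once using the strong deformation retraction of $\C$ onto $\{z\}$ to identify the integral with a period of $\alpha\wedge\w$ in $M$, and once using the Lagrangian condition to kill the contribution of the cobordism, leaving $\int_\gamma\alpha\wedge\w$. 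You instead normalise: you build one preferred bounding chain $(\pi_M)_*V_T$ by truncating and projecting the cobordism, show its flux is exactly zero via Stokes with the explicit primitive $\lambda=x\,dy$ (which vanishes on the horizontal ends, where $y$ is constant), and then invoke closedness of $\alpha\wedge\w$ to compare with an arbitrary $\gamma$ modulo periods. Your sign conventions, the identity $\pi_M^*\w|_V=-\pi_\C^*\w_0|_V$, and the graded Leibniz computation are all correct, and your telescoping treatment of multi-ended relations is, if anything, slightly more careful than the paper's consolidation into a single cobordism with common auxiliary ends; your currents/pushforward caveat matches the paper's implicit use of immersed chains. What each approach buys: yours gives the sharper statement that the projected cobordism chain has flux exactly $0$ (not merely a period) and is the classical Arnold--Haug-style computation, avoiding the end-modification step; the paper's argument is primitive-free, and its cycle-plus-retraction template is the one that is reused in elaborated form in the proof of Theorem \ref{algcob_flux_theorem} and that signals the correct modification in the cylindrical case — note that for cobordisms in $M\times\C^*$ your choice of a global primitive of the area form breaks down ($\C^*$ is not contractible), exactly mirroring the paper's remark that the conclusion there weakens to periods of the product form.
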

\begin{proof}
	The equality $L=L'$ in $\Cob(M)$ means there exists Lagrangians $L_1,\dots,L_k$ and a Lagrangian cobordism  $V\subset M\times\C$ betweeen $(L_1,\dots,L_j,L,L_{j+1},\dots,L_k)$ and $(L_1,\dots,L_i,L',L_{i+1},\dots,L_k)$. At the cost of making $V$ immersed, one can modify its cylindrical ends so that they terminate over a single fibre $\{z\}\in\C$. Denote this modified Lagrangian by $\tilde{V}$. Then $\tilde{V}\cup(\gamma\times\{z\})$ is an $(n+1)$-cycle in $M\times\C$. Let $\pi_M:M\times\C\longrightarrow M$ and $\pi_\C:M\times\C\longrightarrow\C$ be projections onto the first and second factor respectively. Notice that, if $r:\C\rightarrow\{z\}$ is a strong deformation retraction, then
	\begin{align}
		\int_{\tilde{V}\cup(\gamma\times\{z\}))}\pi_M^*\alpha\wedge(\pi_M^*\w+\pi^*_\C\w_\C)&=\int_{(id_M\times r)_*(\tilde{V}\cup(\gamma\times\{z\}))}\pi_M^*\alpha\wedge(\pi_M^*\w+\pi^*_\C\w_\C) \notag \\
		&=\int_{(id_M\times r)_*(\tilde{V}\cup(\gamma\times\{z\}))}\pi_M^*(\alpha\wedge\w) \notag \\
		&=\int_{(\pi_M\circ(id_M\times r))_*\tilde{V}\cup\gamma}\alpha\wedge\w, \notag
	\end{align}
	which lies in the periods of $\alpha\wedge\w$. On the other hand, because $\tilde{V}$ is Lagrangian, 
	\begin{align}
		\int_{\tilde{V}\cup(\gamma\times\{z\}))}\pi_M^*\alpha\wedge(\pi_M^*\w+\pi^*_\C\w_\C)&=	\int_{\gamma\times\{z\}}\pi_M^*\alpha\wedge(\pi_M^*\w+\pi^*_\C\w_\C) \notag \\
		&=\int_{\gamma\times\{z\}}\pi_M^*(\alpha\wedge\w) \notag \\
		&=\int_{\gamma}\alpha\wedge\w. \notag
	\end{align}
\end{proof}

One can rephrase this Lemma by saying that, for any closed $(n-1)$-form $\alpha$ on $M$, there exists a \textit{flux-type morphism} 
\begin{align}\label{flux_type_morphism}
	\Theta_\alpha: & \hspace{3mm}\Cob^{or}(M)_0\longrightarrow \R/(\alpha\wedge\w)( H_{n+1}(M;\Z)) \\ &\hspace{10mm} L \hspace{8mm}\mapsto \hspace{10mm}\int_\gamma\alpha\wedge\w, \notag
\end{align}
where $\gamma$ is any smooth $(n+1)$-chain realising the homological equivalence $\partial\gamma=L$.

\begin{remark}
	While Lemma \ref{cob_implies_periods} does not hold for cylindrical Lagrangian cobordism groups because $\C^*$ is not contractible, flux  provides an obstruction in this case as well. If $L$ and $L'$ are cylindrically cobordant, $\int_\gamma\alpha$ is necessarily a period of $\pi_M^*\alpha\wedge(\pi_M^*\w+\pi^*_{\C^*}\w_{\C^*})$, another discrete lattice in $\R$.
\end{remark}

As an example, we apply Lemma \ref{cob_implies_periods} to obstruct the existence of oriented Lagrangian cobordisms in cotangent bundles of tori: 

\begin{proposition}\label{cotangent_torus_flux}
	Let $T$ be an $n$-torus, and $\eta$ be a closed $1$-form on $T$. Let $T_0$ be the zero section, and $T_\eta$ the graph of $\eta$. Then $T_0=T_\eta$ in $\Cob^{or}(M)$ if and only if $\eta$ is exact.
\end{proposition}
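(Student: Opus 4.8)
Throughout, set $M = T^*T$ with its canonical symplectic form $\w = \sum_i dp_i\wedge dq_i$, and let $\pi\colon T^*T\to T$ be the projection. The plan is to prove the two implications separately: the implication \emph{$\eta$ exact $\Rightarrow T_0 = T_\eta$} comes from an explicit Hamiltonian suspension, while the converse is an application of the flux obstruction of Lemma \ref{cob_implies_periods}.

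For the first implication, write $\eta = df$ with $f\in\mathcal{C}^\infty(T)$. I would check that the Hamiltonian $h := \pi^* f$ generates the flow $\phi^t(q,p) = (q, p + t\,df(q))$, whose time-one map carries the zero section $T_0$ onto the graph $T_{df} = T_\eta$. The Hamiltonian suspension cobordism of Example \ref{suspension_cobordism} then provides a Lagrangian cobordism between $T_0$ and $\phi^1(T_0) = T_\eta$, oriented because $T_0$ is; hence $T_0 = T_\eta$ in $\Cob^{or}(M)$.

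For the converse, suppose $T_0 = T_\eta$ in $\Cob^{or}(M)$. Both are sections of $\pi$ and are therefore homologous, so there is an $(n+1)$-chain $\gamma$ realising $\partial\gamma = T_0 - T_\eta$ and Lemma \ref{cob_implies_periods} applies. I would use the explicit trace $\gamma(q,t) = (q, t\,\eta(q))$ of the linear isotopy, and test against the forms $\alpha = \pi^*\beta$ where $\beta$ ranges over closed $(n-1)$-forms on $T$. A direct computation, using $d\eta = 0$, gives $\gamma^*\w = dt\wedge\eta$ and $\gamma^*\alpha = \beta$, so that
\begin{equation}
	\int_\gamma \alpha\wedge\w = (-1)^{n-1}\int_{T\times[0,1]} dt\wedge\beta\wedge\eta = (-1)^{n-1}\int_T \beta\wedge\eta. \notag
\end{equation}
By Lemma \ref{cob_implies_periods} this quantity lies in the periods of the closed $(n+1)$-form $\alpha\wedge\w$. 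But $T^*T$ deformation retracts onto the $n$-dimensional torus $T$, so $H^{n+1}(T^*T;\R) = 0$; thus $\alpha\wedge\w$ is exact and all of its periods vanish. Consequently $\int_T \beta\wedge\eta = 0$ for every closed $(n-1)$-form $\beta$ on $T$.

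To conclude, the cup-product pairing $H^1(T;\R)\times H^{n-1}(T;\R)\to\R$, $([\eta],[\beta])\mapsto\int_T\eta\wedge\beta$, is non-degenerate by Poincaré duality on $T$; the vanishing just obtained therefore forces $[\eta] = 0$, i.e.\ $\eta$ is exact. I expect the only genuinely delicate point to be the interplay in the converse direction: one must observe both that the relevant period lattice is trivial (because $H^{n+1}(T^*T) = 0$, so Lemma \ref{cob_implies_periods} yields an honest vanishing rather than a congruence) and that letting $\beta$ range over all of $H^{n-1}(T;\R)$ recovers the full class $[\eta]$ through Poincaré duality.
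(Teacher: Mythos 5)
Your proposal is correct and follows essentially the same route as the paper: the exact direction uses the oriented Hamiltonian suspension cobordism, and the converse uses the trace chain of the linear isotopy together with Lemma \ref{cob_implies_periods} and the vanishing of $H^{n+1}(T^*T;\R)$, which turns the period condition into an honest vanishing. The only difference is cosmetic: the paper tests against a single form $\w\wedge\phi^*vol$ Poincar\'e dual to a curve $Z$ with $\int_Z\eta\neq0$ (chosen via a splitting $T\cong S^1\times T^{n-1}$ and computed by fibrewise integration), whereas you let $\beta$ range over all closed $(n-1)$-forms and conclude by non-degeneracy of the cup-product pairing, which avoids the choice of splitting.
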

\begin{proof}
	Notice that if $\eta$ is exact, the Hamiltonian suspension cobordism from Example \ref{suspension_cobordism} between $T$ and $T_\eta$ is oriented.
	On the other hand, $\eta$ being non-exact implies the existence of a primitive class $z=(z_1,\dots,z_n)\in H_1(T;\Z)$ such that $\int_z\eta\neq0$.  Such a class can be represented by a cycle $Z$ given by a rational line in $[0,1]^n/\sim$. A choice of supplementary plane to $Z$ in $[0,1]^n/\sim$ yields a diffeomorphism $T\cong S^1\times T^{n-1}$, which in turn gives a symplectomorphism $$\phi:(T^*T,\w)\rightarrow (T^*(S^1\times T^{n-1}),\w_Z\oplus\w_{n-1}),$$ with $\w$, $\w_Z$, and $\w_{n-1}$ respectively denoting the standard symplectic forms on $T^*T$, $T^*S^1$, and $T^*T^{n-1}$. 
	
	Consider the $(n+1)$-area swept by the isotopy in $T^*T$, namely the locus $$\gamma:=\{(x,t\eta(x))\in T^*T:x\in T, t\in[0,1]\},$$ and its pushforward $$\phi_*\gamma=\{(x,t(\phi^{-1})^*\eta(x))\in T^*(Z\times T^{n-1})\},$$ which is just the $(n+1)$-area swept by the graph of the closed $1$-form $(\phi^{-1})^*\eta$ on $T^*S^1\times T^*T^{n-1}$.
	
	We wish to apply Lemma \ref{cob_implies_periods} by integrating over $\gamma$ the $(n+1)$-form $\w\wedge \phi^*vol\in\Omega^{n+1}(T^*T)$, where $vol\in\Omega^{n-1}(T^*(S^1\times T^{n-1}))$ is the pullback by the natural projection map $\pi:T^*(S^1\times T^{n-1})\rightarrow T^{n-1}$ of an integral volume form on $T^{n-1}$. Notice that, because $H^{n+1}(T^*T^n\times\C)$ is trivial, it is enough to show that this integral does not vanish. 
	\begin{align}
		\int_{\gamma}\w\wedge\phi^*vol&=\int_{\phi_*\gamma}(\w_Z\oplus\w_{n-1})\wedge vol \notag \\
		&=\int_{\phi_*\gamma}\w_Z\wedge vol. \notag
	\end{align}
	This last integral can be simplified as follows. Let $\tilde{\pi}:\phi_*\gamma\rightarrow T^{n-1}$ be the restriction to $\phi_*\gamma$ of the projection $\pi$ (which is surjective), and $y$ any point in $T^{n-1}$. Each fibre $$\tilde{\pi}^{-1}(y)=\{(s,y,t(\phi^{-1})^*\eta(s,y)):s\in Z, t\in[0,1]\}$$ is a compact cylinder parametrised by $s\in Z\simeq S^1$ and $t\in[0,1]$.  Applying fibrewise integration we get
	\begin{align}
		\int_{\phi_*\gamma}\w_Z\wedge vol&=\int_{T^{n-1}}\left(\int_{\tilde{\pi}^{-1}(y)}\w_Z \right)vol. \notag
	\end{align}
	Notice now that $\int_{\tilde{\pi}^{-1}(y)}\w_Z=\int_Z\eta$ is independent of $y\in T^{n-1}$, therefore $$\int_{\gamma}\w\wedge\phi^*vol=vol(T^{n-1})\int_Z\eta\neq0.$$
\end{proof}

\begin{remark}\label{global_tori_rationality_caveat}
	The result above does not hold for a general embedded Lagrangian torus $\iota:T^n\hookrightarrow M^{2n}$  in any symplectic manifold. By requiring that $$\iota^*:H^1(M;\R)\longrightarrow H^1(T^n;\R)$$ is onto, one can always find a non-zero integral over an $(n+1)$-chain bounding the Lagrangians $T^n$ and $T^n_\eta$ for some non-exact $\eta\in H^1(T^n;\R)$, as constructed in the proof of Lemma \ref{cotangent_torus_flux}. However, this no longer translates to a non-zero flux because one might have $H_{n+1}(M;\Z)\neq0$. 
	Counterexamples appear in recent work by Chassé-Leclercq \cite{chassé2024weinsteinexactnessnearbylagrangians}. They construct Lagrangian tori in any symplectic manifold of dimension $\geq6$ which admit arbitrarily Hausdorff-close disjoint elements in their Hamiltonian orbit. This follows from the characterisation of such orbits for product tori in $\C^n$ by Chekanov in \cite{Chekanov1996LagrangianTI}. Crucially, these Lagrangian tori are not \textit{rational} in the sense that $\w(H_2(M,L))\neq t\Z$ for any $t\geq0$.
	In dimension four, Brendel--Kim exhibit Hamiltonian isotopic Lagrangian tori $\iota:T\hookrightarrow (S^2\times S^2,\w\oplus t\w)$ for $t\in\R$ and $\iota':T'\hookrightarrow (S^2\times S^2,\w\oplus t\w)$ such that $T'\cong T_\eta$ for $\eta\neq0\in H^1(T;\R)$ in a Weinstein neighbourhood of $T$ \cite{brendel_kim}. 
\end{remark}

It is worth comparing this result to a result by Hicks--Mak \cite[Corollary 4.2]{hicks_mak_cute}, who prove that any two oriented Lagrangians which are Lagrangian isotopic are Lagrangian cobordant. In particular, in the setting of the Lemma \ref{cotangent_torus_flux}, $T_\eta$ is Lagrangian cobordant to $T_0$ for \textit{any} closed $1$-form $\eta$ on $T$, and our orientation assumption on the cobordism is crucial. In the remaining paragraphs of this section, we expand on why this extra rigidity is in fact desired from the perspective of mirror symmetry. 

In the interest of studying the Fukaya category of a symplectic manifold, one might want to consider \textit{Lagrangian branes}. These are Lagrangians equipped with a \textit{grading} in order for their morphism spaces to be $\Z$-graded, as well as (twisted) Pin structures which are used to orient the moduli spaces defining the $A_\infty$ operations, and are therefore required unless one is content to work with $\Z_2$-coefficients. 

The important implication here is that a graded Lagrangian automatically possesses a \textit{relative} orientation with respect to some local system $\xi\subset \Lambda^n_\C TM$. Details can be found in \cite[Section 11j]{seidel_book} (recall that defining such gradings required the assumption $2c_1(M)=0$). When furthermore $c_1(M)=0$, in particular for the tori we are concerned with in this paper, one can choose this local system to be trivial. Therefore these relative orientations are in fact honest orientations on the Lagrangians, and cobordisms of Lagrangian branes should preserve these orientations. 

\begin{remark}
	Of course in the graded setting but with $c_1(M)$ $2$-torsion, one would still have a \textit{relative cycle class map}
	\begin{align}
		cyc_\xi: \Cob(M)\longrightarrow& H_n(M;\xi) \notag \\
		L_1+\dots+L_k&\mapsto [L_1]_\xi+\dots+[L_k]_\xi, \notag
	\end{align}
	and flux obstructions could be constructed analogously in this setting using the theory developped in \cite{local_coefficients_steenrod}, through a \enquote{local coefficient version} of the morphism in Equation \eqref{flux_type_morphism}. 
\end{remark}

\section{Algebraic Lagrangian cobordisms}\label{algebraic_Lagrangian_cobordisms}

This Section uses some of the tropical background reviewed in Sections \ref{tropical_tori} and \ref{tropical_curves_Jac_Ceresa} concerning tropical tori and tropical curves. 

\subsection{Motivation and definitions}\label{def_alg_lag_cob}

As advertised in the introduction, our motivation for defining algebraic Lagrangian cobordisms is to provide an equivalence relation on Lagrangians mirror to algebraic equivalence of cycles. Because we use the language of Lagrangian correspondences to define and study algebraic Lagrangian cobordisms, we work with immersed Lagrangians. 

Let $Y$ be an algebraic variety over $\C$, and $Z_0$, $Z_\infty$ two algebraic $k$-cycles in $Y$. The most frequent definition found in the literature is that $Z_0$ and $Z_\infty$ are said to be \textit{algebraically equivalent} if they are two fibres of a flat family over a curve. However, an equivalent definition replaces the curve by any smooth projective variety; this is the version found in \cite[§10.3]{fulton_intersection_theory}. The definition which is most relevant to us replaces the curve by an abelian variety. Equivalence of these three definitions was proved by A. Weil in \cite[Lemme 9]{weil_criteres_equivalence} (translated in \cite[§III.1]{abelian_varieties_lang}), and in modern algebro-geometric language by Achter--Casalaina-Martin--Vial in \cite{parameter_spaces_alg_equiv}. 

\begin{definition}\label{alg_equiv_new_def}
	The cycles $Z_0$ and $Z_\infty$ are said to be \textit{algebraically equivalent} if they are fibres over points $p,q\in B$ of a $(k+g)$-cycle $\Gamma$ in $Y\times B$, flat over $B$, where $B$ is an abelian variety of dimension $g$ over $\C$. 
\end{definition}

 \begin{remark}\label{bertini_algequiv_over_curve}
 	Notice that applying Bertini's Theorem enough times to $\Gamma$ using that $B$ is projective, one readily obtains $Z_0$ and $Z_\infty$ as fibres over $p$ and $q$ of a flat family over a curve between these two points. 
 \end{remark}

We call \textit{symplectic polarised torus} a symplectic manifold $X(B):=T^*B/T^*_\Z B$ where $B$ is a polarised tropical torus (tropical tori and polarisations thereof are introduced in Section \ref{tropical_background}).  Recall that a Lagrangian correspondence $\Gamma$ between symplectic manifolds $M_1$ and $M_2$ is a Lagrangian in $\overline{M_1}\times M_2$. Given a Lagrangian $L_1$ in $M_1$, the \textit{geometric composition} $\Gamma(L_1)$ of $\Gamma$ with $L_1$ is $\pi_{M_2}(\Gamma\cap(L_1\times M_2))\subset M_2$. Possibly after perturbing $\Gamma$ or $L_1$ by a Hamiltonian isotopy, the intersection can be made transverse, and in this case $\Gamma(L_1)$ is a immersed Lagrangian (see e.g. \cite{ww_quilted_floer_cohomology}[Lemma 2.0.5]).

Similarly as for Lagrangian cobordisms, one can choose a set of suitable Lagrangians on which to define the equivalence relation. Furthermore, if one endows these Lagrangians with additional decorations (e.g. orientation, local systems, etc), one should compatibly decorate the algebraic Lagrangian cobordism. Very generally, we will denote by $\mathcal{L}(M)$ a set of \textit{suitable (decorated) Lagrangians} in the symplectic manifold $M$. As we have motivated at the end of the previous section, for our purposes this will mean closed \textit{oriented} Lagrangians.

\begin{definition}\label{alg_lag_cob}
	Let $(L_1,\dots,L_k)$ and $(L_1',\dots,L_{k'}')$ be two finite sets of Lagrangians in $\mathcal{L}(M)$. They are said to be \textit{algebraically Lagrangian cobordant} if there exists a symplectic polarised torus $X(B):=T^*B/T^*_\Z B$ with points $p,q\in B$, and a Lagrangian correspondence $\Gamma\in\mathcal{L}(\overline{X(B)}\times M)$ such that 
	\begin{itemize}
		\item $\Gamma(F_p)=L_1+\dots+L_k$ and $\Gamma(F_q)=L'_1+\dots+L'_{k'}$, where $F_p$ and $F_q$ are the fibres in $X(B)$ at $p$ and $q$ respectively;
		\item $\pi_B(\Gamma)=B$, where the map $$\pi_{B}:M\times X(B)\longrightarrow B$$ is the composition of the projection onto the second factor $M\times X(B)\rightarrow X(B)$ with the natural projection to the base $X(B)\rightarrow B$. 
	\end{itemize}
	Then we call $\Gamma$ an \textit{algebraic Lagrangian cobordism}, or an \textit{algebraic Lagrangian correspondence} between $(L_1,\dots,L_k)$ and $(L_1',\dots,L_{k'}')$ and denote this $\Gamma:(L_1,\dots,L_k)\stackrel{X(B)_{p,q}}{\rightsquigarrow}(L_1',\dots,L_{k'}')$ or simply $\Gamma:(L_1,\dots,L_k)\stackrel{X(B)}{\rightsquigarrow}(L_1',\dots,L_{k'}')$.
	From this we define the \textit{algebraic Lagrangian cobordism group} of $M$ as $$\algcob(M):=\oplus_{L\in\mathcal{L}}\Z\cdot L/\sim,$$ where $\sim$ denotes the equivalence relation generated by algebraic Lagrangian cobordism. 
\end{definition}

We will focus our attention on the \textit{oriented} Lagrangian cobordism group, which we denote $\algcob^{or}(M)$. Here the Lagrangians considered are oriented, and any algebraic Lagrangian cobordism is also oriented with equalities of the type $\Gamma(F)=L$ respecting the orientation (notice this requires a choice of orientation for fibres $F$ of $X(B)$). 

\begin{proposition}\label{algcob_cyc_class}
	There is a well-defined cycle class map $$\algcob^{or}(M)\longrightarrow H_n(M;\Z).$$
\end{proposition}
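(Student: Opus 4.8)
The plan is to define the map on generators by $L\mapsto [L]\in H_n(M;\Z)$, where $[L]$ denotes the pushforward of the fundamental class of the oriented (possibly immersed) closed Lagrangian $L$, to extend $\Z$-linearly to $\bigoplus_{L}\Z\cdot L$, and then to verify that it annihilates the relations generating $\sim$. Since $\sim$ is generated by algebraic Lagrangian cobordisms and the map is a homomorphism by construction, it suffices to show that whenever $\Gamma:(L_1,\dots,L_k)\stackrel{X(B)}{\rightsquigarrow}(L_1',\dots,L_{k'}')$, one has $[L_1]+\dots+[L_k]=[L_1']+\dots+[L_{k'}']$ in $H_n(M;\Z)$. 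The immersed (rather than embedded) nature of the Lagrangians causes no difficulty here, since the fundamental class of a closed oriented immersed manifold pushes forward to a well-defined class in $H_n(M;\Z)$.

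The geometric input is that the fibres $F_p$ and $F_q$ are homologous in $X(B)$ and that the correspondence $\Gamma$ transports this relation to $M$. Writing $P:=\pi_B|_\Gamma:\Gamma\to B$ for the projection (which is proper, as $\Gamma$ is closed and $B$ compact), I first record the dimension count: with $\dim M=2n$ and $\dim B=g$ one has $\dim X(B)=2g$ and $\dim\Gamma=n+g$, so that each geometric composition $\Gamma(F_b)=\pi_M(\Gamma\cap(F_b\times M))$ is an $n$-dimensional immersed Lagrangian and the generic fibres of $P$ are $n$-dimensional. I then choose a smooth path $\eta:[0,1]\to B$ from $p$ to $q$, which exists since the torus $B$ is connected, and form the $(n+1)$-chain $\gamma:=\pi_M\big(P^{-1}(\eta([0,1]))\big)$ in $M$. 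The orientations of $\Gamma$ and of $B$ orient $P^{-1}(\eta([0,1]))$, and $\pi_M$ pushes it forward; its boundary is $P^{-1}(p)-P^{-1}(q)$, whose images under $\pi_M$ are exactly $\Gamma(F_p)$ and $\Gamma(F_q)$ with their chosen orientations. Hence $\partial\gamma=\Gamma(F_p)-\Gamma(F_q)=(L_1+\dots+L_k)-(L_1'+\dots+L_{k'}')$, exhibiting the two cycles as homologous.

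To make $P^{-1}(\eta([0,1]))$ a genuine oriented manifold-with-boundary with the stated boundary, I would invoke transversality. A direct tangent-space computation shows that $b$ is a regular value of $P$ precisely when $\Gamma$ meets $F_b\times M$ transversally, that is, precisely when the composition $\Gamma(F_b)$ is transverse; thus $p$ and $q$ are regular values by hypothesis. Choosing $\eta$ so that its interior is transverse to $P$ (a generic path, by Sard's theorem and parametric transversality, after perturbing $\Gamma$ by a Hamiltonian isotopy if necessary), the proper map $P$ restricts over $\eta([0,1])$ to an oriented cobordism between $P^{-1}(p)$ and $P^{-1}(q)$ inside $\Gamma$. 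Note that a Hamiltonian perturbation of $\Gamma$ only alters $\Gamma(F_p)$ and $\Gamma(F_q)$ by Lagrangian isotopies, hence preserves the classes $[\Gamma(F_p)]$ and $[\Gamma(F_q)]$, so we are free to perturb to arrange transversality over the interior of $\eta$.

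The main obstacle I expect is the simultaneous bookkeeping of orientations and genericity: one must confirm that the boundary of $\pi_M\big(P^{-1}(\eta([0,1]))\big)$ returns the two geometric compositions with their prescribed orientation conventions (in particular matching the sign conventions built into the equalities $\Gamma(F_p)=L_1+\dots+L_k$ and $\Gamma(F_q)=L_1'+\dots+L_{k'}'$) and produces no spurious boundary components from non-transverse fibres or from the multivalued structure of the immersion $\Gamma\looparrowright\overline{X(B)}\times M$. Once the oriented cobordism $P^{-1}(\eta([0,1]))\subset\Gamma$ is in place, the conclusion is immediate.
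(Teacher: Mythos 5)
Your proof is correct and follows essentially the same route as the paper: the paper's (two-line) argument likewise uses $\pi_B(\Gamma)=B$ to choose a path $P$ from $p$ to $q$, forms $\pi_B^{-1}(P([0,1]))\cap\Gamma$ after a perturbation making the intersection transverse, and obtains an $(n+1)$-chain with boundary $L-L'$. Your write-up simply fills in the transversality, orientation, and Sard-type genericity details that the paper leaves implicit.
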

\begin{proof}
	 Because $\pi_B(\Gamma)=B$, given any path $P:[0,1]\longrightarrow B$ between $p$ and $q$, one can consider $\pi^{-1}_B(P([0,1]))\cap\Gamma$. Perturbing $\pi^{-1}_B(P([0,1]))$ to make the intersection transverse yields a $(n+1)$-chain whose boundary is $L$ and $L'$.
\end{proof}

\begin{remark}\label{cob_implies_algcob}
	Finite sets of Lagrangians which are (cylindrically) cobordant are algebraically Lagrangian cobordant, consistently with the comparison between rational and algebraic equivalence. Recall that a planar cobordism always defines a cylindrical cobordism by quotienting $\C$ by a large imaginary translation, therefore without loss of generality we assume there is a cylindrical cobordism $V\subset M\times\C^*$ between the finite sets of Lagrangians $(L_1,\dots,L_k)$ and $(L_1',\dots, L'_{k'})$. Then there is an obvious algebraic Lagrangian cobordism between those finite sets, obtained by gluing two copies of $V\subset M\times\C^*$ as shown below. 
 \begin{figure}[htb]
	\centering
	\includegraphics[width=2.1in]{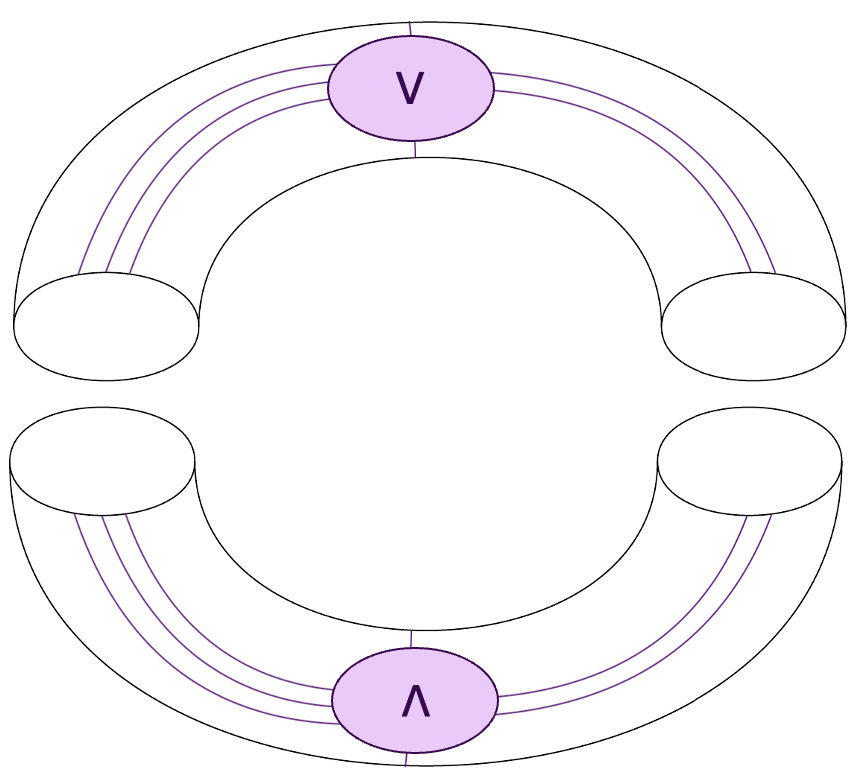} 
	\caption{Gluing cylindrical cobordisms into an algebraic Lagrangian cobordism.}
	\label{Lag_lift}
\end{figure}
	This can be seen as mirroring the fact that any rationally equivalent cycles are algebraically equivalent over an elliptic curve $E$ after a pullback by the double cover $$E\longrightarrow\proj^1.$$
	In particular, there is a natural projection $$\Cob(M)\stackrel{\pi}{\longrightarrow}\algcob(M),$$ and the cycle class maps from each of these groups coincide after factoring through $\pi$. 
\end{remark}

\begin{example}[Abelian varieties]
	Given a polarised tropical torus $B$, one can consider the oriented \textit{fibered} cobordism group $\Cob_{fib}(X(B))$, that is the group generated by fibres $F_b$ for $b\in B$ in which relations come from cobordisms all of whose ends are fibres. It follows from flux considerations that there cannot exist an oriented cobordism between $F_b$ and $F_{b'}$ for $b\neq b'$. In fact, Sheridan--Smith prove in \cite[Theorem 1.2]{nick_ivan1} that $\Cob_{fib}(X(B))_{hom}$  is divisible, which means that for every $L\in\Cob_{fib}(X(B))_{hom}$ and for any $k\in\mathbb{N}$, there exists some $L'\in\Cob_{fib}(X(B))_{hom}$ such that $L=k\cdot L'$\footnote{Note they work with \textit{cylindrical} Lagrangian cobordisms}. This result mirrors an algebraic result about divisibility of (tropical) Chow groups, namely \cite[Theorem 3.1]{bloch_cycles_on_abelian_varieties}, of which they prove a tropical version \cite[Proposition 3.25]{nick_ivan1}. On the other hand, the diagonal $\Delta\subset \overline{X(B)}\times X(B)$ provides an obvious oriented algebraic Lagrangian cobordism between any two fibres $F_b$ and $F_{b'}$, yielding $\algcob(X(B))_{fib}\cong\Z$. This is a symplectic manifestation of the fact that any two points on an abelian variety $A$ lie on a curve, hence $\Gr_0(A)\cong \Z$. Notice that while certain fluxes \textit{do} obstruct the existence of algebraic Lagrangian cobordisms, this specific type of flux between fibres does not. This is a manifestation of Remark \ref{no_trop_flux_0_cycles}. 
\end{example}

As noted in Remark \ref{bertini_algequiv_over_curve}, in the algebraic setting one can recover an algebraic equivalence of cycles over a curve from an algebraic equivalence of cycles over an abelian variety by considering the flat pullback to a curve between the two relevant points. We present a Lemma which is an analogous result for algebraic Lagrangian cobordisms:

\begin{lemma}\label{algcob_yields_Lagcorresp_over_curve}
	Let $\Gamma:L\stackrel{X(B)_{p,q}}{\longrightsquigarrow}L'$ be an algebraic Lagrangian cobordism. There exists a Lagrangian $\Gamma'$ in $M\times X(B)$ such that $\pi_B(\Gamma')\subset B$ is a tropical curve, and $\Gamma'(F_p)=L$, $\Gamma'(F_q)=L'$. 
\end{lemma}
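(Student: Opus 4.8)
The plan is to mimic the flat pullback of Remark~\ref{bertini_algequiv_over_curve}: choose a tropical curve $C\subset B$ through $p$ and $q$ and restrict $\Gamma$ over it. Writing $b=\dim B$, note first that since $B=\R^{b}/\Lambda$ is a tropical torus any two of its points are joined by a concatenation of segments of integral slope, and such a piecewise-linear path is the image of an embedded tropical curve $C$ with $p,q\in C$; I would take it generic with respect to $\Gamma$ and with $p,q$ in its smooth locus. The naive candidate is then $\Gamma\cap\pi_B^{-1}(C)$. As a subset of the Lagrangian $\Gamma$ it is automatically isotropic, it is supported over $C$, and --- because $F_p\times M\subset\pi_B^{-1}(C)$ --- its geometric composition with $F_p$ (resp.\ $F_q$) coincides with that of $\Gamma$ and hence returns $L$ (resp.\ $L'$).

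The difficulty is purely dimensional: a transverse $\Gamma\cap\pi_B^{-1}(C)$ has dimension $\tfrac12\dim M+1$, so it is middle-dimensional only when $b=1$. To produce an honest Lagrangian I would correct it by symplectic reduction. Writing $\pi\colon X(B)\to B$ for the torus fibration and $\Sigma:=\pi^{-1}(C)$, a computation in action--angle coordinates shows that $\Sigma$ is coisotropic with $(b-1)$-dimensional null foliation and that its symplectic reduction is $X(C):=T^{*}C/T_{\Z}^{*}C$; let $\rho\colon\Sigma\to X(C)$ denote the reduction, extended by the identity on $M$. Then $A:=\Gamma\cap(\Sigma\times M)$ is isotropic, and for generic $C$ the restriction $\rho|_{A}$ is an immersion whose image $\overline{\Gamma}:=\rho(A)\subset\overline{X(C)}\times M$ is an immersed Lagrangian, namely the algebraic Lagrangian cobordism \enquote{over the curve $C$}. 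I would then set $\Gamma':=\rho^{-1}(\overline{\Gamma})$, equivalently the saturation of $A$ by the null foliation, equivalently the composition of $\overline{\Gamma}$ with the Lagrangian reduction correspondence attached to $\Sigma$. A dimension count gives $\dim\Gamma'=\tfrac12\dim M+b=\tfrac12\dim\bigl(X(B)\times M\bigr)$, and since the preimage of a Lagrangian under symplectic reduction is Lagrangian, $\Gamma'$ is a Lagrangian in $M\times X(B)$ supported over $C=\pi_B(\Gamma')$.

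It remains to check the two boundary conditions for $\Gamma'$ itself. For this I would use that the torus fibre $F_p\subset\Sigma$ is exactly $\rho^{-1}(F_p^{C})$, where $F_p^{C}$ is the circle fibre of $X(C)\to C$ over $p$. Unwinding the definitions, $\Gamma'\cap(F_p\times M)=\rho^{-1}\bigl(\overline{\Gamma}\cap(F_p^{C}\times M)\bigr)$, and since $\pi_M\circ\rho=\pi_M$ while $\rho^{-1}$ only adjoins collapsing null-torus directions, projecting to $M$ returns $\overline{\Gamma}(F_p^{C})=\Gamma(F_p)=L$; the same argument at $q$ gives $\Gamma'(F_q)=L'$. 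The orientation of $\Gamma'$ is induced from that of $\Gamma$ together with a coorientation of $C$, so the construction remains within the oriented framework.

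I expect the main obstacle to be the genericity and cleanness, rather than anything formal. One must arrange --- perturbing $\Gamma$ by a Hamiltonian isotopy if necessary --- that $\Gamma\pitchfork(\Sigma\times M)$, that $\rho|_{A}$ is an immersion, and that the compositions at $p$ and $q$ stay clean. In particular $\Gamma'(F_p)$ is a clean but \emph{non-transverse} composition: the intersection $\Gamma'\cap(F_p\times M)$ carries the $(b-1)$-dimensional null tori that $\pi_M$ collapses, so one must argue that its image is still the immersed Lagrangian $L$ and not a lower-dimensional or singular set. The finitely many vertices of $C$, where $\Sigma$ is not smooth, are a minor separate point: I would keep $C$ smooth near $p$ and $q$ and carry out the reduction over the smooth locus.
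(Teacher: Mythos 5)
Your reduction skeleton is genuinely different from the paper's proof, and over the edges of the curve it is essentially sound, but the proposal has two real gaps. First, the existence step fails as stated: a concatenation of integral-slope segments joining $p$ to $q$ is \emph{not} the image of a tropical curve, because the balancing condition is violated at the two $1$-valent endpoints and at every $2$-valent corner where the direction changes; so your $C$ is not a tropical $1$-cycle and $\pi_B(\Gamma')$ would not be ``a tropical curve'' as the lemma requires. Producing an honest tropical curve through (or near) two given points is exactly where the polarisation hypothesis enters --- your argument never uses it, which is a warning sign, since an unpolarised tropical torus in general contains no tropical curves at all. The paper handles this by invoking \cite[Lemma 3.28]{nick_ivan1}: after a Hamiltonian perturbation of $\Gamma$, one may take $p$ and $q$ to lie on edges of a \emph{trivalent} tropical curve $C\subset B$.

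Second, the trivalent vertices are not a ``minor separate point''; they are where the whole difficulty of the lemma sits. Over an edge of direction $v$, the null foliation of $\Sigma=\pi^{-1}(C)$ consists of the conormal $(n-1)$-tori $\{\eta:\eta(v)=0\}$ in the fibres, so your saturation of $A$ reproduces, up to fibrewise translation, the fibrewise sum of $\Gamma$ with the \emph{piecewise-linear} conormal lift of $C$. But $\Sigma$ is not a smooth coisotropic at the vertices, and carrying out the reduction only over the smooth locus leaves $\Gamma'$ a Lagrangian with boundary over the vertex fibres; its closure (the PL saturation over all of $C$) is not an immersed submanifold, so you never obtain a closed Lagrangian $\Gamma'\subset M\times X(B)$. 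Supplying the smooth interpolation across the trivalent vertices is precisely what the paper's route buys: it takes a \emph{smooth} Lagrangian lift $L_C$ of the trivalent curve (Matessi--Mikhalkin \cite{matessi_lag_pair_of_pants,mikhalkin_trop_to_Lag_corresp}, or \cite[Corollary 4.9]{nick_ivan1}) and forms the fibrewise sum of $\Gamma$ with $L_C$ via Subotic's correspondence \cite{subotic_monoidal_structure}; over the edges this adds the same conormal tori as your null leaves, while the pair-of-pants lift smooths the vertices. To repair your argument you would have to invoke exactly those lifting results, at which point it collapses into the paper's proof. (Your verification of the end conditions $\Gamma'(F_p)=L$ and $\Gamma'(F_q)=L'$, including the observation that the intersection is clean but non-transverse --- harmless for the set-theoretic definition of geometric composition used in the paper --- is correct and parallels the paper's check.)
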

\begin{proof}
	Because $B$ is polarised, it contains a tropical curve passing through any two given points. For our purposes, we want these curves to admit Lagrangian lifts. We use \cite[Lemma 3.28]{nick_ivan1}, which states that, given a point $p$ on a polarised tropical torus $B$, there is an open dense subset $U_p$ of points in $B$ such that, for any point $q\in U_p$, $p$ and $q$ lie on edges of some trivalent tropical curve. Therefore, possibly after perturbing $\Gamma$ by a hamiltonian isotopy, one can take $p$ and $q$ to lie on edges of some trivalent tropical curve $C$ in $B$. 
	The fact that trivalent tropical curves admit Lagrangian lifts follows independently from constructions of Matessi and Mikhalkin in \cite{matessi_lag_pair_of_pants,mikhalkin_trop_to_Lag_corresp}, and is also proved in \cite[Corollary 4.9]{nick_ivan1} with additional properties of the lift (e.g. it is graded and \textit{locally exact}). Let $L_C\subset X(B)$ denote such a lift. 
	We now recall a construction of A. Subotic \cite{subotic_monoidal_structure} known as fibrewise sum in a Lagrangian torus fibration with a distinguished Lagrangian section. In the case of a symplectic polarised torus $X(B)$, this is the operation of geometric composition with the correspondence 
		\begin{align}\label{connect_sum_tori}
		\Sigma:=\{(b,f_1,b,f_2,b,f_1+f_2): b\in B, f_1, f_2\in F_b\}\subset \overline{X(B)\times X(B)}\times X(B).  \notag 
	\end{align}
	We now consider the correspondence $\varphi_{13452}(\Delta_M\times\overline{\Sigma})\subset \overline{M}\times X(B)\times X(B)\times \overline{X(B)}\times M$, where $\Delta_M\subset \overline{M}\times M$ is the diagonal, and 
	\begin{align}
		\varphi_{13452}&:M\times M\times X(B)\times X(B)\times X(B)\longrightarrow M\times X(B)\times X(B)\times M\times X(B) \notag \\
		&\hspace{20mm}(z_1,z_2,z_3,z_4,z_5)\hspace{15mm}\mapsto \hspace{15mm}(z_1,z_3,z_4,z_5,z_2) \notag
	\end{align}
	is simply the map changing the order of the product components. This correspondence takes Lagrangians in $M\times \overline{X(B)\times X(B)}$ to Lagrangians in $\overline{X(B)}\times M$. From this we define $\Gamma'$ as  $\varphi_{13452}(\Delta_M\times\Sigma)(\Gamma\times L_C)$. 

	We check that $\Gamma'(F_p)=L$, and verifying $\Gamma'(F_q)=L'$ is analogous. Notice that, by definition of $\Gamma'$,
	\begin{align}
		\Gamma'(F_p)&=\pi_M(\Gamma'\pitchfork(M\times F_p))=\{m\in M: \exists f\in F_p, (m,f)\in \Gamma'\}\notag\\
		&=\{m\in M: \exists f,f_1,f_2\in F_p, (m,f_1)\in\Gamma, f_2\in L_C, f=f_1+f_2\}.\notag
	\end{align}
	It remains to notice that because $\Gamma(F_p)=L$, there exists $f_1\in F_p$ with $(m,f_1)\in\Gamma$ if and only if $m\in L$. For $m\in L$, one can take any $f_2\in F_p\cap  L_C$ to be the intersection of $F_p$ with the zero section in $X(B)$, then $f=f_1$ satisfies $(m,f)\in\Gamma'$. 
\end{proof}

\subsection{Flux maps from $\algcob(M)$}\label{flux_from_alglagcob}

In this Section, we prove that some of the flux maps $\Theta_\alpha$ defined in Section \ref{symplectic_flux} from $\Cob(M)$ (namely those with $\alpha$ of the form $\w\wedge\beta$) factor through $\Cob(M)\stackrel{\pi}{\longrightarrow}\algcob(M)$. Similarly as for Lagrangian cobordism groups, we denote $\algcob^{or}(M)_{hom}$ for the homologically trivial part of the oriented algebraic Lagrangian cobordism group (that is, the kernel of the cycle class map from Proposition \ref{algcob_cyc_class}).

\begin{theorem}\label{algcob_flux_theorem}
	For any closed form $\beta\in\Omega^{n-3}(M),$ there is a \textit{flux-type} morphism
	\begin{align}
		\tilde{\Theta}_\beta:&\hspace{2mm}\algcob^{or}(M)_{hom}\longrightarrow\R/\left([\w^2\wedge\beta]\cdot H_{n+1}(M)\right)\notag\\
		&\hspace{9mm}L\hspace{17mm}\mapsto\hspace{12mm} \int_{\gamma_0}\w^2\wedge\beta, \notag
	\end{align} 
	where $\gamma_0$ is an $(n+1)$-chain in $M$ with $\partial\gamma_0=L$.
\end{theorem}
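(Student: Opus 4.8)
The plan is to show that the stated formula is a well-defined group homomorphism by separating two issues: independence of the bounding chain (modulo periods), and compatibility with the algebraic Lagrangian cobordism relations. The first is formal. Since $\beta$ is closed and $\w$ is closed, $\w^2\wedge\beta$ is a closed $(n+1)$-form, so any two chains $\gamma_0,\gamma_0'$ with $\partial\gamma_0=\partial\gamma_0'=L$ differ by an $(n+1)$-cycle, whence $\int_{\gamma_0}\w^2\wedge\beta-\int_{\gamma_0'}\w^2\wedge\beta$ lies in $[\w^2\wedge\beta]\cdot H_{n+1}(M)$. Additivity of $L\mapsto\int_{\gamma_0}\w^2\wedge\beta$ under disjoint unions of bounding chains then yields the homomorphism property, once well-definedness on the quotient is established. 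Thus the entire content is to prove that each generating algebraic Lagrangian cobordism contributes a period.

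The heart of the argument is a direct computation. I would take an algebraic Lagrangian cobordism $\Gamma\colon(L_1,\dots,L_k)\rightsquigarrow(L_1',\dots,L_{k'}')$ in $\overline{X(B)}\times M$, write $A=\sum_iL_i$, $A'=\sum_jL_j'$, and use the bounding chain of $A-A'$ produced in Proposition \ref{algcob_cyc_class}: choosing a path $P$ from $p$ to $q$ in $B$, generic so that $\pi_B|_\Gamma$ is transverse to $P$, I set $\Gamma_P:=\Gamma\cap\pi_B^{-1}(P)$ and $\gamma_0:=(\pi_M)_*\Gamma_P$, an $(n+1)$-chain with $\partial\gamma_0=A-A'$. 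By the pushforward identity, $\int_{\gamma_0}\w^2\wedge\beta=\int_{\Gamma_P}\pi_M^*\w^2\wedge\pi_M^*\beta$, where $\pi_X,\pi_M$ denote the projections from $\overline{X(B)}\times M$ onto its two factors.

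Next I would invoke the Lagrangian condition to convert this into an integral of $\w_{X(B)}^2$. Since $\Gamma$ is Lagrangian for $-\pi_X^*\w_{X(B)}+\pi_M^*\w$, we have $\pi_M^*\w|_\Gamma=\pi_X^*\w_{X(B)}|_\Gamma$, hence $\pi_M^*\w^2|_\Gamma=\pi_X^*\w_{X(B)}^2|_\Gamma$, and restricting to $\Gamma_P$ gives $\int_{\gamma_0}\w^2\wedge\beta=\int_{\Gamma_P}\pi_X^*\w_{X(B)}^2\wedge\pi_M^*\beta$. The key observation is that $\pi_X(\Gamma_P)$ lies in the preimage $X(B)_P$ of the one-dimensional set $P$ under $X(B)\to B$; on $X(B)_P$ the tangent space is spanned by the Lagrangian fibre directions together with a single base direction lifted from $TP$, so the Lagrangian fibres being isotropic forces $\w_{X(B)}|_{X(B)_P}=ds\wedge\eta$ for a fibrewise $1$-form $\eta$, a form of rank at most two. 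Consequently $\w_{X(B)}^2|_{X(B)_P}=0$ pointwise, the integrand vanishes identically on $\Gamma_P$, and $\int_{\gamma_0}\w^2\wedge\beta=0$. This pointwise vanishing is the symplectic counterpart of Corollary \ref{determinantal_form_vanishes} and Remark \ref{determinantal_form_vanishes_locally} on the tropical side. Combining this with chain-independence, two representatives of a class in $\algcob^{or}(M)_{hom}$ differ by a sum of such generating relations, whose associated bounding chains have vanishing flux, so their fluxes agree modulo $[\w^2\wedge\beta]\cdot H_{n+1}(M)$; hence $\tilde\Theta_\beta$ descends to the quotient.

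The step I expect to require the most care is not the vanishing computation, which is clean, but the transversality and immersed bookkeeping surrounding it: arranging the perturbation of $\Gamma$ or of $P$ so that $\Gamma_P$ is a genuine $(n+1)$-chain with the correct boundary $A-A'$ while keeping $\pi_B(\Gamma_P)$ one-dimensional, so that the rank-two degeneration of $\w_{X(B)}|_{X(B)_P}$ persists, and correctly tracking orientations through the geometric compositions $\Gamma(F_p)=A$ and $\Gamma(F_q)=A'$. A secondary point to verify carefully is that the difference of two equivalent representatives really is expressible as an integer combination of generating relations, each of which is a boundary, so that the period argument closes up.
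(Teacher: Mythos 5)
Your proof is correct, and it takes a genuinely more direct route than the paper while reaching a stronger intermediate conclusion. The paper also starts from $C_1:=\Gamma\cap\pi_B^{-1}(P([0,1]))$, but rather than evaluating the flux on it directly, it builds two auxiliary chains by fibrewise parallel transport — $C_2$, transporting $\Gamma\cap(M\times F_q)$ back along $\overline{P}$, and $C_3$, transporting each slice $\Gamma\cap(M\times F_{P(t)})$ into $M\times F_p$ — so that $C:=C_3-C_1-C_2$ is a closed $(n+1)$-cycle; it then integrates $\left(\pi_M^*\w-\pi_{X(B)}^*\w_{X(B)}\right)^2\wedge\pi_M^*\beta$ over $C$ in two ways, killing the $C_1$ contribution by the Lagrangian condition and the $C_2$ contribution by exactly your rank-two observation (the base component of $TC_2$ is one-dimensional), and finally using contractibility of $\overline{P}\star P$ to retract $C$ into $M\times F_p$ and identify the total as a period of $\w^2\wedge\beta$. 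You apply the same two ingredients — the Lagrangian trade $\pi_M^*\w^2|_{T\Gamma}=\pi_{X(B)}^*\w_{X(B)}^2|_{T\Gamma}$ and the pointwise degeneracy of $\w_{X(B)}^2$ on the preimage of the one-dimensional set $P$ — one step earlier, directly on $C_1$, which makes the auxiliary chains, the cycle $C$, and the retraction unnecessary. Note moreover that the transport map defining $C_3$ preserves the $M$-coordinate, so $(\pi_M)_*C_3=(\pi_M)_*C_1$: the two proofs evaluate the flux of literally the same bounding chain, with the paper concluding only that it lies in the periods while you show it vanishes identically — which, as you observe, is the precise symplectic counterpart of the tropical local vanishing in Remark \ref{determinantal_form_vanishes_locally}, making the analogy with Corollary \ref{determinantal_form_vanishes} tighter than in the published argument. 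The caveat you flag is real but not a gap relative to the paper: transversality should be achieved either by perturbing $P$ through paths with fixed endpoints, so that $\pi_B^{-1}(P)$ remains the preimage of a path and the rank degeneration persists, or by a Hamiltonian perturbation of $\Gamma$, in which case the boundary of $(\pi_M)_*\Gamma_P$ is only Hamiltonian isotopic to $A-A'$ and must be corrected by suspension cobordisms (Example \ref{suspension_cobordism}), whose flux contribution vanishes; the paper's proof handles this point at the same level of informality.
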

\begin{proof}
	Assume there exists a Lagrangian correspondence $\Gamma\subset M\times \overline{X(B)}$ between the finite sets of Lagrangians $(L_1,\dots,L_k)$ and $(L'_1,\dots, L'_{k'})$. It suffices to show that, if $\gamma_0$ is an $(n+1)$-chain in $M$ with $\partial\gamma_0=L_1+\dots+L_k-L'_1-\dots-L'_{k'}$, then $$\int_{\gamma_0}\w^2\wedge\beta\in [\w^2\wedge\beta]\cdot H_{n+1}(M;\Z).$$
	
	We proceed by constructing an $(n+1)$-cycle $C$ in $M\times \overline{X(B)}$, and show on the one hand that the integral of $(\pi_M^*\w-\pi_{X(B)}^*\w_{X(B)})^2\wedge\pi^*_{M}\beta$ on $C$ is the same as an integral over an $(n+1)$-chain $\gamma_0$ as above, and on the other hand that $C$ can be continuously deformed to an $(n+1)$-cycle in $M\times F_p$. The result will follow.  Throughout the proof, we will assume all intersections we consider are transverse, which can always be achieved by a small hamiltonian perturbation of $\Gamma$. 
	
	We start by choosing a path $$P:[0,1]\longrightarrow B$$ such that $P(0)=p$ and $P(1)=q$. Because $\pi_B(\Gamma)=B$, for all $t\in[0,1]$, $\pi^{-1}_{B}(P(t))\cap\Gamma$ is non-empty. 
	We write $$C_1:=\Gamma\cap(\pi^{-1}_B(P([0,1]))).$$

	Recall that the flat connection determined by the affine structure on $B$ defines a canonical parallel transport map 
	\begin{align}
		\varphi_\gamma:\hspace{1mm}&[0,1]\times\{q\}\times F_q\longrightarrow \pi_B^{-1}(\gamma([0,1]))\subset X(B)\notag \\
		&\hspace{9mm}(t,y)\hspace{12mm}\mapsto\hspace{1mm}(\gamma(t),\tilde{\varphi}(t,y)) \notag
	\end{align}
	for any path $\gamma:[0,1]\rightarrow B$ with $\gamma(0)=q$, where $\tilde{\varphi}(t,y)\in F_{\gamma(t)}$ is the parallel transport along $\gamma([0,t])$ of $y\in F_q$.
	From this, we define $C_2$ as the image of the embedding
	\begin{align}
		&[0,1]\times(\Gamma\cap(M\times F_q))\longrightarrow M\times X(B) \notag\\
		&\hspace{14mm} (t,x,q,y)\hspace{10mm}\mapsto \hspace{1mm}(x,\varphi_{\overline{P}}(t,y)), \notag
	\end{align}
	where $\overline{P}$ denotes the path $P$ with the orientation reversed, and $x\in M$ while $y\in F_q$.
	Notice that $C_1\cup C_2$ is an $(n+1)$-chain in $M\times X(B)$ with boundary in $M\times F_p$, and more precisely $\pi_M(\partial(C_1\cup C_2))=L_1+\dots+L_k-L'_1-\dots-L'_{k'}\subset M$. The fact that the boundary component in $M\times F_q$ is empty simply follows from the fact that the boundaries of $\partial C_1$ and $\partial C_2$ cancel out by construction of $C_2$.

	Our next step is to construct an $(n+1)$-chain $C_3$ inside $M\times F_p$ with $\partial C_3=\partial(C_1\cup C_2)\subset M\times F_p$. Here we use parallel transport again from the fibres $F_{P(t)}$ to $F_p$, this time sending $\Gamma\cap(M\times F_{P(t)})$ to $M\times F_p$. More explicitly, we take $C_3$ to be the image of the map
	\begin{align}
		C_1\hspace{6mm}&\longrightarrow \hspace{4mm}M\times F_p\notag\\
		(x,P(t),f)&\hspace{1mm}\mapsto (x,p=P(0),\tilde{\varphi}_{\tilde{P}_t}(1,f)), \notag
	\end{align}
	where $x\in M$, $f\in F_{P(t)}$, and $\tilde{P}_t:[0,1]\rightarrow B$ is defined by $\tilde{P}_t(s):=P(t(1-s))$. From this construction it follows that $C_3$ is $(n+1)$-dimensional except in the trivial case where $C_1=C_2$ as a set, in which case it is $n$-dimensional, and in particular this would imply $L_1+\dots+L_k=L'_1+\dots+L'_{k'}$. 
	Putting all this together, we have constructed an $(n+1)$-cycle $$C:=C_3-C_1-C_2$$ inside $M\times \overline{X(B)}$. We now compute the integral
	\begin{equation}\label{integral_over_C}
		\int_{C}\left(\pi_M^*\w-\pi_{X(B)}^*\w_{X(B)}\right)^2\wedge\pi_M^*\beta
	\end{equation}
	in two different ways. 
	The first way involves noticing that the only contributions to this integral come from integrating over $C_3$. The fact that the contributions from $C_1$ vanish follows from the fact that $$\left(\pi_M^*\w-\pi_{X(B)}^*\w_{X(B)}\right)\vert_{C_1}=0$$ because $TC_1\subset T\Gamma$ and $\Gamma$ is Lagrangian in $M\times\overline{X(B)}$. Although the same doesn't necessarily hold for $C_2$, it is true that $$\left(\pi_M^*\w-\pi_{X(B)}^*\w_{X(B)}\right)^2\vert_{C_2}=0.$$ 
	
	To see this, one can decompose the tangent space to $C_2\subset M\times \overline{X(B)}$ at any point $(x,P(t),f_{P(t)})\in C_2$. The component along $M$ lives in an isotropic subspace of $T_xM$, because $C_2$ is parallel transport of $\Gamma\cap(M\times F_q)$.  By considering the trivialisation $$t_b: X(B)\longrightarrow B\times F_b$$ given by parallel transport, one sees that the component along $T_{P(t)}B$ is one-dimensional, therefore $\w_{X(B)}^2$ necessarily vanishes along the $X(B)$ direction. 
	
	We have now reduced the integral from Equation \eqref{integral_over_C} to
	\begin{align}
		\int_{C_3}\left(\pi_M^*\w-\pi_{X(B)}^*\w_{X(B)}\right)^2\wedge\pi_M^*\beta= \int_{C_3}\pi_M^*(\w^2\wedge\beta)=\int_{(\pi_M)_*C_3}\w^2\wedge\beta. \notag
	\end{align}
	Here the first equality follows from the fact that $C_3$ is contained in $M\times F_p$ and $F_p$ is Lagrangian in $X(B)$. Notice that wa have exhibited an $(n+1)$-chain in $M$, namely $(\pi_M)_*C_3$, whose boundary is $L_1+\dots+L_k-L'_1-\dots-L'_{k'}$, and over which $\w^2\wedge\beta$ integrated to something which lies in $$\left[\left(\pi_M^*\w-\pi_{X(B)}^*\w_{X(B)}\right)^2\wedge\pi_M^*\beta\right]\cdot H_{n+1}(M\times X(B);\Z).$$ 
	The aim of the second computation from Equation \eqref{integral_over_C} is to show that the integral over $C$ can actually be reduced to an integral of $\w^2\wedge\beta$ over an $(n+1)$-cycle in $M$. That is, the integral lies in $$[\w^2\wedge\beta]\cdot H_{n+1}(M;\Z)\subset\left[\left(\pi_M^*\w-\pi_{X(B)}^*\w_{X(B)}\right)^2\wedge\pi_M^*\beta\right]\cdot H_{n+1}(M\times X(B);\Z).$$
	This follows from the trivial fact that the composition of paths $\overline{P}\star P$ is contractible. Denote by $$r:[0,1]^2\rightarrow B$$ a homotopy to $\{p\}$, that is $r(0,s)=p=r(1,s)$, $r(s,0)=	\overline{P}\star P(s)$ and $r(s,1)=p$. This induces a strong deformation retraction $$R(1): \pi_B^{-1}(P([0,1]))\longrightarrow M\times F_p,$$ again obtain  by parallel transport through the following family of maps parametrised by $s\in[0,1]$: 
	\begin{align}
		R(s):&\hspace{1mm}\pi_B^{-1}(P([0,1]))\longrightarrow \pi_B^{-1}(r([0,1],s)) \notag \\
		&\hspace{3mm}(x,P(t),f)\hspace{3mm}\mapsto (x,r(t,s),\varphi_{r(t,\cdot)}(f)).\notag
	\end{align} 
	Noticing that $C\subset \pi_B^{-1}(P([0,1]))$, this allows us to compute the integral in Equation \eqref{integral_over_C}:
	\begin{align}
		\int_{C}\left(\pi_M^*\w-\pi_{X(B)}^*\w_{X(B)}\right)^2\wedge\pi_M^*\beta&=\int_{R(1)_*C}\left(\pi_M^*\w-\pi_{X(B)}^*\w_{X(B)}\right)^2\wedge\pi_M^*\beta \notag \\
		&=\int_{R(1)_*C}\pi_M^*(\w^2\wedge\beta)\notag \\
		&= \int_{(\pi_M\circ R(1))_*C}\w^2\wedge\beta.\notag 
	\end{align}
	It remains to observe that $(\pi_M\circ R(1))_*C$ is an $(n+1)$-cycle in $M$, therefore we have shown that $$	\int_{C}\left(\pi_M^*\w-\pi_{X(B)}^*\w_{X(B)}\right)^2\wedge\pi_M^*\beta=\int_{(\pi_M)_*C_3}\w^2\wedge\beta$$ lies in the periods of $\w^2\wedge\beta$, from which we conclude the proof. 
\end{proof}

The following is immediate by comparing the definition of the flux maps from $\Cob^{or}(M)$ and $\algcob^{or}(M)$.

\begin{corollary}	
	For $\alpha=\w\wedge\beta$, the flux morphisms $\Theta_\alpha$ and $\tilde{\Theta}_\beta$ defined from $\Cob^{or}(M)$ and $\algcob^{or}(M)$ factor through the projection map as follows
	\begin{center}
			\begin{tikzcd}
			\Cob^{or}(M) \arrow[rd, "\Theta_{\w\wedge\beta}"'] \arrow[rr, "\pi"] &                                        & \algcob^{or}(M) \arrow[ld, "\tilde{\Theta}_{\beta}"] \\
			& \R/\left({[\w^2\wedge\beta]\cdot H_{n+1}(M;\Z)}\right). &                                                     
		\end{tikzcd}
	\end{center}
\end{corollary}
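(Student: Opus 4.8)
The plan is to observe that the two flux morphisms are given by \emph{identical} data, so that commutativity is simply a matter of unwinding the definitions; the content of the statement has already been discharged in Theorem \ref{algcob_flux_theorem}. First I would compare the targets. With $\alpha=\w\wedge\beta$, the morphism $\Theta_\alpha$ of Equation \eqref{flux_type_morphism} lands in $\R/(\alpha\wedge\w)(H_{n+1}(M;\Z))$, and since $\alpha\wedge\w=\w\wedge\beta\wedge\w=\w^2\wedge\beta$ — the $2$-form $\w$ commuting past $\beta$ — this period subgroup equals $[\w^2\wedge\beta]\cdot H_{n+1}(M;\Z)$, which is exactly the target of $\tilde\Theta_\beta$. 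Hence both morphisms take values in one and the same quotient of $\R$, and asking for the triangle to commute on the nose is meaningful.

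Next I would check that $\pi$ carries the homologically trivial subgroup into the homologically trivial subgroup, so that the composite $\tilde\Theta_\beta\circ\pi$ is defined. By Remark \ref{cob_implies_algcob}, $\pi\colon\Cob^{or}(M)\to\algcob^{or}(M)$ is the quotient map induced by the identity on the free abelian group $\bigoplus_{L}\Z\cdot L$, and the cycle class map of Proposition \ref{algcob_cyc_class} agrees with that of Section \ref{symplectic_flux} after factoring through $\pi$. Thus if $L\in\Cob^{or}(M)_0$, meaning its image under the cycle class map vanishes in $H_n(M;\Z)$, then the same integer combination of Lagrangians has trivial class, so $\pi(L)\in\algcob^{or}(M)_{hom}$.

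Finally I would compare the two values on a generator. For $L\in\Cob^{or}(M)_0$, the element $\pi(L)$ is the \emph{same} integer combination of Lagrangian submanifolds of $M$, so any smooth $(n+1)$-chain $\gamma$ with $\partial\gamma=L$ is simultaneously a chain realising $\partial\gamma=\pi(L)$. Evaluating both definitions along this common $\gamma$ gives
\begin{equation}
\tilde\Theta_\beta(\pi(L))=\int_\gamma\w^2\wedge\beta=\int_\gamma\alpha\wedge\w=\Theta_\alpha(L)
\end{equation}
as classes in $\R/([\w^2\wedge\beta]\cdot H_{n+1}(M;\Z))$, which is precisely the asserted commutativity. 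The only point needing care is that each side is independent of the chosen bounding chain, so that using the \emph{same} $\gamma$ on both sides is legitimate; but this is already guaranteed by Lemma \ref{cob_implies_periods} and Theorem \ref{algcob_flux_theorem}. I therefore expect no genuine obstacle here: the substance lies entirely in the earlier construction of $\tilde\Theta_\beta$, and the corollary is an immediate comparison of formulas once the targets and the homologically trivial subgroups are matched up as above.
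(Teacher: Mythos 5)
Your proposal is correct and matches the paper's argument: the paper proves this corollary simply by declaring it \enquote{immediate by comparing the definitions} of the two flux maps, which is exactly what you carry out — identifying the targets via $\alpha\wedge\w=\w^2\wedge\beta$, matching the homologically trivial subgroups through the compatible cycle class maps of Remark \ref{cob_implies_algcob}, and evaluating both morphisms on a common bounding chain, with well-definedness supplied by Lemma \ref{cob_implies_periods} and Theorem \ref{algcob_flux_theorem}. Your write-up is in fact a more careful expansion of the same route.
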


\subsection{A Lefschetz Jacobian}\label{Lefschetz_jacobian}

In this Section, we reformulate the flux maps constructed in Sections \ref{flux_type_morphism} and \ref{flux_from_alglagcob} as \enquote{symplectic Abel-Jacobi maps} to a \textit{Lefschetz Jacobian}. These can be thought of as a symplectic analogue of intermediate Jacobians. We work under the assumption that $M$ is a Lefschetz manifold, that is the maps $$\cup[\w]^k:H^{n-k}(M;\R)\longrightarrow H^{n+k}(M;\R)$$ are isomorphisms for all $0\leq k\leq n$.

Letting $\mathcal{L}^{or}$ be a set of suitable oriented Lagrangians in $M$, define $\mathbb{L}^{or}:=\oplus_{L\in\mathcal{L}}\Z\cdot L$. This comes with an obvious cycle class map $$cyc:\mathbb{L}^{or}\longrightarrow H_n(M;\Z),$$ whose kernel we denote $\mathbb{L}^{or}_{hom}$.

There is a generalised flux map map
\begin{align}
	\Theta:\hspace{6mm}\mathbb{L}^{or}_{hom}\hspace{6mm}&\longrightarrow 
	 LJ(M):=H^{n+1}(M;\R)^{\vee}/H_{n+1}(M;\Z)\notag \\
	L\hspace{9mm}&\mapsto \hspace{14mm}
	\left([\eta]\mapsto \int_{\gamma}\eta\right), \notag
\end{align}
where $\partial\gamma=L$. This is independent of the choice of such $\gamma$ because we view $H_{n+1}(M;\Z)$ as a lattice inside $H^{n+1}(M;\R)^{\vee}$ through the usual integration map over $(n+1)$-cycles. 
 To see that $\Theta$ is well-defined at the level of cohomology, notice that an exact $(n+1)$-form $d\nu$ must be of the form $\w\wedge d\mu=d(\w\wedge\mu)$ for some $(n-2)$-form $\mu$. Therefore $$\int_{\gamma}d\eta=\int_{L}\w\wedge\mu=0.$$

Given an algebraic Lagrangian cobordism $\Gamma\subset M\times X(B)$, and given a choice of basepoint $b_0\in B$, one can consider the map 
\begin{align}
	\varphi_\Gamma:B&\longrightarrow LJ(M)\notag \\
	b&\mapsto \Theta(\Gamma(F_b-F_{b_0})). \notag 
\end{align}

A special instance of this is $$\varphi_\Delta:B\longrightarrow LJ(X(B))$$ induced by the diagonal $\Delta\subset X(B)\times X(B)$, which provides an algebraic Lagrangian cobordism between any pair of fibres $F_p$, $F_q$ in $X(B)$. Below we show that the real torus $LJ(X(B))$ can be endowed with a tropical structure making $\varphi_\Delta$ into a tropical embedding. 

It follows from Theorem \ref{cob_implies_algcob} and by the fact that $H^{n+1}(M;\R)\cong [\w]\cup H^{n-1}(M;\R)$ that nullcobordant elements in $\mathbb{L}^{or}_{hom}$ lie in the kernel of $\Theta$. In other words, $\varphi_\Gamma$ factors as
\begin{center}
	\begin{tikzcd}
		B \arrow[rd, "{b\mapsto [\Gamma(F_b)-\Gamma(F_{b_0})]}"', shift left] \arrow[rr, "\varphi_\Gamma"] &                                                                          & LJ(M) \\
		& \Cob^{or}(M)_{hom}. \arrow[ru, "\tilde{\varphi}_\Gamma"', shift left] &         
	\end{tikzcd}
\end{center}

Similarly, Theorem \ref{flux_from_alglagcob} implies that algebraically nullbordant elements in $\mathbb{L}^{or}_{hom}$ are mapped to the subset 
\begin{equation}\label{hey}
	\left(H^{n+1}(M;\R)/([\w]^2\cup H^{n-3}(M;\R))\right)^\vee\subset \left(H^{n+1}(M;\R)\right)^\vee.
\end{equation}

We now describe a tropical structure for $LJ(M)$ by choosing a lattice in $(H^{n+1}(M;\R))^\vee$ of those maps $H^{n+1}(M;\R)\longrightarrow\R$ which evaluate to integers on a certain sublattice $H^{n+1}(M;\R)_\Z$ of $H^{n+1}(M;\R)$. To define $H^{n+1}(M;\R)_\Z$, we invoke the Lefschetz decomposition $$H^{n+1}(M;\R)\cong\bigoplus_{i}[\w]^i\cup \ker([\w]^{2i}:H^{n-2i+1}(M;\R)\longrightarrow H^{n+2i+1}(M;\R)).$$ From this we set
\begin{equation}\label{tropical_structure_lefschetz_jacobian}
	H^{n+1}(M;\R)_\Z:=\bigoplus_{i}[\w]^i\cup\ker([\w]^{2i}:H^{n-2i+1}(M;\Z)\longrightarrow H^{n+2i+1}(M;\R)).
\end{equation}

Unless otherwise stated,  from now on when referring to $LJ(M)$ as a tropical object we endow it with the aforementioned tropical structure.

\begin{proposition}\label{lefschetz_embedding}
	A polarised tropical torus $B$ embeds tropically
	into the Lefschetz Jacobian $LJ(X(B))$. 
\end{proposition}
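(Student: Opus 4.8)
The plan is to make the map $\varphi_\Delta\colon B\to LJ(X(B))$ completely explicit and reduce everything to a Künneth computation on the torus $X(B)=T^*B/T^*_\Z B$, which splits as a product $B\times F$ of the base $B=\R^n/\Gamma_1$ with the fibre torus $F$ (so that $H_1(F;\Z)\cong T^*_\Z B$ and $H^n(F;\R)\cong\R$). Since $\Delta(F_b)=F_b$, we have $\varphi_\Delta(b)=\Theta(F_b-F_{b_0})$; choosing a path $P$ in $B$ from $b_0$ to $b$, the cylinder $\gamma=P\times F$ satisfies $\partial\gamma=F_b-F_{b_0}$, so $\varphi_\Delta(b)([\eta])=\int_{P\times F}\eta$. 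As $\dim(P\times F)=n+1$ with $\dim P=1$, only the $(1,n)$ Künneth component $H^1(B;\R)\otimes H^n(F;\R)$ of $[\eta]\in H^{n+1}(X(B);\R)$ contributes, and on it $\int_{P\times F}(\alpha\otimes\mu)=\alpha([P])\cdot\int_F\mu$. Hence $\varphi_\Delta$ lifts to the linear map $\tilde\varphi\colon\R^n\to H^{n+1}(X(B);\R)^\vee$, $v\mapsto\sum_i v_i[\gamma_1^{(i)}\times F]$, where $\{\gamma_1^{(i)}\}$ is a basis of $\Gamma_1\cong H_1(B;\Z)$ and $v=\sum_i v_i\gamma_1^{(i)}$; being the lift of an affine map, $\tilde\varphi$ is also its (constant) differential.

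For the embedding I would argue as follows. The classes $[\gamma_1^{(i)}\times F]$ form the $(1,n)$ part of the Künneth basis of $H_{n+1}(X(B);\Z)$, hence are linearly independent, so $\tilde\varphi$ is injective. Moreover they generate exactly the direct summand $H_1(B;\Z)\otimes H_n(F;\Z)$ of $H_{n+1}(X(B);\Z)$, so $\tilde\varphi(v)\in H_{n+1}(X(B);\Z)$ if and only if all $v_i\in\Z$, i.e. $v\in\Gamma_1$. Thus $\varphi_\Delta$ descends to an injective homomorphism of the compact torus $B=\R^n/\Gamma_1$ into $LJ(X(B))=H^{n+1}(X(B);\R)^\vee/H_{n+1}(X(B);\Z)$; as $B$ is compact this is automatically a topological embedding.

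It remains to verify tropicality, $\tilde\varphi(\Gamma_2)\subset T_\Z LJ(X(B))=(H^{n+1}(X(B);\R)_\Z)^\vee$; this is the heart of the argument. Each $\tilde\varphi(e_j)$ ($\{e_j\}$ a basis of $\Gamma_2=T_\Z B$) vanishes on every Künneth type except $(1,n)$, so I first identify the $(1,n)$ part of the Lefschetz lattice $H^{n+1}(X(B);\R)_\Z$ of \eqref{tropical_structure_lefschetz_jacobian}. Writing $[\w]=\sum_k dx_k\wedge dy_k$ (type $(1,1)$), only the $i=1$ summand $[\w]\cup\ker([\w]^2\colon H^{n-1}(X(B);\Z)\to H^{n+3}(X(B);\R))$ can meet the $(1,n)$ type, and every class of type $(0,n-1)$, i.e. in $H^{n-1}(F;\Z)$, is automatically primitive because $[\w]^2$ would raise the $F$-degree to $n+1>n$. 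Consequently the $(1,n)$ part of $H^{n+1}(X(B);\R)_\Z$ is spanned over $\Z$ by $[\w]\cup\mu_l=\pm\,dx_l\wedge\mathrm{vol}_F$, where $\mu_l\in H^{n-1}(F;\Z)$ omits $dy_l$. Pairing, with $\int_{\gamma_1^{(i)}}dx_l=Q_{li}$ and the coordinates of $e_j$ in the $\Gamma_1$-basis being the columns of $Q^{-1}$, yields $\langle\tilde\varphi(e_j),[\w]\cup\mu_l\rangle=\pm\sum_i Q_{li}(Q^{-1})_{ij}=\pm\delta_{lj}\in\Z$. Since $\tilde\varphi(e_j)$ kills all other Künneth types, this gives $\tilde\varphi(e_j)\in T_\Z LJ(X(B))$, so $\varphi_\Delta$ is tropical; the values $\pm\delta_{lj}$ in fact show $\Gamma_2$ maps isomorphically onto the relevant dual lattice, confirming a genuine tropical embedding.

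The main obstacle — and the only non-formal point — is this last step: matching the Lefschetz integral lattice $H^{n+1}(X(B);\R)_\Z$, which incorporates the non-integral symplectic class $[\w]$ (whose periods are the polarisation entries $Q_{ij}$), against the integral cotangent lattice $\Gamma_2$ of $B$. Everything hinges on the clean identity $Q\,Q^{-1}=\mathrm{Id}$ converting the $Q$-periods of $[\w]\cup\mu_l$ into the $Q^{-1}$-coordinates of the tropical basis $\{e_j\}$; this is precisely why the tropical structure on $LJ$ was defined through the Lefschetz decomposition rather than through topological integral cohomology. In writing the full proof I would also take care to justify the Künneth reduction at the level of chains (legitimate here since $X(B)$ is a torus and all representatives may be taken translation-invariant) and to note that the fibres $F_b$ are canonically oriented Lagrangian tori, so that $F_b-F_{b_0}\in\mathbb{L}^{or}_{hom}$ and the generalised flux $\Theta$ applies.
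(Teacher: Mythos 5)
Your proposal is correct and follows essentially the same route as the paper's proof: the diagonal correspondence $\Delta$, the cylinder over a path in $B$ realising the flux, the K\"unneth identification of the differential with the classes $[\gamma_1^{(i)}\times F_b]$, injectivity via the integral K\"unneth lattice, and tropicality via the identity $Q\,Q^{-1}=\mathrm{Id}$ pairing the images of $T_\Z B$ against the $i=1$ summand $[\w]\cup\ker\bigl([\w]^2:H^{n-1}(X(B);\Z)\to H^{n+3}(X(B);\R)\bigr)$ of the Lefschetz lattice. If anything, your explicit identification of the full $(1,n)$-component of $H^{n+1}(X(B);\R)_\Z$ (only the $i=1$ summand can contribute, and every class in $H^{n-1}(F_b;\Z)$ is automatically primitive since $[\w]^2$ would raise the fibre degree past $n$) is spelled out slightly more completely than in the paper's proof, which records this point only in the remark following the proposition.
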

\begin{proof}
	Let $\Delta\subset X(B)\times X(B)$ be the diagonal, which as we have seen, provides an algebraic Lagrangian cobordism between any pair of fibres $F_p$, $F_q$ in $X(B)$. We will show that the map $$\varphi_\Delta:B\longrightarrow LJ(X(B))$$ is a 
	embedding, where we have chosen a basepoint $b_0\in B$. 
	For any $b_0\neq b\in B$, we consider a line segment $\{b_0+tv\}_{t\in[0,1]}$ between $b_0$ and $b$, where $v\in TB\cong\R^n$. This traces out a Lagrangian isotopy $\{F_{b_0+tv}\}_{t\in[0,1]}$ between $F_{b_0}$ and $F_b$, and the image $\varphi_\Delta(b)\in LJ(X(B))$ can be seen as integration of $(n+1)$-forms along this isotopy, modulo integration over $(n+1)$-cycles. 
	
	We start by understanding the differential $$(\varphi_\Delta)_*:TB\longrightarrow TLJ(X(B)),$$ using the observation above. Recall that up to a global coordinate transformation, $B\cong B(Q):=\R^n/Q\cdot\Z^n$ for some $Q\in GL(n;\R)$, where $\R^n$ is considered with the tropical structure given by the integer lattice $\Z^n\subset\R^n$. Given $v\in TB\cong \R^n$, there is a corresponding element $[v]\in H_1(B;\R)$ obtained by identifying $H_1(B;\R)\cong (Q\cdot\Z^n)\otimes\R\cong TB$. Fixing a point $b\in B$, a fundamental class $[F_b]\in H_n(F_b;\Z)$, and using the Künneth isomorphism
	\begin{equation}
		\psi: \bigoplus_{k=1}^n H^k(B;\R)\otimes H^{n+1-k}(F_b;\R)\stackrel{\cong}{\longrightarrow}  H^{n+1}(X(B);\R), \notag
	\end{equation}
	 the differential is simply $$v\mapsto \psi([v]\otimes[F_b]),$$ which is injective.
	Injectivity of $\varphi_\Delta$ follows from the fact that it acts linearly with respect to scaling the vector $v$, from which we can deduce that $\varphi_\Delta(b)\neq 0 \mod H_{n+1}(X(B);\Z)$ unless $v\in H_1(B;\Z)$. 
	
	The fact that the tropical structure defined in Equation \eqref{tropical_structure_lefschetz_jacobian} makes $\varphi_\Delta$ into a tropical map follows from the observation that if $v\in T_\Z B$, then $[v]\otimes[F_b]\in Q^{-1}\cdot H_1(B;\Z)\otimes H_{n}(F_b;\Z).$ This corresponds to maps $H^{n+1}(X(B);\R)\longrightarrow\R$ which evaluate to integers on the lattice $$\psi\left(Q^{t}\cdot H^1(B;\Z)\otimes H^n(F_b;\Z)\right)\subset H^{n+1}(X(B);\R).$$ Now notice that $$[\w]\in \psi\left(Q^{t}\cdot H^1(B;\Z)\otimes H^1(F_b;\Z)\right)$$
	as it evaluates to integers on homology classes corresponding to $$Q^{-1}\cdot H_1(B;\Z)\otimes H_1(F_b;\Z).$$ In particular, $$\psi\left(Q^{t}\cdot H^1(B;\Z)\otimes H^n(F_b;\Z)\right)\subset [\w]\cup \ker([\w]^2:H^{n-1}(X(B);\Z)\longrightarrow H^{n+1}(X(B);\R)),$$ from which we conclude.
\end{proof}

\begin{remark}
	It follows from Equation \eqref{hey} that the proof of tropicality of a map $\varphi_\Gamma:B\longrightarrow LJ(M)$ induced by an algebraic Lagrangian cobordism $\Gamma\subset \overline{X(B)}\times M$ only depends on the choice of tropical structure in first component of the Lefschetz decomposition, that is in $$[\w]\cup \ker([\w]^2:H^{n-1}(M;\R)\longrightarrow H^{n+3}(M;\R)).$$ 
	Additionally, in this instance where $\Gamma=\Delta\subset \overline{X(B)}\times X(B)$, the only component relevant to the tropical structure is  $$[\w]\cup\psi\left(H^0(B;\R)\otimes H^{n-1}(F_b;\R))\right)\subset [\w]\cup \ker([\w]^2:H^{n-1}(M;\R)\longrightarrow H^{n+3}(M;\R)).$$ This is related to the fact that all of the Lagrangians considered here are fibres; one expects that the terms contained in $$[\w]\cup\psi\left(\bigoplus_{i\geq1}H^{i}(B;\R)\otimes H^{n-i-1}(F_b;\R))\right)$$ would become relevant if one were to consider e.g. Lagrangians lifts in $X(B)$ of tropical $k$-cycles in $B$ with $k\geq1$. 
\end{remark}

\section{Relating tropical and symplectic fluxes}\label{relating_trop_symp_flux}

Recall from the Introduction that our proof of Theorem \ref{intro_main_theorem} involves using Zharkov's tropical flux $\int_{\gamma_0}\Omega_0$ between $C$ and $C^-$ in $J(C)$ to exhibit a non-trivial symplectic flux $\int_{\hat{\phi}(\gamma_0)}\w^2$ between $L_C$ and $L_C^-$ in $X(J(C))$. Here $\Omega_0$ is the notation used by Zharkov to refer to the determinantal form we call $\Omega^3_{2,1}$ in Section \ref{determinantal_forms_section}, and $\gamma_0$ is a tropical chain in $C_2(J(C);T_\Z J(C))$ with $\partial\gamma_0=\overline{C}-\overline{C^-}$. The way to translate this into a symplectic flux in $X(J(C))$ is by a conormal-type construction taking $\gamma_0$ to $\hat{\phi}(\gamma_0)$, and a dual construction taking $\Omega_0$ to $\w^2$. The duality between these construction is what provides the equality $$\int_{\gamma_0}\Omega_0=\int_{\hat{\phi}(\gamma_0)}\w^2.$$

In this Section we formalise this mechanism by which tropical fluxes in a tropical torus $B$ can be translated into symplectic fluxes in the corresponding Lagrangian torus fibration $X(B)$. 

Because $B$ is a torus, this fibration is trivial, and because furthermore the local system $T_\Z^*B$ is constant, it carries a canonical trivialisation
\begin{equation}\label{trivialisation_ltf}
	t_b:X(B)\stackrel{\cong}{\longrightarrow} B\times F_b,
\end{equation} 
associated to an arbitrary choice of point $b\in B$, where $F_b:=(T^*B)_b/(T^*_\Z B)_b$. This is given by parallel transporting elements in any fibre $F_{b'}$ for $b'\in B$ to the fibre $F_b$ by the flat connection determined by the affine structure on $B$. 
In fact, recall that the local systems $T_\Z^{(*)}B$ are constant, because the linear part of the monodromy of the sheaf of affine functions $\textit{Aff}_B\subset C^\infty(B)$ vanishes (i.e., the slope of these functions is preserved by the monodromy). This implies that sections of this local system are all global, and can be identified with points in $(T^*_\Z B)_b$. With this in mind, we will repeatedly use the isomorphisms 
\begin{align}
	&H^j(B;\Lambda^kT^*_\Z B)\cong H^j(B;\Z)\otimes\Lambda^{k}(T^*_\Z B)_b & \text{and} & &H_j(B;\Lambda^{k}T_\Z B)\cong H_j(B;\Z)\otimes\Lambda^{k}(T_\Z B)_b. &  \notag
\end{align}

The product decomposition above implies a Künneth decomposition of the homology of $X(B)$ into terms of the form $H_j(B)\otimes H_k(F_b)$. Each of these is in fact isomorphic to the tropical homology groups $H_j(B;\Lambda^{n-k}T_\Z B)\cong H_j(B;\Z)\otimes\Lambda^{n-k}(T_\Z B)_b$ through 
\begin{equation}\label{chain_of_identificiations}
	(\Lambda^{n-k}T_\Z B)_b\cong (\Lambda^{n-k}T_\Z^*B)_b^*\cong H_{n-k}(F_b;\Z)^*\cong H^{n-k}(F_b;\Z)\cong H_k(F_b;\Z),
\end{equation}
where the last isomorphism is Poincaré duality, and they are all canonical after a choice of orientation on the fibres. An identical reasoning holds at the level of cohomology, where we find $$H^j(B)\otimes H^k(F_b)\cong H^j(B;\Lambda^{n-k}T_\Z^* B)$$ through the chain of isomorphisms dual to that of Equation \eqref{chain_of_identificiations}, that is
\begin{equation}\label{second_chain_identification}
	\Lambda^{n-k} (T_\Z^*B)_b\cong H_{n-k}(F_b;\Z)\cong H^k(F_b;\Z).
\end{equation}

These isomorphisms of (co)homology groups are the key to relating tropical fluxes in $B$ in the sense of Section \ref{tropical_flux} to symplectic fluxes in $X(B)$ in the sense of Section \ref{symplectic_flux}. Nevertheless, they are not sufficient, and one needs to explicitly realise them at the level of (co)chains. One reason for this is to understand what differential forms on $X(B)$ the determinantal forms are mapped to. Another is to show that tautological tropical cycles $\overline{Z}$ are mapped to something resembling a PL Lagrangian lift of $Z$, as well as understand the behaviour of chains of the form $\tilde{W}$ constructed in Section \ref{construction_tropical_chain_equivalence} under these maps.

Our first aim is thus to construct (a family of) (co)chain maps 
\begin{align}
	\hat{\phi}&:C_\bullet(B;\Lambda^kT_\Z B)\longrightarrow C_{\bullet+n-k}(X(B);\Z)\notag \\
	\hat{\psi}&:C^\bullet(B;\Lambda^kT^*_\Z B)\longrightarrow C^{\bullet+n-k}(X(B);\Z) \notag
\end{align}
whose induced maps at the level of (co)homology, 
\begin{align}
	\phi&:H_\bullet(B;\Lambda^kT_\Z B)\longrightarrow H_{\bullet+n-k}(X(B);\Z)\notag \\
	\psi&:H^\bullet(B;\Lambda^kT^*_\Z B)\longrightarrow H^{\bullet+n-k}(X(B);\Z), \notag
\end{align}
realise the Künneth-type isomorphisms described above.

\begin{remark}
	One can wonder how much of our construtions extend to more general settings, for instance when $B$ is singular, or the local system $T_\Z B$ has non-trivial monodromy. In the former situation, the constructions from this section do not translate, and our arguments are no longer valid globally. On the other hand, in a Lagrangian torus fibration which is non-singular, but with non-trivial monodromy for the local systems $T_\Z^{(*)}B$, we expect that our construction can be generalised by upgrading our maps to maps from (co)homology with \textit{local} coefficients. However, the more complicated global topology of $X(B)$ implies the maps on homology and cohomology constructed in the following sections would no longer be isomorphisms, hence the correspondence between tropical and symplectic would no longer be one-to-one.
\end{remark}

\subsection{Map on homology}\label{map_homology}

We construct a family of chain maps $$C_\bullet(B;\Lambda^kT_\Z B)\longrightarrow C_{\bullet+n-k}(X(B);\Z),$$ each depending on arbitrary choices but inducing the same map on homology. The arbitrary choice involved in our construction is a map
\begin{equation}\label{kappa}
	\kappa:H_\bullet(T^n;\Z)\longrightarrow C_\bullet(T^n;\Z) 
\end{equation}
with values in cycles satisfying $[\kappa(x)]=x$. In other words,  a splitting of the short exact sequence
\begin{equation}
	\text{im}(d:C_\bullet(F_b;\Z)\rightarrow C_\bullet(F_b;\Z)) \hookrightarrow \text{ker}(d:C_\bullet(F_b;\Z)\rightarrow C_\bullet(F_b;\Z)) \twoheadrightarrow H_\bullet(F_b;\Z), \notag
\end{equation}
which exists because $F_b\cong T^n$. Here and throughout, the point $b$ is an arbitrary point chosen in $B$.

By Equation \eqref{chain_of_identificiations}, we have isomorphisms $$C_\bullet(B;\Lambda^k T_\Z B)\cong C_\bullet(B;\Z)\otimes \Lambda^k (T_\Z B)_b\cong C_\bullet(B;\Z)\otimes H_{n-k}(F_b;\Z).$$

Therefore, given a $\kappa$ as above and the trivialisation $t$ from Equation \eqref{trivialisation_ltf}, we can define a chain map
\begin{align}
	\hat{\phi}:C_\bullet(B;\Lambda^kT_\Z B)\longrightarrow C_{\bullet+n-k}(X(B);\Z) \notag
\end{align}
as the following composition: $$C_\bullet(B;\Z)\otimes H_\bullet(F_b;\Z)\stackrel{(id,\kappa)}{\longrightarrow}C_\bullet(B;\Z)\otimes C_\bullet(F_b;\Z)\longrightarrow C_\bullet(B\times F_b;\Z)\stackrel{(t^{-1})_*}{\longrightarrow}C_\bullet(X(B);\Z),$$
where the middle arrow is the Eilenberg-Zilber map, which is a quasi-isomorphism.

The fact that this is a chain map with the differential on $C_\bullet(B;\Lambda^kT_\Z B)$ given by restricting the coefficients for the usual singular differential is a straightforward verification, and the fact that the induced map on homology doesn't depend on the choice of $\kappa$ follows from the condition that $[\kappa(x)]=x$. Therefore we have a well-defined map $$\phi:H_\bullet(B;\Lambda^kT_\Z B)\longrightarrow H_{\bullet+n-k}(X(B);\Z).$$

It is instructive to understand this map more explicitly. For this, we construct a choice of $\kappa$, and study the associated map $\hat{\phi}$ at the level of chain complexes. 
Let $\beta$ be an element in $\Lambda^k (T_\Z B)_b\cong H_{n-k}(F_b;\Z)\cong\Lambda^{n-k}(T_\Z^*B)_b$. From this perspective, it can be written (although not uniquely) as
\begin{equation}\label{choice_decomposition}
	\beta=\sum_{i=1}^se_{i,1}\wedge\dots\wedge e_{i,n-k} \in \Lambda^{n-k}(T_\Z^*B)_b
\end{equation}
for some $s\in\mathbb{N}$ and covectors $e_{i,l}\in (T^*_\Z B)_b$, where $l\in\{1,\dots,n-k\}$. Choosing such a decomposition amounts to assigning to each element in $H_{n-k}(F_b;\Z)$ a set of $s$ finite sets of $n-k$ covectors. To these $s$ finite sets, we associate $s$ parallelepipeds in $T^*B$, defined for $i\in\{1,\dots,s\}$ as $$E_i:=\{(b,t_1e_{i,1}+t_2e_{i,2}+\dots+t_{n-k}e_{i,n-k}):(t_1,\dots,t_{n-k})\in[0,1]^{n-k}\}\subset  (T^* B)_b.$$ Passing to the quotient, this yields a cycle $E_\beta$ in $F_b$ whose homology class is $\beta$. Therefore such a choice of decomposition from Equation \eqref{choice_decomposition} yields a map $\kappa$ as in Equation \eqref{kappa}. 
Let $\sigma:\Delta^j\longrightarrow B$ be a  $j$-simplex in $B$. At the chain level, $\hat{\phi}$ maps $\sigma\otimes\beta$ to $(t^{-1})_*(\sigma\times E_\beta)$ in $C_\bullet(X(B);\Z)$.

\begin{example}
	We give a simple example to show that despite its technical construction, the geometry of the maps $\hat{\phi}$ - and hence of $\phi$ - is quite simple. Consider a tropical $2$-torus $B$ with basepoint $b\in B$, and let $\sigma$ be any $1$-simplex in $B$.
	Let $v_1$ and $v_2$ be two generators of $(T_\Z B)_b$, and $v_1^*$ and $v_2^*$ their dual generators of $(T_\Z^*B)_b$. From the identification $H_1(F_b;\Z)\cong (T_\Z^*B)_b$, we can choose two $1$-cycles $\gamma_1$ and $\gamma_2$ such that$[\gamma_1]=v_1^*$ and $[\gamma_2]=v_2^*$. 
	 From this set $\kappa$ to be 
	\begin{align}
		\kappa:H_\bullet(F_b;\Z)&\longrightarrow C_\bullet(F_b;\Z) \notag \\
		[\gamma_1]\hspace{4mm}&\hspace{1mm}\mapsto \hspace{6mm}\gamma_1 \notag \\
		[\gamma_2]\hspace{4mm}&\hspace{1mm}\mapsto \hspace{6mm}\gamma_2. \notag
	\end{align}
	Now take 
	\begin{align}
		x_0&:=\sigma\in C_1(B;\Lambda^0T_\Z B)\cong C_1(B;\Z)\notag \\
		x_1&:= v_1\otimes\sigma \notag \in C_1(B;\Lambda^1T_\Z B) \\
		x_3&:= v_1\wedge v_2\otimes\sigma \in C_1(B;\Lambda^2T_\Z B).\notag
	\end{align}
	We compute the images $\hat{\phi}(x_0)$, $\hat{\phi}(x_1)$, and $\hat{\phi}(x_2)$. Through the isomorphism $$\Lambda^*T_\Z B\cong \Lambda^{n-*}T_\Z^*B\cong H_{n-*}(F_b;\Z),$$ $1\in\Lambda^0T_\Z B$ is mapped to $[\gamma_1]\wedge[\gamma_2]\in H_2(F_b;\Z)$, $v_1\in\Lambda^1 T_\Z B$ is mapped to $[\gamma_2]\in H_1(B;\Z)$, and $v_1\wedge v_2\in\Lambda^2T_\Z B$ is mapped to $1\in H_0(F_b;\Z)$. Therefore, with our choice of $\kappa$, we have
		\begin{align}
		\hat{\phi}(x_0)&:=(t^{-1})_*\left(\sigma\times(\gamma_1\wedge\gamma_2)\right)\in C_3(X(B);\Z) \notag \\
		\hat{\phi}(x_1)&:=(t^{-1})_*\left(\sigma\times\gamma_2\right) \in C_2(X(B);\Z) \notag \\
		\hat{\phi}(x_3)&:=(t^{-1})_*\left(\sigma\right)\in C_1(X(B);\Z). \notag
	\end{align}
\end{example}

\subsection{Map on cohomology}\label{map_cohomology}

Similarly, we construct families of cochain maps
$$\hat{\psi}:C^\bullet(B;\Lambda^kT_\Z^*B)\longrightarrow C^{\bullet+n-k}(X(B);\Z).$$ Again, our arbitrary choice is that of a splitting $$\kappa': H^\bullet(F_b;\Z)\longrightarrow C^\bullet(F_b;\Z)$$ with $[\kappa'(x)]=x$. This is equivalent to assigning to each element in $\Lambda^k(T_\Z^*B)_b$ a linear combination of closed $(n-k)$-forms $dx_{i_1}\wedge\dots\wedge dx_{i_{n-k}}$, where $\{x_1,\dots,x_n\}$ are coordinates on $B$.

Given such a choice, we define the cochain map $\hat{\psi}$ as the composition $$C^{\bullet}(B;\Z)\otimes H^{\bullet}(F_b;\Z)\stackrel{(id,\kappa')}{\longrightarrow}C^{\bullet}(B;\Z)\otimes C^{\bullet}(F_b;\Z)\stackrel{\text{quism}}{\longrightarrow}C^{\bullet}(B\times F_b;\Z)\stackrel{t^*}{\longrightarrow}C^{\bullet}(X(B);\Z).$$ 

Again one readily verifies that this is a cochain map (using that $\kappa'$ takes values in closed forms), and that the induced map on cohomology, which we denote $\psi$, does not depend on our choice of $\kappa'$.

Very concretely, let $\alpha$ be a $j$-form on $B$, and $\beta\in\Lambda^k (T_\Z^*B)_b$. Then using Equation \eqref{second_chain_identification}, $\beta$ can be represented by an $(n-k)$-form $\kappa'(\beta)$ on $F_b$. It follows that
\begin{equation}\label{cochain_map}
	\hat{\psi}(\beta\otimes\alpha)=t^*(\kappa'(\beta)\wedge\alpha) \in C^{j+n-k}(X(B);\Z).\notag
\end{equation}

\subsection{The pairing is preserved}

We prove that the tropical fluxes we compute on the base $B$ lift to symplectic fluxes in $X(B)$. It follows from the fact that the chains of isomorphisms in Equations \eqref{chain_of_identificiations} and \eqref{second_chain_identification} are dual to one another that the maps $\phi$ and $\psi$ preserve the pairings in the sense that given $a$ in $H_j(B;\Lambda^k T_\Z B)$ and $\alpha$ in $H^j(B;\Lambda^kT_\Z^*B)$, $$\alpha(a)=\psi(\alpha)(\phi(a)).$$ Nevertheless, we need the stronger result that an integration pairing at the level of (co)chains is also preserved.

\begin{proposition}\label{pairing_preserved}
	Let $c\in C_j(B;\Lambda^kT_\Z B)$ and $\delta\in C^j(B;\Lambda^kT_\Z^*B)$. For any choices of $\kappa$ and $\kappa'$ as in Equations \eqref{map_homology} and \eqref{map_cohomology}, the corresponding maps $\hat{\phi}$ and $\hat{\psi}$ satisfty $$\int_c \delta=\int_{\hat{\phi}(c)}\hat{\psi}(\delta).$$
\end{proposition}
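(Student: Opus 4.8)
The plan is to reduce to elementary tensors and then to an explicit Fubini computation on the product $B\times F_b$, where the fibrewise factor is governed by the de Rham incarnation of the duality between the identification chains \eqref{chain_of_identificiations} and \eqref{second_chain_identification}. First I would use bilinearity of all the maps involved (the integration pairing, $\hat{\phi}$, and $\hat{\psi}$) to reduce to a single elementary term. Writing $c=\sigma\otimes a$ with $\sigma$ a smooth $j$-simplex in $B$ and $a\in\Lambda^k(T_\Z B)_b$, and representing $\delta$ by a tropical differential form $\omega\otimes\eta$ (as the determinantal forms are), with $\omega$ an ordinary $j$-form on $B$ and $\eta\in\Lambda^k(T_\Z^*B)_b$, the tropical integration pairing factors as
$$\int_c\delta=\langle\eta,a\rangle\int_\sigma\omega,$$
where $\langle\cdot,\cdot\rangle$ is the canonical pairing between $\Lambda^k(T_\Z^*B)_b$ and $\Lambda^k(T_\Z B)_b$. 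This is the left-hand side in closed form.

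For the right-hand side I would unwind the two constructions. Recall from Section \ref{map_homology} that $\hat{\phi}(\sigma\otimes a)=(t^{-1})_*(\sigma\times E_a)$, with $E_a$ an $(n-k)$-cycle in $F_b$ realising $a$ under \eqref{chain_of_identificiations}, and from Section \ref{map_cohomology} that $\hat{\psi}(\omega\otimes\eta)=t^*\big(\kappa'(\eta)\wedge\omega\big)$, with $\kappa'(\eta)$ a closed $(n-k)$-form on $F_b$ realising $\eta$ under \eqref{second_chain_identification}. By naturality of integration under the diffeomorphism $t$ of \eqref{trivialisation_ltf}, and since $\omega$ is pulled back from $B$ while $\kappa'(\eta)$ is pulled back from $F_b$, I would apply Fubini on the product chain $\sigma\times E_a\subset B\times F_b$ to obtain
$$\int_{\hat{\phi}(c)}\hat{\psi}(\delta)=\int_{\sigma\times E_a}\mathrm{pr}_{F}^*\kappa'(\eta)\wedge\mathrm{pr}_B^*\omega=\pm\Big(\int_\sigma\omega\Big)\Big(\int_{E_a}\kappa'(\eta)\Big),$$
the sign being the Koszul sign $(-1)^{j(n-k)}$ from reordering the two pulled-back factors.

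The crux is the fibrewise factor $\int_{E_a}\kappa'(\eta)$. Because $\kappa$ takes values in cycles and $\kappa'$ in closed forms, $E_a$ is a genuine cycle and $\kappa'(\eta)$ is closed, so $\int_{E_a}\kappa'(\eta)$ depends only on the classes $[E_a]=a\in H_{n-k}(F_b;\Z)$ and $[\kappa'(\eta)]=\eta\in H^{n-k}(F_b;\Z)$; this already yields independence of the choices of $\kappa$ and $\kappa'$. It then equals the evaluation pairing on $F_b$, which I would identify with $\langle\eta,a\rangle$ by the explicit computation on the standard torus $F_b\cong\R^n/\Z^n$: integrating a wedge $dx_{i_1}\wedge\cdots\wedge dx_{i_{n-k}}$ over the subtorus spanned by the primitive integral vectors realising $a$ returns the corresponding $(n-k)\times(n-k)$ minor, which is exactly the value of $\langle\cdot,\cdot\rangle$ on decomposable elements of $\Lambda^{n-k}$. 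This is precisely the assertion that \eqref{chain_of_identificiations} and \eqref{second_chain_identification} are mutually dual, already recorded at the level of classes by $\alpha(a)=\psi(\alpha)(\phi(a))$.

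The main obstacle I anticipate is sign bookkeeping: matching the Koszul/Fubini sign against the orientation convention on the fibres $F_b$ that enters \eqref{chain_of_identificiations} through Poincaré duality, together with the ordering in the Eilenberg--Zilber map defining $\hat{\phi}$ and $\hat{\psi}$. Since the orientation on $F_b$ is a single free choice made consistently for both $\phi$ and $\psi$, I would fix it so that the fibrewise de Rham pairing reproduces $\langle\cdot,\cdot\rangle$ with the correct sign; the remaining sign then cancels the Fubini sign, giving $\int_c\delta=\int_{\hat{\phi}(c)}\hat{\psi}(\delta)$ and completing the proof.
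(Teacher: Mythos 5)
Your proposal is correct and follows essentially the same route as the paper's proof: reduction to elementary tensors $\sigma\otimes a$ and $\omega\otimes\eta$, fibrewise integration (Fubini) over the parallelepiped cycle $E_a$ in the fibre to split the integral into a base factor and a fibre factor, and the duality of the identification chains \eqref{chain_of_identificiations} and \eqref{second_chain_identification} to match the fibre factor with the coefficient pairing $\langle\eta,a\rangle$. Your observation that $\int_{E_a}\kappa'(\eta)$ depends only on the (co)homology classes, since $E_a$ is a cycle and $\kappa'(\eta)$ is closed, makes the independence of the choices of $\kappa$ and $\kappa'$ more explicit than the paper's brief remark, but it is a refinement rather than a different argument.
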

\begin{proof}
	Consider an element of $C_j(B;\Lambda^kT_\Z B)$ of the form $\beta\otimes\sigma$, where $\sigma$ is a singular $j$-chain in $B$, and $\beta$ is viewed as an element of $H_{n-k}(F_b;\Z)$, to which $\kappa$ associates a choice of representative by parallelepipeds in the fibre direction as in Section \ref{map_homology}. Similarly, let $\alpha\otimes\eta$ be an element in $C^j(B;\Lambda^kT_\Z^*B)$, where $\eta$ is a $j$-form on $B$, and $\alpha$ is viewed as an element of $H^{n-k}(F_b;\Z)$, to which $\kappa'$ associates a particular choice of closed $(n-k)$-form on the fibre $F_b$.
 	The tropical pairing of $\alpha\otimes\eta$ with $\beta\otimes\sigma$ is $$\beta(\alpha)\int_\sigma\eta.$$ From fiberwise integration over $E_\beta$, it follows that this coincides with the singular pairing $$\int_{E_\beta}\beta\wedge\eta.$$ Here we have reused the notation from Section \ref{map_homology}, where $E_\beta\in C_{j+n-k}(B\times F_b;\Z)$ is the product of $\sigma$ with the quotient by $(T_\Z^*B)_b$ of the parallelepipeds in $(T^*B)_b$. 
 	It is also true that the pairing $\alpha(\beta)$ of $(n-k)$ (co)homology classes, coincides with the pairing between $\Lambda^kT_\Z^*B$ and $\Lambda^kT_\Z B$ through the identifications from Equations \eqref{chain_of_identificiations} and \eqref{second_chain_identification}. These are dual through the canonical isomorphism $$\Lambda^kT_\Z^*B\cong(\Lambda^kT_\Z B)^*,$$ therefore the pairing is preserved. 
 	One readily verifies this is independent of the choice of $\kappa$ and $\kappa'$. 
\end{proof}

\subsection{Image of the determinantal forms and tautological tropical cycles}

Here we show that determinantal forms in $C^\bullet(B;\Lambda^k T^*_\Z B)$ are mapped through $\hat{\psi}$ to differential forms in $C^{\bullet+n-k}(X(B);\Z)$ that precisely detect flux-type obstructions to the existence of Lagrangian cobordisms in the sense of Section \ref{symplectic_flux}, and that tautological tropical cycles are mapped through $\hat{\phi}$ to something resembling a conormal bundle in $X(B)$.
Once again, we fix a basis $e_1,\dots,e_n$ of $\Gamma_2$, and the corresponding coordinates $x_1,\dots,x_n$ (unique up to translation) on $B$. Let us prescribe a map $\kappa'$ by setting
\begin{align}
	\kappa': H^\bullet(F_b;\Z)\hspace{1mm}\longrightarrow &\hspace{2mm}C^\bullet(F_b;\Z) \notag \\
	[dp_{j_1}\wedge\dots\wedge dp_{j_r}]&\mapsto dp_{j_1}\wedge\dots\wedge dp_{j_r}, \notag
\end{align}
where $p_i$ is the cotangent coordinate associated to $x_i$ in $X(B)$. In particular, composing with the isomorphism $\Lambda^k (T_\Z^*B)_b\cong H^{n-k}(F_b;\Z)$, elements of the form $$e_{j_1}^*\wedge\dots\wedge e_{j_k}^*$$ are mapped to the differential form $$(-1)^{sgn(\mu)}dp_{j_{k+1}}\wedge\dots\wedge dp_{j_{n}},$$ where $\mu$ is the permutation taking $\{1,\dots,n\}$ to $\{j_1,\dots,j_k,j_{k+1},\dots,j_{n}\}$. 
 
Notice that in each term appearing in Definition \ref{determinantal_forms}, the $n-k$ indices $\{j_{k+1},\dots,j_{n}\}$ would correspond to the union of the $j$ indices $\{s_1,\dots,s_j\}$ and the $n-(j+k)$ (ordered) indices $$\{i_{j+k+1},\dots,i_n\}:=\{1,\dots,n\}\backslash\{i_1,\dots,i_{j+k}\}\cong\{1,\dots,n\}\backslash\{j_1,\dots,j_k,s_1,\dots,s_j\}.$$ Without loss of generality, we can therefore order $\{j_{k+1},\dots,j_{n}\}$ as $\{i_{j+k+1},\dots,i_n, s_1,\dots,s_j\}$.

We denote the wedge product $dp_{j_{k+1}}\wedge\dots\wedge dp_{j_{n-j}}$ by $vol_{n-(j+k)}$, or $vol_{n-(j+k)}^{(i_1,\dots,i_{j+k})}$ if we need to emphasize the dependence on the indices $\{i_1,\dots,i_{j+k}\}$. With these notations we have $$(-1)^{sgn(\mu)}dp_{j_{k+1}}\wedge\dots\wedge dp_{j_{n}}= (-1)^{sgn(\mu)} vol_{n-(j+k)}\wedge dp_{s_1}\wedge\dots\wedge dp_{s_j}.$$

The following proposition then follows from a straightforward verification.

\begin{proposition}
	With the above notations and conventions, we have $$\hat{\psi}(\Omega^n_{j,k})=\sum_{(i_1<\dots<i_{j+k})\in\{1,\dots,n\}^{j+k}}(-1)^{sgn(\eta)+1+j(n-k)}\frac{1}{j!}vol_{n-(j+k)}^{(i_1,\dots,i_{j+k})}\wedge\w^j\in C^{j+n-k}(X(B);\Z),$$
	where $\eta$ is the permutation taking $\{1,\dots,n\}$ to $\{i_1,\dots,i_n\}$.
\end{proposition}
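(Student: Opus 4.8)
The plan is to evaluate $\hat\psi$ termwise on $\Omega^n_{j,k}$ and then reassemble the resulting monomials in the $dx_i$ and $dp_i$ into the advertised multiple of $vol^{(i_1,\dots,i_{j+k})}_{n-(j+k)}\wedge\w^j$. Fix a pair of index sets $(i_1<\dots<i_{j+k})$ and $(j_1<\dots<j_k)\subset\{i_1,\dots,i_{j+k}\}$, with complementary $(s_1<\dots<s_j)$, indexing a single summand of Definition \ref{determinantal_forms}. By the prescription for $\kappa'$ recorded just above the statement, and using the chosen ordering $\{j_{k+1},\dots,j_n\}=\{i_{j+k+1},\dots,i_n,s_1,\dots,s_j\}$, the map $\hat\psi$ sends this summand to
$$\hat\psi\bigl(e^*_{j_1}\wedge\dots\wedge e^*_{j_k}\otimes dx_{s_1}\wedge\dots\wedge dx_{s_j}\bigr)=(-1)^{sgn(\sigma)+sgn(\mu)}\,t^*\bigl(vol^{(i_1,\dots,i_{j+k})}_{n-(j+k)}\wedge dp_{s_1}\wedge\dots\wedge dp_{s_j}\wedge dx_{s_1}\wedge\dots\wedge dx_{s_j}\bigr).$$
Thus the whole computation is reduced to elementary wedge-product manipulations together with sign bookkeeping.

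Next I would interleave the block $dp_{s_1}\wedge\dots\wedge dp_{s_j}\wedge dx_{s_1}\wedge\dots\wedge dx_{s_j}$ into the paired form $dp_{s_1}\wedge dx_{s_1}\wedge\dots\wedge dp_{s_j}\wedge dx_{s_j}$, which is exactly the monomial appearing in $\tfrac{1}{j!}\w^j$ for $\w=\sum_i dp_i\wedge dx_i$; sliding each $dx_{s_m}$ next to its partner $dp_{s_m}$ costs $\tfrac{j(j-1)}{2}$ transpositions, contributing a sign $(-1)^{j(j-1)/2}$. I would then carry out the inner sum over the $k$-subsets $\{j_1,\dots,j_k\}\subset\{i_1,\dots,i_{j+k}\}$, equivalently over the complementary $j$-subsets $S=\{s_1,\dots,s_j\}$. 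The crucial observation is that although $\tfrac{1}{j!}\w^j=\sum_{s_1<\dots<s_j}dp_{s_1}\wedge dx_{s_1}\wedge\dots\wedge dp_{s_j}\wedge dx_{s_j}$ ranges over all $j$-subsets of $\{1,\dots,n\}$, wedging with $vol^{(i_1,\dots,i_{j+k})}_{n-(j+k)}=dp_{i_{j+k+1}}\wedge\dots\wedge dp_{i_n}$ annihilates every term in which some $s_m$ lies in $\{i_{j+k+1},\dots,i_n\}$, because $dp_{s_m}$ would then be repeated. Only the subsets $S\subset\{i_1,\dots,i_{j+k}\}$ survive, and these are precisely the ones indexing the inner sum of $\Omega^n_{j,k}$, so the summands of $vol^{(i_1,\dots,i_{j+k})}_{n-(j+k)}\wedge\tfrac{1}{j!}\w^j$ match the images under $\hat\psi$ one-to-one.

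The last and most delicate step is reconciling the signs: for each fixed $(i_1<\dots<i_{j+k})$ I must show that the accumulated sign $(-1)^{sgn(\sigma)+sgn(\mu)+j(j-1)/2}$ agrees modulo $2$, \emph{independently of $S$}, with the single exponent $(-1)^{sgn(\eta)+1+j(n-k)}$ attached to that summand in the statement. The heart of this is the permutation identity $sgn(\mu)\equiv sgn(\eta)+sgn(\sigma)+j(n-j-k)\pmod 2$, which I would prove by transforming the listing $(i_1,\dots,i_{j+k},i_{j+k+1},\dots,i_n)$ defining $\eta$ into the listing $(j_1,\dots,j_k,i_{j+k+1},\dots,i_n,s_1,\dots,s_j)$ defining $\mu$ in two moves: first rearranging the initial $I$-block into $(j_1,\dots,j_k,s_1,\dots,s_j)$, which contributes $sgn(\sigma)$, and then sliding the block $(s_1,\dots,s_j)$ past $(i_{j+k+1},\dots,i_n)$, which contributes $j(n-j-k)$ transpositions. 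Substituting this identity cancels the $S$-dependent quantities $sgn(\sigma)$ and $sgn(\mu)$ (their sum becomes $sgn(\eta)+j(n-j-k)$) and leaves a purely numerical congruence in $j,k,n$ to be checked against the claimed exponent.

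I expect this sign reconciliation to be the only genuine obstacle; everything preceding it is the multilinear algebra of $\hat\psi$ and $\kappa'$ already set up above, and the vanishing argument for $vol\wedge\w^j$ is immediate. As a concrete sanity check one may verify the case $n=3$, $j=2$, $k=1$ of primary interest for Theorem \ref{intro_main_theorem} directly: here $I^c=\varnothing$, so $vol_0=1$, the three summands of $\Omega^3_{2,1}$ map to $-\tfrac12\w^2$, in agreement with the stated exponent $(-1)^{sgn(\eta)+1+j(n-k)}=(-1)^{0+1+4}=-1$.
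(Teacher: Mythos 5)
Your termwise strategy is exactly the ``straightforward verification'' the paper invokes without writing out, and the steps you do detail all check: the image of each summand under $\hat\psi$ with the prescribed $\kappa'$ and ordering convention, the interleaving sign $(-1)^{j(j-1)/2}$, the annihilation of all cross terms of $\tfrac1{j!}\w^j$ against $vol^{(i_1,\dots,i_{j+k})}_{n-(j+k)}$ (so that exactly the inner sum of Definition \ref{determinantal_forms} survives), and the permutation identity $sgn(\mu)\equiv sgn(\eta)+sgn(\sigma)+j(n-j-k)\pmod 2$, which I verified by the same two-move argument.

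The genuine gap is the one step you defer. Carrying your identity through, the accumulated exponent is $sgn(\eta)+j(n-j-k)+j(j-1)/2$ (taking $\w=\sum_i dp_i\wedge dx_i$), and the congruence with the stated exponent $sgn(\eta)+1+j(n-k)$ reduces to $1+j(j+1)/2\equiv 0\pmod 2$, which holds if and only if $j\equiv 1,2\pmod 4$. So the ``purely numerical congruence'' you postpone is \emph{not} an identity in $j,k,n$: for instance, with $n=3$, $j=3$, $k=0$ one computes directly $\hat\psi(dx_1\wedge dx_2\wedge dx_3)=dp_1\wedge dp_2\wedge dp_3\wedge dx_1\wedge dx_2\wedge dx_3=-\tfrac{1}{3!}\w^3$, whereas the stated sign is $(-1)^{0+1+9}=+1$. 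Adopting the opposite convention $\w=\sum_i dx_i\wedge dp_i$ multiplies the result by $(-1)^j$ and merely moves the failure to $j\equiv 0,1\pmod 4$; the case $j\equiv 0\pmod 4$ fails under both conventions, so the discrepancy cannot be blamed on the symplectic sign convention. The resolution is that the Proposition's exponent as printed contains a sign slip, and the correct general statement carries the exponent $sgn(\eta)+j(n-j-k)+j(j-1)/2$; the two agree in the only case used downstream ($n=3$, $j=2$, $k=1$), where your sanity check $-\tfrac12\w^2$ is indeed correct -- note it simultaneously exposes a sign slip in Example \ref{image_of_Zharkov_determinantal_form}, which asserts $+\tfrac12\w^2$. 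None of this affects the flux or torsion conclusions, since periods and torsion statements are invariant under an overall sign, but a complete proof must actually finish the congruence and either record the corrected exponent or restrict the range of $j$, rather than assume the bookkeeping closes as claimed.
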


\begin{example}\label{image_of_Zharkov_determinantal_form}
	With this prescribed map, the image of Zharkov's determinantal form $$\Omega_{2,1}^3:=e_1^*\otimes dx_2\wedge dx_3 - e_2^*\otimes dx_1\wedge dx_2+e_3^*\otimes dx_1\wedge dx_2$$ is simply $\hat{\psi}=\frac{1}{2}\w^2$ in $X(J(C))$.
\end{example}

We can similarly prescribe a map $\kappa$ as
\begin{align}
	\kappa: H_\bullet(F_b;\Z)\hspace{1mm}\longrightarrow &\hspace{2mm}C_\bullet(F_b;\Z) \notag \\
	[e^*_{j_1}\wedge\dots\wedge e^*_{j_r}]\mapsto&\hspace{1mm} e^*_{j_1}\wedge\dots\wedge e^*_{j_r}. \notag
\end{align}
Given an algebraic $k$-cycle on the base, every $k$-cell $\sigma$ in its support endowed with its tautological framing is mapped through $\hat{\phi}$ to the quotient by the lattice $T^*_\Z B$ of the conormal bundle over $\sigma$. This follows from the isomorphism $$\Lambda^k T_\Z B\cong\Lambda^{n-k}T^*_\Z B,$$ which maps $v_{i_1}\wedge\dots\wedge v_{i_k}\in \Lambda^k T_\Z B$ to $(-1)^{sgn(\mu)}v^*_{i_{k+1}}\wedge\dots\wedge v^*_{i_n}\in\Lambda^{n-k}T_\Z^*B$, where $\mu$ is the permutation $\mu(\{1,\dots,n\})=\{i_1,\dots,i_n\}$. This feature is crucial to our construction because it establishes a close relation between the image through $\hat{\phi}$ of $k$-cycles on the base, and their Lagrangian lifts in $X(B)$.

\begin{example}
	 Consider a $1$-cell $e$ in $B$ with $\partial e= b_1-b_2$; this defines an element in $C_1(B;\Z)$ whose image through $\hat{\phi}$ is an $(n+1)$-cycle $\hat{\phi}(e)\cong T^n\times[0,1]$ with boundary $F_{b_1}-F_{b_2}$. In this situation, the appropriate determinantal form is $$\Omega^n_{1,0}=\sum_{i\in\{1,\dots,n\}}dx_i,$$ which is mapped through $\hat{\psi}$ to $$\sum_{i\in\{1,\dots,n\}}vol_{n-1}^i\wedge\w,$$ where $vol_{n-1}^i=dp_1\wedge\dots\wedge dp_{i-1}\wedge dp_{i+1}\wedge\dots\wedge dp_n$.
	 Because $e$ is tropical, a vector $v=(v_1,\dots,v_n)$ in $TB$ tangent to $e$ has rational slope. Therefore, we can (non-canonically) decompose the base $B\cong T^n$ as $T^{n-1}\times S^1$ in such a way that $v$ tangent to the $S^1$-direction. We denote by $\theta$ the coordinate on this $S^1$-factor. Then the tropical pairing $\int_e\Omega^n_{1,0}$ corresponds precisely to $\int_{(T^{n-1}\times S^1)\times[0,1]}vol_{n-1}\wedge d\theta\wedge dt,$ with $t$ being the coordinate along $[0,1]$ and $vol_{n-1}$ a volume form on the $T^{n-1}$-component; this matches the symplectic flux used in Proposition \ref{cotangent_torus_flux}.
\end{example}

\begin{remark}
	From Remark \ref{no_trop_flux_0_cycles}, we know that tropical flux does not obstruct algebraic equivalence for $0$-cycles. It does, however, yield symplectic flux obstructing Lagrangian cobordisms between the corresponding fibres.
	In the case of Remark \ref{no_trop_flux_0_cycles}, the corresponding symplectic flux between $F_p$ and $F_q$ in $X(S^1)\cong T^2$ is  $$\int_C\w,$$ where $C$ is a $2$-cylinder with $\partial C=F_p-F_q$. It implies that $F_p$ and $F_q$ are not oriented Lagrangian cobordant unless $p=q$.
\end{remark}

\section{The Lagrangian Ceresa cycle}\label{Lag_Ceresa_section}

Recall the tropical Ceresa cycle of a tropical curve $C$ with basepoint $p\in C$ is $$AJ_p(C)-(-1)_*AJ_p(C)\in \Gr_1(J(C)),$$ and when there is no ambiguity from the choice of $p$ we often abuse notation and simply write $C-C^-$. If they exist, we denote by $L_C$ the Lagrangian lift of $C$, and by $L_C^-$ the Lagrangian lift of $C^-$. We then define the \textit{Lagrangian Ceresa cycle} as the Lagrangian $L_C-L_C^-$ inside $X(B)$. As mentioned in the introduction, for $C$ hyperelliptic, choosing $p$ to be a Weierstrass point implies $AJ_p(C)=(-1)_*AJ_p(C)$, therefore $L_C=L_C^-$. This means that the simplest non-trivial case to study the Lagrangian Ceresa is a non-hyperelliptic curve $C$ with $g(C)=3$, and here we restrict ourselves to the type $K4$ curve from Figure \ref{K4_basis}. 

 \begin{figure}[h!]
	\centering
	\includegraphics[width=2in]{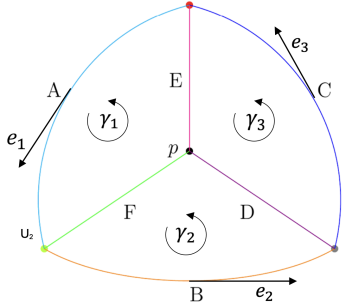} 
	\caption{A curve of type $K4$ with basis $\{e_1,e_2,e_3\}$ of $T_\Z C$ and $\{\gamma_1,\gamma_2,\gamma_3\}$ of $H_1(C;\Z)$.}
	\label{K4_basis}
\end{figure}

\subsection{The polarised symplectic torus $X(J(C))$}

Generally, given any tropical torux $B=B(Q)$ for some matrix $Q\in GL(n;\R)$, the periods of the symplectic form on  $X(B):=T^*B/T^*_\Z B$ are given by the entries of the matrix $Q$. To see this, consider global coordinates $\{x_1,x_2,x_3,p_1,p_2,p_3\}$ on $X(B)$, where the $p_i$ are cotangent coordinates associated to the $x_i$. The nine non-vanishing periods of $\w$ correspond to integrals over generators of $$H_1(B;\Z)\otimes H_1(F_b;\Z)\subset H_2(X(B);\Z),$$ where $F_b$ is a torus fibre at any point $b\in B$, and we have implicitly used the isomorphism $X(B)\cong B\times F_b$. It follows from the construction of $B(Q)$ that generators of $H_1(F_b;\Z)$ can be identified with unit vectors $\{e_1,e_2,e_3\}$ in $T^*_\Z B\cong\R^3$, while generators of $H_1(B;\Z)$ are identified with $\{Q\cdot e_1,Q\cdot e_2,Q\cdot e_3\}=\{\gamma_1,\gamma_2,\gamma_3\}$. The integral of  $\w$ over the generator $Q\cdot e_i\otimes e_j$ of $H_1(B;\Z)\otimes H_1(F_b;\Z)$ is thus the symplectic area of the surface in $X(J(C))$ tangent to the vectors $(Q\cdot e_i,0,0,0)$ and $(0,0,0,e_j)$, which is simply $(Q\cdot e_i)_j$.

In the particular case where $B=J(C)$ is the Jacobian of a tropical curve, Remark \ref{periods_jacobians_polarisation_form} tells us that the matrix $Q$ is the polarisation matrix of the curve. In particular, it is symmetric and positive-definite. For the $K4$ curve from Figure \ref{K4_basis}, the periods of the symplectic form on $X(J(C))$ are therefore integral combinations of the six different entries of the polarisation matrix 
\begin{equation}
	Q=	\begin{pmatrix}
		a+e+f & -f & -e \\ -f & b+d+f & -d \\ -e & -d & c+d+e
	\end{pmatrix}.\notag
\end{equation}
Explicitly, in terms of the dual basis $\{\gamma_1^*,\gamma_2^*,\gamma_3^*\}$ of $H^1(B;\Z)$ and $\{e_1^*,e_2^*,e_3^*\}$ of $H^1(F_b;\Z)$, the class of the symplectic form is
\begin{align}
	[\w]=&(a+e+f)\gamma_1^*\otimes e_1^*+(b+d+f)\gamma_2^*\otimes e_2^*+(c+d+e)\gamma_3^*\otimes e_3^* \notag \\&-f(\gamma_2^*\otimes e_1^*+\gamma_1^*+e_2^*)-e(\gamma_3^*\otimes e_1^*+\gamma_1^*\otimes e_3^*)-d(\gamma_2^*\otimes e_3^*+\gamma_3^*\otimes e_2^*)\in H^2(X(B);\R). \notag
\end{align}

\subsection{The $3$-manifold $L_C-L_C^-$}\label{Lag_ceresa_topology}

Because the $K4$ curve $C$ is trivalent, we know from constructions of Mikhalkin and Matessi \cite{matessi_lag_pair_of_pants,mikhalkin_trop_to_Lag_corresp} that it admits a Lagrangian lift $L_C$ to $X(J(C))$. Therefore $L_C-L_C^-$ exists, and is a Lagrangian $3$-manifold inside a symplectic $6$-torus $X(J(C))$. 

Such a lift is not unique; we construct one after choosing small open balls $U_i$ around the vertices of $C$. Outside these open subsets around each vertex, the edges simply lift to $(0,1)\times T^2$ topologically. Because the balancing condition implies that each trivalent vertex is contained locally in a $2$-plane in an open subset of $\R^3$, the Lagrangian lift around a trivalent vertex is topologically $S^1\times S^2_3$, where $S^2_3$ denotes a $2$-sphere with three punctures. 

Figure \ref{Lag_lift_oh} shows a part of the Lagrangian lift of $C$, namely the part corresponding to the generator $\gamma_1$ of $H_1(C;\Z)$. The rest can be assembled in the same way. We have chosen coordinates ${x_1,x_2,x_3}$ on $J(C)$ induced by the vectors $\{e_1,e_2,e_3\}$ from Figure \ref{K4_basis}, and ${p_1,p_2,p_3}$ their associated cotangent coordinates in $T^*(J(C))$. Then $\{x_1,x_2,x_3,p_1,p_2,p_3\}$ canonically induce global coordinates for $X(J(C))\cong T^6$. 

 \begin{figure}[htb]
	\centering
	\includegraphics[width=6.6in]{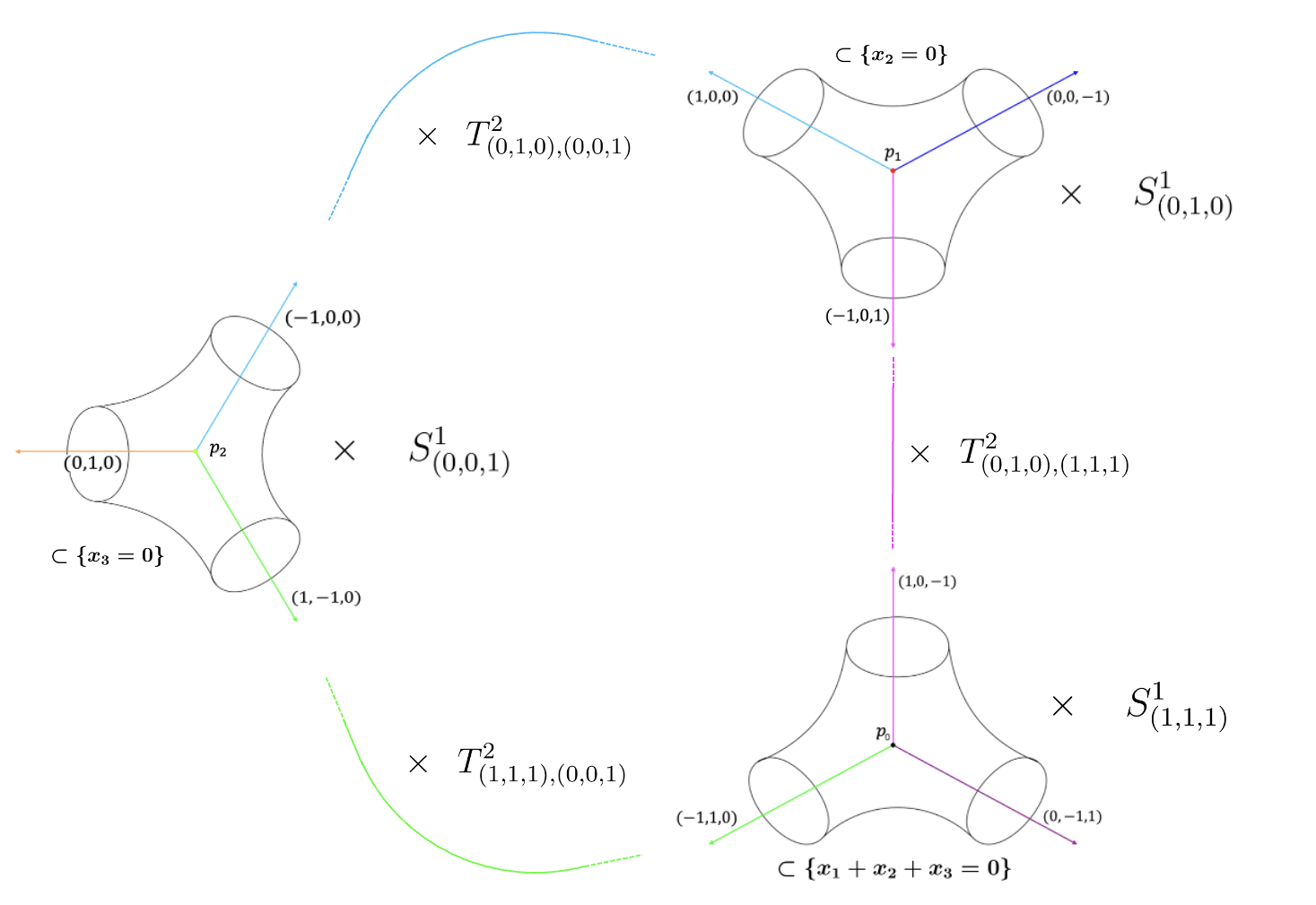} 
	\caption{A representation of a part of the Lagrangian lift of $C$ inside $X(B)\cong B\times T^3$. Each of the pairs of pants is associated to a trivalent vertex contained in a $2$-plane inside $B$ indicated on the figure. $S^1_{(p_1,p_2,p_3 )}$ denotes the $S^1$-factor in the fibre direction generated by the line $(p_1,p_2,p_3)$ in $(T^*B)_b$. Similarly, $T^2_{(p_1,p_2,p_3),(q_1,q_2,q_3)}$ denotes the $2$-torus in a fibre generated by the linear span of the vectors $(p_1,p_2,p_3)$ and $(q_1,q_2,q_3)$ in $(T^*B)_b$.}
	\label{Lag_lift_oh}
\end{figure}

The homology of $L_C$ can be computed by iteratively applying Mayer-Vietoris using the cover represented in Figure \ref{Lag_lift_oh}, from which one finds $b_1(L_C)=b_2(L_C)=6$ and $H_3(L_C)\cong H_0(L_C)\cong\Z$. In particular, $L_C$ is orientable. 

The Lagrangian lift $L_C^-$ of $C_C^-$ is simply the image of $L_C$ by a global symplectomorphism. Indeed, $C^-$ is the image of $C$ by the automorphism $(-1)$ of $B$. Because $(-1)$ is tropical, the induced symplectomorphism on $T^*J(C)$ yields a symplectomorphism $\varphi^-$ of $T^*J(C)/T^*_\Z J(C)$. Notice $L_C$ and $L_C^-$ are not disjoint, therefore the Lagrangian Ceresa cycle is only immersed. 

Although we know the Ceresa cycle is nullhomologous (see Remark \ref{Ceresa_nullhomologous}), it may not yet be clear that the Lagrangian Ceresa cycle is. This fact will follow from our proof of Theorem \ref{symplectic_Ceresa} (see Remark \ref{i_swear_this_theorem_is_not_trivial}), nevertheless we can already exhibit some supporting evidence. Tropically, $(-1)$ acts as $-\Id$ on $H_1(J(C);\Z)$ and $T_\Z^*J(C)\cong H_2(F;\Z)$, therefore it acts trivially on $H_1(J(C))\otimes H_2(F;\Z)$. We know this homology group contains $\hat{\phi}(\overline{C})$, and it will follow from the proof of Theorem \ref{symplectic_Ceresa} that it also contains $L_C$.

\subsection{Main result}

Here we prove the following Theorem:
\begin{theorem}\label{symplectic_Ceresa}
	For a generic tropical curve $C$ of type $K4$ with $g(C)=3$, the Lagrangian Ceresa cycle $L_C-L_C^-\in\algcob^{or}(X(J(C)))_{hom}$ has infinite order.
\end{theorem}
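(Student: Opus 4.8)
The plan is to detect the infinite order of $L_C-L_C^-$ by a single symplectic flux morphism. Since $X(J(C))$ is a $6$-torus we are in the case $n=3$, so the only closed form available to Theorem \ref{algcob_flux_theorem} is $\beta=1\in\Omega^{0}(X(J(C)))$, producing the homomorphism
$$\tilde{\Theta}_1:\algcob^{or}(X(J(C)))_{hom}\longrightarrow \R/\mathcal{P}(\w^2),\qquad L\mapsto \int_{\gamma}\w^2,$$
where $\partial\gamma=L$ and $\mathcal{P}(\w^2)=[\w^2]\cdot H_4(X(J(C));\Z)$. It therefore suffices to exhibit a $4$-chain bounding $L_C-L_C^-$ over which $\w^2$ integrates to an element of infinite order in $\R/\mathcal{P}(\w^2)$.

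First I would confirm that $L_C-L_C^-$ lies in the domain, i.e.\ is nullhomologous. Using the chain map $\hat{\phi}$ of Section \ref{map_homology}, the tautological tropical cycle $\overline{C}$ is carried to a chain representing $L_C$, whose homology class lives in the summand $H_1(J(C))\otimes H_2(F_b)\subset H_3(X(J(C)))$. Since $(-1)$ acts as $-\Id$ on both $H_1(J(C))$ and $T_\Z^*J(C)\cong H_2(F_b)$, it acts as the identity on this tensor summand; hence $[L_C]=[L_C^-]$ and $L_C-L_C^-$ is homologically trivial, as promised in Section \ref{Lag_ceresa_topology}.

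Next I would build the bounding chain and compute. Because $\hat{\phi}$ is a chain map and $\partial\gamma_0=\overline{C}-\overline{C^-}$ for Zharkov's explicit $2$-chain $\gamma_0\in C_2(J(C);T_\Z J(C))$, the $4$-chain $\hat{\phi}(\gamma_0)\in C_4(X(J(C));\Z)$ satisfies $\partial\hat{\phi}(\gamma_0)=\hat{\phi}(\overline{C})-\hat{\phi}(\overline{C^-})=L_C-L_C^-$. Invoking that the integration pairing is preserved (Proposition \ref{pairing_preserved}), together with $\hat{\psi}(\Omega^3_{2,1})=\tfrac12\w^2$ (Example \ref{image_of_Zharkov_determinantal_form}) and Zharkov's value $\int_{\gamma_0}\Omega^3_{2,1}=-ad$, one obtains
$$\int_{\hat{\phi}(\gamma_0)}\w^2=2\int_{\hat{\phi}(\gamma_0)}\hat{\psi}(\Omega^3_{2,1})=2\int_{\gamma_0}\Omega^3_{2,1}=-2ad.$$
The same pairing identity applied to homology classes shows the period lattice transforms compatibly: $\mathcal{P}(\w^2)=[\w^2]\cdot H_4(X(J(C));\Z)=2\,\mathcal{P}^3_{2,1}$, where $\mathcal{P}^3_{2,1}$ is the period lattice of $\Omega^3_{2,1}$ computed in Proposition \ref{periods_computation}.

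Finally, $\tilde{\Theta}_1$ being a group homomorphism, $\tilde{\Theta}_1(m(L_C-L_C^-))=m\cdot[-2ad]$ in $\R/2\,\mathcal{P}^3_{2,1}$, and this is nonzero for every $m\geq 1$ precisely when $ad$ does not lie in the $\Q$-span of $\mathcal{P}^3_{2,1}$. This is exactly the genericity input underlying Zharkov's strengthening of Theorem \ref{Zharkov_theorem} to an infinite-order statement, and it holds for generic edge lengths $a,\dots,f$; hence $L_C-L_C^-$ has infinite order. I expect the genuine obstacle to be the identification $\hat{\phi}(\overline{C})=L_C$ on the nose, so that $\partial\hat{\phi}(\gamma_0)$ is literally $L_C-L_C^-$ rather than merely homologous to it: this requires matching the conormal-type image of the tautological framing under $\hat{\phi}$ with the explicit Mikhalkin--Matessi lift described in Section \ref{Lag_ceresa_topology}, and is where the careful bookkeeping of orientations and of the pair-of-pants local models must be carried out.
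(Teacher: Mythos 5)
Your proposal follows the paper's argument in all essentials: the same flux morphism (the paper phrases it via Lemma \ref{cob_implies_periods} with $\alpha=\tfrac12\w$ rather than Theorem \ref{algcob_flux_theorem} with constant $\beta$, a normalization difference only), the same bounding chain $\hat{\phi}(\gamma_0)$, the same use of Proposition \ref{pairing_preserved} together with $\hat{\psi}(\Omega^3_{2,1})=\tfrac12\w^2$ from Example \ref{image_of_Zharkov_determinantal_form}, and the same identification of period lattices via $\phi,\psi$ being isomorphisms; your factor-of-two bookkeeping ($-2ad$ against $2\,\mathcal{P}^3_{2,1}$) is equivalent to the paper's ($-ad$ against $\mathcal{P}^3_{2,1}$ for $\tfrac12\w^2$). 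However, the step you flag at the end as ``the genuine obstacle'' is a genuine gap, and the fix you anticipate would fail: $\hat{\phi}(\overline{C})$ is \emph{not} equal to $L_C$ on the nose, and no orientation bookkeeping can make it so, because $\hat{\phi}(\overline{C})$ is by construction a piecewise-linear conormal-type chain (products of simplices with flat subtori of the fibres), while $L_C$ is the \emph{smooth} Mikhalkin--Matessi lift built from smooth Lagrangian pairs-of-pants near the trivalent vertices. As written, $\partial\hat{\phi}(\gamma_0)$ is only homologous to $L_C-L_C^-$, which does not suffice to evaluate $\tilde{\Theta}_1(L_C-L_C^-)$; note that your verification that $L_C-L_C^-$ is nullhomologous rests on the same unproved identification $[L_C]=\phi[\overline{C}]$.

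The paper closes this gap with an interpolation argument, which is the one idea missing from your write-up. By \cite[Corollary 3.19]{matessi_lag_pair_of_pants}, the smooth pairs-of-pants come in a one-parameter family of Lagrangians converging to the piecewise-linear lift, which by inspection coincides with $\hat{\phi}(\overline{C})$. This family sweeps a $4$-chain $\gamma$ interpolating between $L_C$ and $\hat{\phi}(\overline{C})$, and crucially $\w^2$ vanishes identically on $\gamma$: at each point its tangent space contains a $3$-dimensional isotropic subspace, and $\w^2$ annihilates any $4$-tuple of vectors three of which span an isotropic subspace. The same construction gives $\gamma^-$ for $L_C^-$. Then $\gamma\cup\hat{\phi}(\gamma_0)\cup\gamma^-$ is an honest $4$-chain with boundary $L_C-L_C^-$ (this is also what proves homological triviality, cf.\ Remark \ref{i_swear_this_theorem_is_not_trivial}), and $\int_{\gamma\cup\hat{\phi}(\gamma_0)\cup\gamma^-}\tfrac12\w^2=\int_{\hat{\phi}(\gamma_0)}\tfrac12\w^2=-ad$. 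With this step inserted, the remainder of your argument --- the period-lattice identification and the genericity statement that $ad$ is non-torsion modulo $\mathcal{P}^3_{2,1}$ --- goes through exactly as in the paper.
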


From which the following Corollary is immediate from Remark \ref{cob_implies_algcob}:

\begin{corollary}\label{symplectic_Ceresa_weaker_cobordism_statement}
	For a generic tropical curve $C$ of type $K4$ with $g(C)=3$, the Lagrangian Ceresa cycle $L_C-L_C^-\in\Cob^{or}(X(J(C)))_{hom}$ has infinite order.
\end{corollary}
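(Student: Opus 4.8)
The plan is to detect the class $L_C - L_C^-$ by a single symplectic flux morphism, chosen as simply as possible. Since $\dim X(J(C)) = 6$ we have $n = 3$, so an $(n-3)$-form on $M := X(J(C))$ is just a function; taking the constant $\beta = 1$ in Theorem \ref{algcob_flux_theorem} produces a group homomorphism
\[
\tilde{\Theta}_1 : \algcob^{or}(X(J(C)))_{hom} \longrightarrow \R/\mathcal{P}(\w^2),
\]
sending a homologically trivial class $L$ to $\int_{\gamma_0}\w^2$ for any $4$-chain $\gamma_0$ with $\partial\gamma_0 = L$. The entire argument reduces to evaluating $\tilde{\Theta}_1(L_C - L_C^-)$ and showing it has infinite order in $\R/\mathcal{P}(\w^2)$; since $\tilde{\Theta}_1$ is a homomorphism, infinite order of the image forces infinite order of $L_C - L_C^-$.

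First I would produce an explicit bounding $4$-chain by lifting Zharkov's $2$-chain. Recall from Section \ref{relating_trop_symp_flux} that $\gamma_0 \in C_2(J(C);T_\Z J(C))$ satisfies $\partial\gamma_0 = \overline{C} - \overline{C^-}$. Applying the chain map $\hat{\phi} : C_\bullet(J(C);\Lambda^1 T_\Z J(C)) \to C_{\bullet+2}(X(J(C));\Z)$ of Section \ref{map_homology} (with $k=1$, $n-k=2$) gives a $4$-chain $\hat{\phi}(\gamma_0)$ with $\partial\hat{\phi}(\gamma_0) = \hat{\phi}(\overline{C}) - \hat{\phi}(\overline{C^-})$. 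The conormal-type description at the end of Section \ref{relating_trop_symp_flux} identifies $\hat{\phi}(\overline{C})$ with the Matessi--Mikhalkin lift $L_C$ and $\hat{\phi}(\overline{C^-})$ with $L_C^-$, so $\partial\hat{\phi}(\gamma_0) = L_C - L_C^-$. In particular this exhibits $L_C - L_C^-$ as a boundary, which settles its homological triviality and places it in $\algcob^{or}(X(J(C)))_{hom}$.

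With the bounding chain in hand I would compute the flux using pairing preservation. By Example \ref{image_of_Zharkov_determinantal_form} one has $\hat{\psi}(\Omega^3_{2,1}) = \tfrac12\w^2$, and Proposition \ref{pairing_preserved} gives $\int_{\hat{\phi}(\gamma_0)}\hat{\psi}(\Omega^3_{2,1}) = \int_{\gamma_0}\Omega^3_{2,1}$. Combining these with Zharkov's computation $\int_{\gamma_0}\Omega^3_{2,1} = -ad$ yields
\[
\int_{\hat{\phi}(\gamma_0)}\w^2 \;=\; 2\int_{\gamma_0}\Omega^3_{2,1} \;=\; -2ad .
\]
Since $\w = \sum_{i,j} Q_{ij}\,\gamma_i^*\otimes e_j^*$ lives in the Künneth summand $H^1(J(C))\otimes H^1(F_b)$, the form $\w^2$ pairs nontrivially only with the summand $H_2(J(C))\otimes H_2(F_b)$, which is exactly the image of $\phi$; hence pairing preservation on homology classes identifies $\mathcal{P}(\w^2)$ with $2\,\mathcal{P}^3_{2,1}$. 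Consequently $-2ad$ has finite order in $\R/\mathcal{P}(\w^2)$ if and only if $-ad \in \mathcal{P}^3_{2,1}\otimes\Q$. By Proposition \ref{periods_computation} this $\Q$-vector space is the span of the six minors of the polarisation matrix $Q$; for generic edge lengths $a,\dots,f$ the monomial $ad$ is not a rational combination of these minors, so $[-2ad]$ has infinite order. Therefore $L_C - L_C^-$ has infinite order in $\algcob^{or}(X(J(C)))_{hom}$, and Corollary \ref{symplectic_Ceresa_weaker_cobordism_statement} follows at once via Remark \ref{cob_implies_algcob}.

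The hard part will be the chain-level identification in the second step: verifying that $\hat{\phi}(\overline{C})$ is genuinely the smooth Lagrangian lift $L_C$ (and not merely some $3$-cycle in the same homology class) and that the boundary identity $\partial\hat{\phi}(\gamma_0) = L_C - L_C^-$ holds on the nose. This requires matching the quotiented conormal parallelepipeds output by $\hat{\phi}$ to the pair-of-pants model near each trivalent vertex described in Section \ref{Lag_ceresa_topology}, and checking that the two embedded pieces assemble with the correct orientations. Everything else is a formal consequence of the machinery already assembled, so this compatibility with the Matessi--Mikhalkin construction is the crux of the proof.
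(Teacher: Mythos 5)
Your overall strategy coincides with the paper's: push Zharkov's $2$-chain through $\hat{\phi}$, use $\hat{\psi}(\Omega^3_{2,1})=\tfrac12\w^2$ and Proposition \ref{pairing_preserved} to get the flux value $-ad$ (up to the factor of $2$, which you track correctly), identify the periods via the K\"unneth summand argument, and descend to $\Cob^{or}$ via Remark \ref{cob_implies_algcob}. But there is a genuine gap at exactly the step you flag as the crux, and the fix is not the one you propose. You assert that $\hat{\phi}(\overline{C})$ \emph{is} the Matessi--Mikhalkin lift $L_C$ on the nose, so that $\partial\hat{\phi}(\gamma_0)=L_C-L_C^-$ as chains, and you plan to verify this by matching the quotiented conormal parallelepipeds to the pair-of-pants model near each trivalent vertex. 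That verification cannot succeed: $\hat{\phi}(\overline{C})$ is by construction a \emph{piecewise-linear} cycle (products of simplices with flat subtori of the fibres), whereas $L_C$ is a smooth embedded Lagrangian whose local model near a vertex is $S^1\times S^2_3$, not a union of conormal parallelepipeds. The two agree only away from the vertices, and they are distinct chains, so $\hat{\phi}(\gamma_0)$ does not bound $L_C-L_C^-$.

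The paper closes this gap differently, and this is the one idea missing from your proposal: by \cite[Corollary 3.19]{matessi_lag_pair_of_pants}, the smooth pair-of-pants lifts come in a one-parameter family converging to the piecewise-linear lift, which by inspection coincides with $\hat{\phi}(C)$. The trace of this family is a $4$-chain $\gamma$ with $\partial\gamma=L_C-\hat{\phi}(C)$ on which $\w^2$ vanishes identically, since at every point of $\gamma$ three of any four tangent vectors lie in a Lagrangian plane, so every term of $\w^2(v_1,v_2,v_3,w)$ contains a factor $\w(v_i,v_j)=0$; similarly one gets $\gamma^-$ for the $(-1)$-side. The corrected bounding chain is $\gamma\cup\hat{\phi}(\gamma_0)\cup\gamma^-$, whose boundary is genuinely $L_C-L_C^-$ (this is also what establishes homological triviality of the Lagrangian Ceresa cycle, cf. Remark \ref{i_swear_this_theorem_is_not_trivial} --- your claim that $\partial\hat{\phi}(\gamma_0)=L_C-L_C^-$ already settles this is accordingly premature), and the interpolating pieces contribute zero flux, so the value $-2ad$ and the rest of your argument survive unchanged. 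In short: replace the impossible chain-level identification by the convergence-plus-vanishing argument, and your proof becomes the paper's.
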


This result builds on Zharkov's proof of the corresponding statement about the tropical Ceresa cycle $C-C^-\in \Gr_1(J(C))$ being non-torsion (Theorem \ref{Zharkov_theorem}). There, Zharkov constructs a tropical $2$-chain $\gamma_0$ in $C_2(J(C),T_\Z J(C))$ satisfying $\partial\gamma_0=\overline{C}-\overline{C^-}$. He shows that $$\int_{\gamma_0}\Omega^3_{2,1}=-ad,$$ which for a generic choice of edge lengths $\{a,b,c,d,e,f\}$ is not torsion modulo the period lattice computed in Example \ref{periods_computation}.

To be more precise, he constructs a $2$-chain $\gamma_0'$ in $C_2(J(C);T_\Z J(C))$ between $AJ_p(C)$ and $AJ_{p'}(C)^-$ in $J(C)$, where $p$ and $p'$ are chosen so that the images of the edge $C$ in $AJ_p(C)$ and $AJ_{p'}(C)^-$ coincide. For instance, take $p=p_1$ and $p'=p_3$ (see Figure \ref{K4_basis}). 
Notice that $AJ_p(C)$ and $AJ_{p'}(C)$ are algebraically equivalent for any $p$ and $p'$ in $C$: this is realised by the equivalence $$W:=\{p,AJ_p(C)\}\subset C\times J(C),$$ which one verifies is a tropical cycle.

With this in mind, when we refer to the tropical chain $\gamma_0$, we will really mean $\tilde{W}\cup\gamma'_0$, where $\tilde{W}\in C_2(J(C);T_\Z J(C))$ is the tropical chain constructed in \ref{construction_tropical_chain_equivalence} from $W$, which satisfies $$\int_{\tilde{W}}\Omega_0=0$$ by Lemma \ref{determinantal_form_vanishes}.
Therefore $$\int_{\gamma_0:=\gamma'_0\cup\tilde{W}}\Omega_0=\int_{\gamma'_0}\Omega_0.$$

The tropical chain $\gamma'_0\in C_2(J(C);T_\Z J(C))$ is supported on the following $5$ parallelograms in $J(C)$:

\begin{itemize}
	\item $E_1$ with edges $AJ_p(A)$, $AJ_p(B)$, $(-1)_*AJ_{p'}(A)$, and $(-1)_*AJ_{p'}(B)$;
	\item $E_2$ with edges $(-1)_*AJ_{p'}(B)$, $(-1)_*AJ_{p'}(F)$, $(-1)AJ_{p'}(B)$ translated by $(-1)_*(AJ_{p'}(p_0)-AJ_{p'}(p_2))$, and $(-1)_*AJ_{p'}(F)$ translated by $(-1)_*(AJ_{p'}(p_2)-AJ_{p'}(p_3))$;
	\item $E_3$ with edges $AJ_p(B)$, $(-1)_*AJ_{p'}(E)$, $AJ_p(B)$ translated by $(-1)_*(AJ_{p'}(p_0)-AJ_{p'}(p_1))$, and $(-1)_*AJ_{p'}(E)$ translated by $AJ_p(p_2)-AJ_p(p_3)$. 
	\item $E_4$ with edges $AJ_p(E)$, $AJ_p(F)$, $AJ_p(E)$ translated by $AJ_p(p_2)-AJ_p(p_0)$, and $AJ_p(F)$ translated by $AJ_p(p_1)-AJ_p(p_0)$. 
	\item $E_5$ with edges $AJ_p(E)$, $(-1)_*AJ_{p'}(E)$, $AJ_p(D)$, and $(-1)_*AJ_{p'}(D)$.
\end{itemize}
These are parallelograms in $J(C)$ because we have chosen $p$ and $p'$ so that the image of $C$ by $AJ_p$ and $(-1)_*AJ_{p'}$ coincide, therefore cancel out in $C-C^-$.  Furthermore, there is a choice of orientation for these parallelepipeds which we do not describe here, but is found in Zharkov's original construction \cite{zharkov_tropical_ceresa}.
The $T_\Z J(C)$-coefficients are given as follows, in the basis $\{e_1,e_2,e_3\}$: $(-1,1,0)$ for $E_2$, $E_3$ and $E_4$, $(-1,0,0)$ for $E_1$, and $(0,1,-1)$ for $E_5$. One readily verifies (given the orientations) that $\partial\gamma'_0=\overline{AJ_p(C)}-\overline{(-1)_*AJ_{p'}(C)}$. Notice furthermore that the $T_\Z J(C)$-coefficients are tangent to the parallelograms for $E_1$, $E_2$, $E_4$ and $E_5$. Hence Proposition \ref{pre_determinantal_form_vanishes} implies that $$\int_{\gamma'_0}\Omega_0=\int_{E_3}\Omega_0=-ad,$$ where the minus sign follows from the orientation.

Recall from Example \ref{image_of_Zharkov_determinantal_form} that the determinantal form $\Omega^3_{2,1}$ is mapped through the appropriate choice of $\hat{\psi}$ to $$\frac{1}{2}\w^2\in \Omega^4(X(J(C))),$$ where $\w$ is the canonical symplectic form on $X(J(C))$ induced by the one on $T^*J(C)$. Therefore by Proposition \ref{pairing_preserved}, $$\int_{\gamma_0}\Omega^3_{2,1}=\frac{1}{2}\int_{\hat{\phi}(\gamma_0)}\w^2.$$

\begin{proof}[Proof of Theorem \ref{symplectic_Ceresa}]
	To prove Theorem \ref{symplectic_Ceresa}, we apply Lemma \ref{cob_implies_periods} with $\alpha:=\frac{1}{2}\w$, integrating $\frac{1}{2}\w^2$ over an $4$-chain between $L_C$ and $L_C^-$. Notice that $\hat{\phi}(\gamma_0)$ is a $4$-chain between $\hat{\phi}(\overline{C})$ and $\hat{\phi}(\overline{C^-})$, on which $\frac{1}{2}\w^2$ integrates to something which is not in its periods. 
	In Matessi's construction of the Lagrangian pair-of-pants, he shows (Corollary 3.19 in \cite{matessi_lag_pair_of_pants}) that they come in families which converge to his piecewise-linear Lagrangian lift. 
	By inspection of \cite[Section 2]{matessi_lag_pair_of_pants} and by Section \ref{image_of_Zharkov_determinantal_form}, the piecewise-linear Lagrangian lift of $C$ coincides with $\hat{\phi}(C)$.
	This family gives an $4$ chain $\gamma$ connecting $L_C$ and $\hat{\phi}(C)$, on which $\w^2$ necessarily vanishes (as on any $4$-chain given by a $1$-dimensional family of Lagrangians). The same argument yields an $4$-chain $\gamma^-$ connecting $L_C^-$ and $\hat{\phi}(C^-)$ on which $\w^2$ vanishes. Therefore $$\int_{\gamma\cup\hat{\phi}(\gamma_0)\cup\gamma^-}\frac{1}{2}\w^2=\int_{\hat{\phi}(\gamma_0)}\frac{1}{2}\w^2=\int_{\gamma_0}\Omega_{2,1}^3=-ad.$$ 
	 Because the maps $\phi$ and $\psi$ are isomorphisms, the periods of $\frac{1}{2}\w^2$ are exactly the periods of $\Omega^3_{2,1}$. 
	 Therefore for a generic choice of edge lengths, $-ad$ is not torsion modulo the periods of $\frac{1}{2}\w^2$, and we conclude.
\end{proof}

\begin{remark}\label{i_swear_this_theorem_is_not_trivial}
	While $\phi$ being an isomorphism already implied that $\hat{\phi}(C)$ and $\hat{\phi}(C^-)$ are homologous, the $(n+1)$-chains provided by Matessi's converging family of pairs-of-pants allows us to conclude that $L_C$ and $L_C^-$ are homologous in $X(J(C))$.
\end{remark}

\begin{remark}\label{remark_torsion_case}
	While for \textit{generic} choices of edge lengths, the flux $-ad$  is not torsion in $\mathcal{P}^3_{2,1}$, it is in special cases e.g. taking $a=d=e=f$. Although no analogous results have been brought to light tropically, classically \cite{Beauville_torsion_Ceresa_class,Beauville_torsion_Ceresa_cycle} exhibit an example of a non-hyperelliptic curve which nevertheless has torsion Ceresa cycle. 
\end{remark}

\begin{remark}\label{infinite_generators}
	It is worth noting that a few years after Ceresa proved the inifinite order statement for the Ceresa cycle in $\Gr_1(J(C))$, Bardelli proved that this group is in fact infinitely generated. He proceeds by constructing generators as follows: for every $k\geq1$, the cycles $C_k-C_k^-\in\Gr_1(J(C))$ are pushforwards of Ceresa cycles of curves $C_k$ for which $J(C_k)$ has a $k$-to-$1$ map to $J(C)$. It is expected that one could prove a similar statement symplectically, only with our methods this would require a tropical construction of these infinitely many generators, which to the best of our knowledge has not appeared in the literature. There has been, however, progress on the tropical Schottky problem; for instance an algorithm to produce a tropical curve $C_k$ from the data of its Jacobian (and a theta divisor) is suggested in \cite[Section 7]{bolognese2017curvestropicaljacobians}, and in \cite{Chua2017SchottkyAC}. 
\end{remark}

\subsection{Speculation for higher genus curves}

While we have thus far restricted to studying the genus three $K4$ graph, we expect the methods used to be able to produce statements about Lagrangian Ceresa cycles of any higher genus tropical curve containing $K4$ as a subgraph. 

\begin{conjecture}
	Let $C$ be a generic finite compact trivalent tropical curve of genus $\geq3$ containing $K4$ as a subgraph. Then $L_C-L_C^-\in \algcob^{or}(X(J(C)))$, its Lagrangian Ceresa cycle, has infinite order. 
\end{conjecture}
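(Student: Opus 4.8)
The plan is to run the flux argument of Theorem \ref{symplectic_Ceresa}, upgraded from the $K4$ curve to an arbitrary trivalent tropical curve $C$ of genus $n=g(C)\geq 3$ containing $K4$ as a subgraph. Since $C$ is trivalent it admits a Lagrangian lift $L_C\subset X(J(C))$ by Mikhalkin--Matessi, and Matessi's converging families of pairs-of-pants again sweep out $(n+1)$-chains on which $\w^2\wedge\beta$ vanishes (as on any chain traced by a one-parameter family of Lagrangians, where $\w$ has rank at most $2$), connecting $L_C$ to the piecewise-linear lift $\hat\phi(\overline C)$ and $L_C^-$ to $\hat\phi(\overline{C^-})$. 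Here $\beta$ is the $(n-3)$-form arising as the non-$\w$ factor of $\hat\psi(\Omega^n_{2,1})$: the higher-genus analogue of Example \ref{image_of_Zharkov_determinantal_form} reads $\hat\psi(\Omega^n_{2,1})=\tfrac12\,\w^2\wedge\beta$ with $\beta$ a signed sum of coordinate $(n-3)$-forms. Proposition \ref{pairing_preserved} then gives $\tfrac12\int_{\hat\phi(\gamma_0)}\w^2\wedge\beta=\int_{\gamma_0}\Omega^n_{2,1}$ for any tropical $2$-chain $\gamma_0\in C_2(J(C);T_\Z J(C))$ with $\partial\gamma_0=\overline C-\overline{C^-}$, so by Theorem \ref{algcob_flux_theorem} it suffices to prove that this tropical flux is non-torsion modulo the period lattice $\mathcal{P}^n_{2,1}$ of Proposition \ref{periods_computation} for a generic choice of edge lengths.

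First I would construct the bounding $2$-chain $\gamma_0$. I would extend the basis of $H_1(K4;\Z)$ from Figure \ref{K4_basis} to a basis of $H_1(C;\Z)$, so that the three $K4$-cycles are the first three coordinate directions, and choose two basepoints $p,p'$ as in Zharkov's argument so that a shared edge cancels in $\overline C-\overline{C^-}$. The chain $\gamma_0$ will then consist of Zharkov's five parallelograms $E_1,\dots,E_5$ built from the $K4$-part, together with additional parallelograms realising the homological equivalence of the extra handles. The crucial point is that Proposition \ref{pre_determinantal_form_vanishes} still forces $\Omega^n_{2,1}$ to vanish on every parallelogram whose $T_\Z J(C)$-framing is tangent to it; the goal is to arrange all the new parallelograms to be of this degenerate type, and since $E_3$ is tangent to the $K4$-coordinate $3$-plane with framing in those coordinates, only the terms of $\Omega^n_{2,1}$ indexed by $\{1,2,3\}$ survive on it, giving $\int_{E_3}\Omega^n_{2,1}=-ad$ with $a,d$ the lengths of the $K4$-edges $A,D$.

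The heart of the argument is a genericity statement. The element $-ad$ has infinite order in $\R/\mathcal{P}^n_{2,1}$ for a given length vector precisely when $-ad$ is not a $\Q$-linear combination of the period generators, which by Proposition \ref{periods_computation} are integer combinations of sums of $2\times 2$ minors of the polarisation matrix $Q$ of $C$. Viewing $-ad$ and the period generators as polynomials in the edge lengths, the set of length vectors where a relation holds is a countable union of proper hypersurfaces \emph{provided} $-ad$ is $\Q$-linearly independent from the period-generator polynomials; non-torsion then holds off a measure-zero set. To establish this polynomial independence I would specialise the lengths of the extra handles so that the flux and the period generators degenerate to the $K4$-quantities: in that limit $-ad$ and the relevant minors reproduce exactly the genus-$3$ situation, where Zharkov's Theorem \ref{Zharkov_theorem} guarantees $-ad\notin\Q\cdot\mathcal{P}^3_{2,1}$, so no polynomial relation can survive.

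The main obstacle I anticipate is controlling the interaction between the $K4$-subgraph and the rest of $C$, both in the construction of $\gamma_0$ and in the period comparison. When the $K4$-subgraph is edge-disjoint from the extra handles, $Q$ is block-structured, the new parallelograms are easily made tangent to their framings, and the specialisation above is clean; in general the shared edges make $Q$ mix the $K4$-block with the rest, so one must show this mixing neither obstructs the vanishing of the auxiliary parallelograms nor creates a spurious $\Q$-relation at the level of polynomials. A possibly more robust route is functoriality: one would prove that tropical flux is natural under the projection $J(C)\to J(K4)$ induced (up to isogeny) by a splitting $H_1(C;\Z)\cong H_1(K4;\Z)\oplus H'$, pull back $\Omega^3_{2,1}$, and compute the flux of $C-C^-$ against $\pi^*\Omega^3_{2,1}$; there the same difficulty reappears as the incompatibility between the polarisation of $C$ and of its $K4$-subgraph, which must be shown not to destroy the non-torsion conclusion.
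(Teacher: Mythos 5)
First, a framing point: the statement you were asked to prove is stated in the paper as a \emph{Conjecture}, and the paper does not prove it. What the paper offers is a proposed reduction to the genus~$3$ case (Theorem \ref{symplectic_Ceresa}), mirroring Zharkov's edge-removal argument: the tropical projections $J(C)\rightarrow J(C_e)$ are lifted to Lagrangian correspondences $P\subset\overline{X(J(C))}\times X(J(C_e))$, and the conclusion would follow from the functoriality Conjecture \ref{functoriality_algcob_conjecture}. The only proven piece of that strategy, Lemma \ref{functoriality_algcob_partialstatement}, requires $\pi_M(Y)=M$, and the paper itself points out that this hypothesis fails for $P$, which fibres over the hyperplane $H=\{p_g=0\}$. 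Your proposal takes a genuinely different route --- a direct flux computation in $X(J(C))$ --- and its symplectic half is correct and is exactly the mechanism of the genus-$3$ proof: $\hat\psi(\Omega^n_{2,1})=\tfrac12\w^2\wedge\beta$ with $\beta$ a closed signed sum of fibre coordinate $(n-3)$-forms, the vanishing of $\w^2\wedge\beta$ on the chains swept by Matessi's one-parameter families (a $2$-form of rank $\leq 2$ squares to zero), Proposition \ref{pairing_preserved}, and Theorem \ref{algcob_flux_theorem}. This avoids the functoriality conjecture altogether, which is a real advantage over the paper's suggested strategy.

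The genuine gaps are on the tropical side, and you have correctly located but not closed them. (a) The construction of $\gamma_0$ and the evaluation $\int_{E_3}\Omega^n_{2,1}=-ad$ are asserted, not carried out. The problem is sharper than you state: when the extra handles attach to $K4$ at distinct vertices, any completion of the $K4$-cycles to a basis of $H_1(C;\Z)$ forces some complementary basis cycles to traverse $K4$-edges, so the Abel--Jacobi directions of the edges $A,\dots,F$ in $J(C)$ acquire components \emph{outside} the span of $e_1,e_2,e_3$. Then $E_3$ is not tangent to the $K4$-coordinate $3$-plane, the terms of $\Omega^n_{2,1}$ with indices outside $\{1,2,3\}$ need not die on it, and there is no reason the flux is literally $-ad$; likewise the claim that all auxiliary parallelograms can be arranged with tangent framings (so that Proposition \ref{pre_determinantal_form_vanishes} kills them) is exactly the combinatorial content that a proof would have to supply. (b) Your specialisation argument sends the extra edge lengths to the boundary of the moduli of metrics; for it to yield the needed polynomial independence you must check that every generator of $\mathcal{P}^n_{2,1}$ from Proposition \ref{periods_computation} specialises into $\Q\cdot\mathcal{P}^3_{2,1}$ in that limit. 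This is immediate when $Q$ is block-diagonal (the edge-disjoint case) but unverified in general, and it is contingent on (a) giving $-ad$ in the first place. Your fallback via a projection $J(C)\rightarrow J(K4)$ reintroduces, on the symplectic side, precisely the obstruction the paper flags for $P$.

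One constructive remark: the tropical input your argument needs --- that the $\Omega^g_{2,1}$-flux of $C-C^-$ is non-torsion modulo periods for generic $C$ containing $K4$ --- is essentially the content of Ritter's theorem quoted in Section \ref{hyperelliptic_types_section} (\cite[Theorem A]{ceresa_period_tropical}: the flux obstruction vanishes if and only if $C$ is of hyperelliptic type), modulo checking that graphs containing $K4$ as a subgraph are not of hyperelliptic type and upgrading non-vanishing to non-torsion for generic edge lengths. Citing that result, together with your (sound) symplectic reduction, is probably the shortest viable path to the Conjecture; as written, however, your proposal is a programme with two substantive unproven steps rather than a proof --- which is consistent with the statement's status in the paper.
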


This Conjecture is motivated by the following tropical statement by Zharkov: 
\begin{theorem}\cite[Theorem 8]{zharkov_tropical_ceresa}
	Let $C$ be a generic tropical curve of genus $\geq 3$ containing $K4$ as a subgraph. Then $C$ is not algebraically equivalent to $C^-$ in $J(C)$. 
\end{theorem}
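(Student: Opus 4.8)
The plan is to run the same tropical flux obstruction used in the genus three case, now on the Jacobian $J(C)$ of dimension $g$, where $g$ is the genus of $C$, and to localise the non-trivial part of the flux to the $K4$ subgraph. By Corollary \ref{determinantal_form_vanishes} together with Proposition \ref{periods_computation}, if $C$ and $C^-$ were algebraically equivalent in $J(C)$ then for every tropical $2$-chain $\gamma\in C_2(J(C);T_\Z J(C))$ with $\partial\gamma=\overline{C}-\overline{C^-}$ one would have $\int_\gamma\Omega^g_{2,1}\in\mathcal{P}^g_{2,1}$. Since $(-1)$ acts as $-\Id$ on $H_1(J(C);\Z)$, it acts trivially on $H_2(J(C);\Z)\cong\Lambda^2 H_1(J(C);\Z)$, so the Ceresa cycle is nullhomologous and such a $\gamma$ exists (cf.\ Remark \ref{Ceresa_nullhomologous}). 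It therefore suffices to produce one such $\gamma$ whose flux $\int_\gamma\Omega^g_{2,1}$ is, for generic edge lengths, not contained in $\mathcal{P}^g_{2,1}$.

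First I would choose basepoints $p,p'$ lying on the $K4$ subgraph and reuse Zharkov's construction verbatim on those six edges: write $\gamma=\tilde W\cup\gamma_0'$, where $\tilde W$ realises the basepoint change $AJ_p(C)\sim AJ_{p'}(C)$ via the tautological equivalence $W=\{(x,AJ_x(C))\}\subset C\times J(C)$ and hence satisfies $\int_{\tilde W}\Omega^g_{2,1}=0$ by Corollary \ref{determinantal_form_vanishes}, while $\gamma_0'$ is supported on the parallelograms $E_1,\dots,E_5$ built from the $K4$ edges exactly as in Section \ref{Lag_Ceresa_section}. The edges of $C$ outside $K4$ must also be bounded: I would pair each such edge with its image under $(-1)$ by a parallelogram whose $T_\Z J(C)$-framing is tangent to the parallelogram. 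By Proposition \ref{pre_determinantal_form_vanishes} the determinantal form vanishes identically on any such face (and on $E_1,E_2,E_4,E_5$, whose framings are likewise tangent), so only $E_3$ survives and $\int_\gamma\Omega^g_{2,1}=\int_{E_3}\Omega^g_{2,1}=-ad$, where $a,d$ are the lengths of the $K4$ edges $A,D$. The point is that the flux localises to the $K4$ subgraph and is insensitive to the rest of $C$.

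Finally I would settle genericity. By Proposition \ref{periods_computation}, $\mathcal{P}^g_{2,1}$ is the integer span of the $2\times 2$ minors of the polarisation matrix $Q$ of $C$, which are quadratic polynomials in all the edge lengths, whereas the flux $-ad$ is a single quadratic monomial in two $K4$ lengths. To see that $-ad\notin\mathcal{P}^g_{2,1}$ generically, specialise the lengths of the non-$K4$ edges so that the relevant minors of $Q$ degenerate to the genus three minors, reducing the $\Q$-linear (in)dependence question to Zharkov's genus three computation (Theorem \ref{Zharkov_theorem}); since non-membership is a Zariski-open condition on the remaining edge lengths, it persists on a dense open subset of all lengths. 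This yields a non-zero flux modulo periods and hence $C\not\sim C^-$.

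I expect the main obstacle to be the middle step: rigorously constructing the bounding $2$-chains for the edges of $C$ outside the $K4$ subgraph and verifying that their determinantal framings are genuinely tangent (hence flux-free by Proposition \ref{pre_determinantal_form_vanishes}), so that no uncontrolled contribution to $\int_\gamma\Omega^g_{2,1}$ survives. A conceptually cleaner alternative, which I would pursue if the direct construction proves unwieldy, is to realise a contraction $C\to K4$ as a tropical morphism inducing a compatible map of Jacobians $J(C)\to J(K4)$ under which both the Ceresa cycle and the determinantal form are natural, reducing the statement directly to Theorem \ref{Zharkov_theorem}; the difficulty there is that the orthogonal complement of $H_1(K4;\Z)$ inside $H_1(C;\Z)$ for the polarisation form need not be defined over the lattice, so such a morphism may only exist after passing to an isogeny.
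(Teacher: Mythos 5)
Your strategy is genuinely different from the paper's. The paper (following Zharkov) does \emph{not} redo the flux computation in genus $g$: it deletes edges of $C$ one at a time, using Zharkov's Propositions 6 and 7 to get a tropical projection $J(C)\rightarrow J(C_e)$ along the direction of the deleted edge $e$, under which $C\mapsto C_e$ and any algebraic equivalence $C\sim C^-$ survives; iterating reduces the statement to the genus-three $K4$ case, i.e.\ to Theorem \ref{Zharkov_theorem}. Note that your closing ``conceptually cleaner alternative'' is essentially this argument, except that Zharkov's mechanism is edge \emph{deletion} with projection along the edge direction, not a contraction $C\to K4$ inducing $J(C)\to J(K4)$ in one step; working one edge at a time via deletion is exactly what sidesteps the lattice/isogeny obstruction you flag, so the difficulty you anticipate there is an artefact of your formulation rather than of the method.

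Your direct route, by contrast, has two genuine gaps. First, the middle step you yourself identify as the obstacle is the actual mathematical content and is not filled: Zharkov's chain $\gamma_0'$ depends delicately on the $K4$ combinatorics (the basepoints $p,p'$ are chosen so that the images of one edge under $AJ_p$ and $(-1)_*AJ_{p'}$ coincide and cancel, and the five parallelograms' side edges cancel against each other). For a general graph containing $K4$ there is no analogous symmetric basepoint choice, and ``pair each extra edge with its $(-1)$-image by a tangent-framed parallelogram'' leaves uncancelled side-boundary segments whose global cancellation is precisely what must be arranged; nothing in the proposal shows this can be done, nor that the resulting total flux is still $-ad$. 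Second, the genericity step is flawed as stated: non-membership of $-ad$ in the finitely generated group $\mathcal{P}^g_{2,1}$ is a countable intersection of Zariski-open conditions (one per integer combination of generators), not itself Zariski-open, and more seriously, your specialisation does not reduce the minors of $Q$ to the genus-three ones. Any basis of $H_1(C;\Z)$ extending a basis of $H_1(K4;\Z)$ has extra cycles that traverse $K4$ edges (they are fundamental cycles of chords with respect to a spanning tree meeting $K4$), so after sending the non-$K4$ lengths to zero the Gram matrix $Q$ is \emph{not} block-diagonal: its extra rows and columns still carry linear forms in $a,\dots,f$, and the specialised $2\times 2$ minors are not integer combinations of the genus-three periods. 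So the reduction of the $\Q$-linear independence question to Zharkov's genus-three verification does not go through as written. Both gaps are avoided wholesale by the projection argument the paper actually uses.
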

The argument is sketched as follows. Let $C$ be a genus $g\geq3$ curve containing $K4$ as a subgraph. We proceed by successively removing edges $e$ of $C$ to obtain a new curve $C_e$, until we are left with the $K4$ graph. Notice that this can be done in a way that at each iteration, $C_e$ is connected (up to removing edges which have constant image in the Jacobian). Furthermore, we only consider \textit{ compact} curves whose underlying graph is finite, therefore there are no $1$-valent vertices (this property is inherited by $C_e$ as two metric graphs are said to be \textit{equivalent} if one is obtained from another by inserting a $2$-valent vertex in the interior of an edge). 
With these precautions, we observe that after each iteration $g(C_e)=g(C)-1$. Zharkov proves \cite[Propositions 6. and 7.]{zharkov_tropical_ceresa} that this comes with a tropical projection map $J(C)\rightarrow J(C_e)$ along the direction of the edge $e$ in $J(C)$, which maps $C\subset J(C)$ to $C_e\subset J(C_e)$. 
Because an algebraic equivalence between $C$ and $C^-$ would survive this tropical projection map, iterating this process would eventually lead to a contradiction.

A natural approach to proving a symplectic version of this would be the following. The graph of the tropical projection map above is a cycle of dimension $g$ in $J(C)\times J(C_e)$, which induces a map  $$CH_i(J(C))\longrightarrow CH_i(J(C_e))$$ between tropical Chow groups. If we choose a set of coordinates $(x_1,\dots,x_g)$ for $J(C)$ such that the tangent vector to $e$ in $J(C)$ is parallel to $(0,\dots,0,1)$, then this $g$-cycle is $$\{(x_1,\dots,x_g,x_1,\dots,x_{g-1})\}\subset J(C)\times J(C_e).$$ 
Using the associated cotangent coordinates, this motivates us to consider its Lagrangian lift as a Lagrangian correspondence $$P:=\{(x_1,p_1,\dots,x_g,0,x_1,p_1,\dots,x_{g-1},p_{g-1})\}\subset \overline{X(J(C))}\times X(J(C_e)).$$ 

Our result would follow from the following Conjecture:

\begin{conjecture}\label{functoriality_algcob_conjecture}
	Let $Y$ be an oriented Lagrangian correspondence $Y\subset\overline{M}\times M'$ between symplectic manifolds $M$ and $M'$. Then $Y$ induces a well-defined map
	\begin{align}
		Y:&\hspace{1mm}\algcob(M)\longrightarrow\algcob(M')\notag \\
		& \hspace{9mm}L\hspace{10mm}\mapsto \hspace{7mm}Y(L). \notag
	\end{align} 
\end{conjecture}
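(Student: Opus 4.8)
The plan is to define the map by geometric composition of Lagrangian correspondences and then to verify that this operation descends to the quotients defining $\algcob$. On generators I would set
$$Y(L):=\pi_{M'}\bigl(Y\cap(L\times M')\bigr),$$
where, after a generic Hamiltonian perturbation of $Y$ (or of $L$), the intersection is transverse so that $Y(L)$ is an immersed oriented Lagrangian in $M'$, the orientation being induced from the product orientation on $L\times M'$ together with the standard fibre-product sign convention; additivity $Y(L_1+\cdots+L_k)=Y(L_1)+\cdots+Y(L_k)$ is then immediate. The first point to settle is that $Y(L)$ is a well-defined element of $\algcob^{or}(M')$, independent of the perturbation: any two transverse choices of $Y$ are joined by a path of correspondences inducing an oriented Lagrangian isotopy between the two geometric compositions, and by Hicks--Mak \cite{hicks_mak_cute} oriented Lagrangian isotopic Lagrangians are Lagrangian cobordant, hence agree in $\algcob^{or}(M')$ by Remark \ref{cob_implies_algcob}.

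The heart of the argument is to transport a witnessing algebraic Lagrangian cobordism. Given $\Gamma:(L_1,\dots,L_k)\stackrel{X(B)}{\rightsquigarrow}(L'_1,\dots,L'_{k'})$ in $\overline{X(B)}\times M$, I would form the composed correspondence
$$Y\circ\Gamma:=\bigl\{(x,m')\in\overline{X(B)}\times M':\ \exists\,m\in M,\ (x,m)\in\Gamma,\ (m,m')\in Y\bigr\},$$
which, after a Hamiltonian perturbation making the fibre product $\Gamma\times_M Y$ transverse, is an immersed oriented Lagrangian in $\overline{X(B)}\times M'$ (the Wehrheim--Woodward composition, as in \cite{ww_quilted_floer_cohomology}). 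The two defining conditions of Definition \ref{alg_lag_cob} would follow from the associativity of geometric composition under (strong) transversality: on fibres one gets $(Y\circ\Gamma)(F_p)=Y(\Gamma(F_p))=Y(L_1)+\cdots+Y(L_k)$ and likewise over $q$, which are exactly the prescribed ends. Since a single generic perturbation can be chosen to make the relevant triple compositions simultaneously transverse, these equalities hold on the nose up to the harmless reparametrisations absorbed by the previous paragraph.

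The main obstacle is the surjectivity requirement $\pi_B(Y\circ\Gamma)=B$. Tracing the construction gives $\pi_B(Y\circ\Gamma)=\pi_B\bigl(\Gamma\cap\pi_M^{-1}(\pi_M(Y))\bigr)$, so surjectivity amounts to the fibres $\Gamma(F_b)$ meeting the domain $\pi_M(Y)$ for every $b\in B$, and this can genuinely fail when $\pi_M(Y)$ has large codimension. I would handle it by a dimension count: for generic $b$ the fibre $\Gamma(F_b)$ is a middle-dimensional immersed Lagrangian, $\dim\Gamma(F_b)=m$ in $M^{2m}$, so whenever $\dim\pi_M(Y)\ge m$ the intersection $\Gamma(F_b)\cap\pi_M(Y)$ is generically non-empty and $b\in\pi_B(Y\circ\Gamma)$; the (a priori proper) failure locus can then be removed by a further small Hamiltonian perturbation of $\Gamma$. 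This hypothesis holds for graphs of symplectomorphisms and, crucially, for the conormal projection correspondence $P$ of Section \ref{Lag_Ceresa_section}, whose domain $\pi_M(P)$ has codimension one. For fully general $Y$ the cleanest route is either to impose that $\pi_M(Y)$ be at least middle-dimensional, or to relax Definition \ref{alg_lag_cob} so that $\pi_B(\Gamma)$ need only contain a path from $p$ to $q$ over which $\Gamma$ is non-empty, which is all that the arguments of Proposition \ref{algcob_cyc_class} and Theorem \ref{algcob_flux_theorem} actually use. Passing from generators to the relations as above, with orientations composing throughout, then yields the well-defined induced map $Y:\algcob^{or}(M)\to\algcob^{or}(M')$.
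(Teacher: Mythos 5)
You are attempting to prove a statement that the paper itself leaves as a conjecture: the paper only establishes the special case, Lemma \ref{functoriality_algcob_partialstatement}, under the additional hypothesis $\pi_M(Y)=M$, and its proof of that lemma is essentially your second paragraph (composition of $\Gamma$ with $\varphi_{1324}(Y\times\Delta_A)$, i.e.\ Wehrheim--Woodward composition, with the end conditions $\Gamma'(F_p)=Y(L)$ checked directly). Up to there your construction matches the paper. The genuine gap is exactly where you try to go beyond the lemma: your dimension count for the surjectivity condition $\pi_B(Y\circ\Gamma)=B$ is not valid. Nonnegative expected dimension of $\Gamma(F_b)\cap\pi_M(Y)$ does not imply non-emptiness: transversality is a local statement, and two middle-dimensional submanifolds of a $2m$-manifold can be disjoint in a way that is stable under all small perturbations (two parallel circles in $T^2$ already do this, and Lagrangian fibres are typically displaceable from a fixed middle-dimensional subset). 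Consequently the ``failure locus'' you propose to remove by a further small Hamiltonian perturbation of $\Gamma$ need not be removable at all, since a fibre image at a definite distance from $\pi_M(Y)$ stays disjoint from it under small perturbations. The hypothesis $\pi_M(Y)=M$ in the paper's lemma is precisely what guarantees that \emph{every} point of every fibre image admits a partner in $Y$, with no intersection-theoretic input; your weakening $\dim\pi_M(Y)\geq m$ does not deliver this. Note also that the paper explicitly flags the correspondence $P$ of Section \ref{Lag_Ceresa_section} as a case where the needed surjectivity is problematic, so citing $P$ as a case where your hypothesis ``crucially'' suffices contradicts the paper's own discussion.

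Your two fallback moves do not close the gap either. Relaxing Definition \ref{alg_lag_cob} (requiring only that $\pi_B(\Gamma)$ contain a path from $p$ to $q$) changes the statement being proved: the equivalence of such a relaxed definition with the original one is exactly what the paper identifies as expected but unproven, by analogy with the algebro-geometric equivalences of \cite{parameter_spaces_alg_equiv} and with Lemma \ref{algcob_yields_Lagcorresp_over_curve}, so it cannot be invoked as if established. Separately, your well-definedness argument in the first paragraph is also shakier than stated: a generic path of Hamiltonian perturbations of $Y$ can pass through non-transverse moments at which the geometric composition $Y(L)$ changes topology (e.g.\ by surgery), so the two compositions need not be Lagrangian isotopic and the appeal to Hicks--Mak \cite{hicks_mak_cute} does not directly apply; one would need a separate argument (for instance via cobordisms associated to the degeneration) to see that the class in $\algcob^{or}(M')$ is independent of the perturbation. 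As it stands, your proposal proves no more than the paper's Lemma \ref{functoriality_algcob_partialstatement}, and the conjecture remains open.
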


One reason to expect and hope for this statement to be true, is that this would be the symplectic manifestation of the fact that algebraic equivalence is an \textit{adequate equivalence relation}, in particular it behaves well under correspondences. A more explicit piece of evidence is hinted in the following Lemma:

\begin{lemma}\label{functoriality_algcob_partialstatement}
	Let $Y$ be an oriented Lagrangian correspondence $Y\subset\overline{M}\times M'$ between symplectic manifold $M$ and $M'$ which satisfies $\pi_M(Y)=M$. Then $Y$ induces a well-defined map
	\begin{align}
		Y:&\hspace{1mm}\algcob(M)\longrightarrow\algcob(M')\notag \\
		& \hspace{9mm}L\hspace{10mm}\mapsto \hspace{7mm}Y(L). \notag
	\end{align} 
\end{lemma}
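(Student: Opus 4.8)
The plan is to define the induced map by \emph{geometric composition} with $Y$, extended linearly over generators, and then to verify that it descends to the quotient by algebraic Lagrangian cobordism. On generators, for a suitable oriented Lagrangian $L\in\mathcal{L}(M)$ I set $Y(L):=\pi_{M'}(Y\cap(L\times M'))\subset M'$; after a Hamiltonian perturbation of $Y$ (or of $L$) the intersection is transverse, so $Y(L)$ is an immersed oriented Lagrangian whose orientation is induced from those of $Y$ and $L$, and I extend $Y$ linearly to $\bigoplus_{L}\Z\cdot L$. What remains is to check that this passes to the quotient: if $(L_1,\dots,L_k)$ and $(L_1',\dots,L_{k'}')$ are algebraically Lagrangian cobordant, then so are their images.

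To that end, suppose $\Gamma\subset\overline{X(B)}\times M$ is an algebraic Lagrangian cobordism as in Definition \ref{alg_lag_cob}, with $\Gamma(F_p)=L_1+\dots+L_k$, $\Gamma(F_q)=L_1'+\dots+L_{k'}'$, and $\pi_B(\Gamma)=B$. I would form the geometric composition
$$\Gamma':=Y\circ\Gamma=\pi_{\overline{X(B)}\times M'}\big(\{(a,m,m'):(a,m)\in\Gamma,\ (m,m')\in Y\}\big)\subset\overline{X(B)}\times M'.$$
After a Hamiltonian perturbation making the relevant fibre product clean and its projection an immersion (as in \cite{ww_quilted_floer_cohomology}[Lemma 2.0.5]), $\Gamma'$ is an immersed oriented Lagrangian correspondence, built over the \emph{same} polarised torus $X(B)$ and the \emph{same} marked points $p,q$.

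It then remains to verify the two defining conditions. Functoriality of geometric composition yields $\Gamma'(F_p)=Y(\Gamma(F_p))=Y(L_1)+\dots+Y(L_k)$ and, symmetrically, $\Gamma'(F_q)=Y(L_1')+\dots+Y(L_{k'}')$, compatibly with orientations. For the surjectivity condition, observe that
$$\pi_B(\Gamma')=\{\,b\in B:\exists\, f\in F_b,\ m\in M,\ m'\in M',\ (b,f,m)\in\Gamma,\ (m,m')\in Y\,\}.$$
Since $\pi_B(\Gamma)=B$, for every $b\in B$ there is a point $(f,m)\in\Gamma$ over $b$; and since the hypothesis $\pi_M(Y)=M$ guarantees $m\in\pi_M(Y)$, there exists $m'$ with $(m,m')\in Y$. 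Hence $\pi_B(\Gamma')=B$, so $\Gamma'$ is an algebraic Lagrangian cobordism between the images, and $Y$ descends to a well-defined homomorphism $\algcob(M)\longrightarrow\algcob(M')$.

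The main obstacle is the technical composability of $Y$ with $\Gamma$: arranging that the fibre product is clean (or transverse) and projects to an immersed oriented Lagrangian, and that the required Hamiltonian perturbation can be carried out without disturbing either the fibre equalities over $p$ and $q$ or base-surjectivity. The hypothesis $\pi_M(Y)=M$ is exactly what preserves $\pi_B(\Gamma')=B$; its absence is why the general Conjecture \ref{functoriality_algcob_conjecture} is harder, since without surjectivity one would instead have to pass (via Lemma \ref{algcob_yields_Lagcorresp_over_curve}) to a correspondence supported over a tropical curve and re-establish surjectivity there.
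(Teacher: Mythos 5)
Your proposal is correct and follows essentially the same route as the paper: your geometric composition $\Gamma'=Y\circ\Gamma$ is exactly the paper's image of $\Gamma$ under the correspondence $\varphi_{1324}(Y\times\Delta_A)$ built from $Y$ and the diagonal, with the same verification of the fibre conditions and the same use of $\pi_M(Y)=M$ together with $\pi_B(\Gamma)=B$ to establish base-surjectivity. Your explicit flagging of the transversality/cleanness caveat matches the paper's standing assumption that all intersections are made transverse by small Hamiltonian perturbations.
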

\begin{proof}
	Let $L_1$ and $L_2$ be oriented Lagrangians in $M$. Assume there is an oriented Lagrangian correspondence $$\Gamma:L_1\stackrel{A_{p,q}}{\rightsquigarrow}L_2,$$ and set $L'_i:=Y(L_i)$ for $i=1,2$.
	Denote by $\Delta_A\subset \overline{A}\times A$ be the diagonal, and by 
	\begin{align}
		\varphi_{1324}:&\hspace{1mm}M\times M'\times A\times A\longrightarrow M\times A\times M'\times A \notag \\
		&\hspace{3mm}(x_1,x_2,x_3,x_4)\hspace{4.5mm}\mapsto\hspace{4mm}(x_1,x_3,x_2,x_4)\notag
	\end{align}
	the map interchanging the factors. Then $\varphi_{1324}(Y\times\Delta_A)$ is an oriented Lagrangian correspondence between $M\times A$ and $M'\times A$. Define $\Gamma'\subset M'\times A$ as the image of $\Gamma$ under this correspondence. Observe
	\begin{align}
		\Gamma'(F_q)&=\pi_{M'}(\Gamma'\pitchfork(M'\times F_q))=\pi_{M'}(\varphi_{1324}(Y\times\Delta_A)\pitchfork(M'\times F_q))\notag \\
		&=\pi_{M'}\left(\pi_{M'\times A}(\varphi_{1324}(Y\times\Delta_A)\pitchfork(\Gamma\times M'\times A)\pitchfork(M'\times F_q))\right)\notag \\ 
		&= \pi_{M'}\left(\varphi_{1324}(Y\times\Delta_A)\pitchfork(\Gamma\times M'\times F_q)\right) = L'_2. \notag
	\end{align}
	Similarly, we have $\Gamma'(F_p)=L_1'$. 
	Finally, we verify that $\pi_B(\Gamma')=B$, in other words that $\forall b\in B$, $\Gamma'(F_b)$ is non-empty. This can be rephrased as the requirement that for all $b\in B$, there should exist an $x\in M$, an $x'\in M'$, and an $f\in F_b$ such that $(x,x')\in Y$ and $(x,f)\in\Gamma$. We know that $\pi_B(\Gamma)=B$, therefore we are guaranteed the existence of some $x\in M$ and $f\in F_b$ such that $(x,f)\in \Gamma$. The existence of an associated $x'\in M'$ such that $(x,x')\in Y$ follows from our condition that $\pi_M(Y)=M$. 
\end{proof}

Here the condition that $\pi_M(Y)=M$ is used to ensure that $\pi_B(\Gamma')=B$. Very generally, we expect that this condition can be relaxed while still yielding an equivalent definition for algebraic Lagrangian cobordisms. Recall from Section \ref{def_alg_lag_cob} that the classical notion of algebraic equivalence can be equivalently formulated in terms of cycles living in flat families over a curve, an abelian variety, or any smooth projective variety \cite{parameter_spaces_alg_equiv}. It is reasonable to expect a similar statement tropically, and by extension for our definition of algebraic Lagrangian cobordisms to be equivalent to the existence of a Lagrangian correspondence $\Gamma'$ supported on a tropical curve for instance, as in Lemma \ref{algcob_yields_Lagcorresp_over_curve}.

Notice that in our specific setting of the correspondence $P\subset\overline{X(J(C))}\times X(J(C_e))$ above, the condition $\pi_B(P)$ fails to be satisfied in a very tangible way. Namely, letting $H:=\{p_g=0\}$ be the hyperplane $p_g=0$ in $X(J(C))$,  $$\pi_B(P)=\pi_B(\Gamma\cap(H\times A)),$$ which generically has dimension $\dim(B)+g-1\geq\dim(B)$.

\section{Some additional remarks}\label{further_remarks}

\subsection{Tropical curves of hyperelliptic type}\label{hyperelliptic_types_section}

The failure of the tropical Torelli theorem implies that some non-hyperelliptic curves can have a Jacobian isomorphic to that of a hyperelliptic curve; these are said to be of \textit{hyperelliptic type}. Readers are referred to \cite{Chan_tropical_hyperelliptic_curves} for background on tropical hyperelliptic curves, and \cite{Corey_tropical_curves_hyperelliptic_type} for background on tropical curves of hyperelliptic type. In the latter, it is shown that a curve being of hyperelliptic type depends only on its underlying graph. 

Throughout this paper, we have been concerned with a genus $3$ tropical curve of type $K4$ as represented in \ref{K4}. There are however, four additional types of generic genus $3$ tropical curves; see Figure \ref{hyperelliptic_types}. By generic we mean that all other genus three tropical curves can be obtained as degenerations of these (in particular they are trivalent). 

 \begin{figure}[htb]
	\centering
	\includegraphics[width=4.8in]{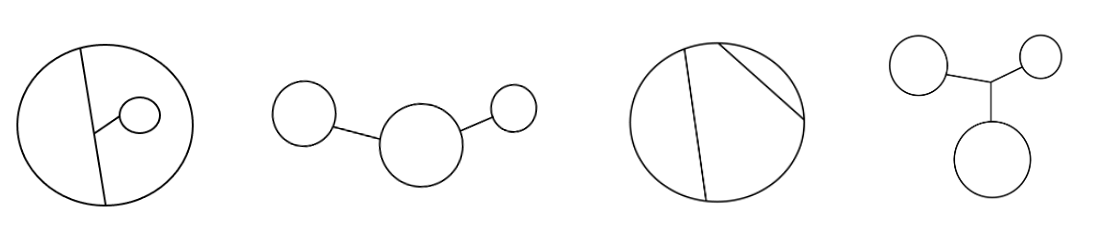} 
	\caption{The four generic types of genus three tropical curves of hyperelliptic type.}
	\label{hyperelliptic_types}
\end{figure}

The curves from Figure \ref{hyperelliptic_types} are of hyperelliptic type. Figure \ref{hyperelliptic_nonhyperelliptic} gives an example of two curves which have isomorphic Jacobians, yet the one on the left is hyperelliptic while the one on the right isn't. 

 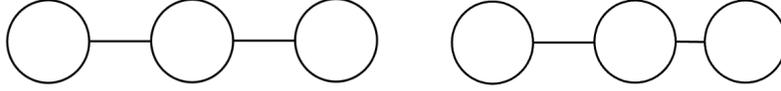
\begin{figure}[htb]
	\centering
	\begin{tikzpicture}[scale=0.6]
		\pgfdeclarelayer{nodelayer} 
		\pgfdeclarelayer{edgelayer}
		\pgfsetlayers{main,nodelayer,edgelayer}
		\begin{pgfonlayer}{nodelayer}
			\node (0) at (-12, 1) {};
			\node (1) at (-13, 0) {};
			\node (2) at (-12, -1) {};
			\node (3) at (-11, 0) {};
			\node (4) at (-8, 1) {};
			\node (5) at (-9, 0) {};
			\node (6) at (-8, -1) {};
			\node (7) at (-7, 0) {};
			\node (8) at (-4, 1) {};
			\node (9) at (-5, 0) {};
			\node (10) at (-4, -1) {};
			\node (11) at (-3, 0) {};
			\node (12) at (4, 1) {};
			\node (13) at (3, 0) {};
			\node (14) at (4, -1) {};
			\node (15) at (5, 0) {};
			\node (16) at (9.75, 1) {};
			\node (17) at (8.75, 0) {};
			\node (18) at (9.75, -1) {};
			\node (19) at (10.75, 0) {};
			\node (20) at (12, 1) {};
			\node (21) at (11, 0) {};
			\node (22) at (12, -1) {};
			\node (23) at (13, 0) {};
		\end{pgfonlayer}
		\begin{pgfonlayer}{edgelayer}
			\draw [thick, bend left=45] (0.center) to (3.center);
			\draw [thick, bend left=45] (3.center) to (2.center);
			\draw [thick, bend left=45] (1.center) to (0.center);
			\draw [thick, bend right=45] (1.center) to (2.center);
			\draw [thick, bend left=45] (4.center) to (7.center);
			\draw [thick, bend left=45] (7.center) to (6.center);
			\draw [thick, bend left=45] (5.center) to (4.center);
			\draw [thick, bend right=45] (5.center) to (6.center);
			\draw [thick, bend left=45] (8.center) to (11.center);
			\draw [thick, bend left=45] (11.center) to (10.center);
			\draw [thick, bend left=45] (9.center) to (8.center);
			\draw [thick, bend right=45] (9.center) to (10.center);
			\draw [thick, bend left=45] (12.center) to (15.center);
			\draw [thick, bend left=45] (15.center) to (14.center);
			\draw [thick, bend left=45] (13.center) to (12.center);
			\draw [thick, bend right=45] (13.center) to (14.center);
			\draw [thick, bend left=45] (16.center) to (19.center);
			\draw [thick, bend left=45] (19.center) to (18.center);
			\draw [thick, bend left=45] (17.center) to (16.center);
			\draw [thick, bend right=45] (17.center) to (18.center);
			\draw [thick, bend left=45] (20.center) to (23.center);
			\draw [thick, bend left=45] (23.center) to (22.center);
			\draw [thick, bend left=45] (21.center) to (20.center);
			\draw [thick, bend right=45] (21.center) to (22.center);
			\draw [thick] (3.center) to (5.center);
			\draw [thick] (7.center) to (9.center);
			\draw [thick] (15.center) to (17.center);
			\draw [thick] (19.center) to (21.center);
		\end{pgfonlayer}
	\end{tikzpicture}
	\caption{Two curves with isomorphic Jacobians, however the left one is hyperelliptic and the right one isn't.}
	\label{hyperelliptic_nonhyperelliptic}
\end{figure}

Recently, C. Ritter has shown \cite[Theorem A.]{ceresa_period_tropical} that, if $C$ is any genus $g$ tropical curve, the $\Omega^g_{2,1}$-flux obstruction for its Ceresa cycle $C-C^-$ vanishes if and only if $C$ is of hyperelliptic type. In particular, this implies that the symplectic flux vanishes for the Lagrangian Ceresa cycles associated to curves of hyperelliptic type. On the other hand, given a nullhomologous Lagrangian $L$ in $M$ with \enquote{zero flux}, i.e. whose image $\Theta(L)$ in the Lefschetz Jacobian $LJ(M)$ introduced in Section \ref{Lefschetz_jacobian} is zero, there is no easy way of 

In fact, in cases where mirror symmetry holds, one would expect rational equivalence to be mirror to \textit{unobstructed} (cylindrical) Lagrangian cobordism. We note that while flux detects when Lagrangians are not \textit{oriented} nullcobordant, it may not detect when Lagrangians are oriented cobordant, but not \textit{unobstructed} cobordant. 

Classically in algebraic geometry, for a nullhomologous cycle to have \enquote{zero flux} is manifested in the fact that its image in the corresponding intermediate Jacobian is zero. However it is an open question whether 
 However, the mirror correspondence between rational equivalence and Lagrangian cobordisms suggests that they are \textit{not} generally nullcobordant. These limitations of flux as an obstruction to Lagrangian cobordisms could already be noticed from the fact that it does not allow us to detect Lagrangians which are oriented cobordant, but not \textit{unobstructed} cobordant.

\subsection{Flux and characters of symplectic Torelli-type groups}

A concept closely related to our notion of symplectic flux appeared in a paper by Reznikov \cite[Section 4]{Reznikov_characteristic_classes_in_symplectic_topology}, where it is used to construct characters of a symplectic Torelli-type group $\Symp_{0}(M)/\ham(M)$. Here $\Symp_{0}(M)$ is the kernel of the map $$\Symp(M)\longrightarrow \Aut(H_{odd}(M;\Z)),$$ and $\ham(M)\subset \Symp(M)$ is the subgroup of Hamiltonian isotopies of $M$. 

Given $f\in\Symp_{0}(M)$ and $z\in H_{2k-1}(M;\Z)$, define $$\chi(f,z):=\int_b\w^k\mod\Z,$$ with $b$ is a $2k$-chain satisfying $\partial b=c-f(c)$, and $c$ a chain representing $z$. This yields a well-defined map \cite[Theorem 4.1]{Reznikov_characteristic_classes_in_symplectic_topology}:
\begin{align}
	\chi:\Symp_{0}(M)/\ham(M)\longrightarrow\hom(H_{odd}(M;\Z),\R/\left([\w^k]\cdot H_{2k}(M;\Z)\right). \notag
\end{align}

Now let $M$ be a symplectic manifold of dimension $2(2k-1)$, and fix a class $z\in H_{2k-1}(M;\Z)$ which admits a Lagrangian representative $L$. Then $$\chi(\cdot, z):\Symp_{0}(M)/\ham(M)\longrightarrow\R/\left([\w^k]\cdot H_{2k}(M;\Z)\right)$$ fits into a commutative diagram

\begin{tikzcd}
	\Symp_{0}(M)/\ham(M) \arrow[rrdd, "{\chi( \cdot ,z)}"'] \arrow[rrrr, "\varphi\mapsto L-\varphi(L)"] &  &                                           &  & Cob^{or}(M)_{hom} \arrow[lldd, "\w^k- flux"] \\
	&  &                                           &  &                                                          \\
	&  & \R/\left([\w^k]\cdot H_{2k}(M;\Z)\right). &  &                                                         
\end{tikzcd}

From this perspective, Theorem \ref{symplectic_Ceresa} can be translated into a non-triviality result for Reznikov's character in $X(J(C))$. Indeed, recall that $L^-=\varphi^-(L)$ for some global symplectomorphism $\varphi^-$ which is in $\Symp_{hom}(X(J(C)))$. However, it is important to notice that in general the horizontal map from $\Symp_{hom}(M)/\ham(M)$ to $Cob^{or}(M)_{hom}$ is \textit{not} a group homomorphism, but a \textit{crossed} homomorphism satisfying $$\varphi_2\circ\varphi_1\mapsto (L-\varphi_1(L))+(\varphi_1(L)-\varphi_2\circ\varphi_1(L)).$$ In particular, in the case of the Ceresa cycle, $\varphi^-$ is of order two, and is mapped to $L-L^-$ of infinite order in $\Cob^{or}(X(J(C)))_{hom}$.

	\hspace{3mm}
	
\printbibliography
\nocite{nick_ivan_K3_mirror_symmetry,nick_ivan_K3_rational_equivalence,Abelian_varieties_book}
\nocite{Clemens_Griffiths_infinitely_generated,Mak_Ruddat_tropical_Lag_quintic_threefolds,[zotero]ad}
\nocite{[zotero]g,fulton_intersection_theory,jeff_lag_cob_wall_crossing,fukaya_family_floer_progress_report}

\end{document}